\documentclass[12pt]{amsart}

\usepackage{tikz}
\usetikzlibrary{snakes}
\usepackage[colorlinks=true, linkcolor=blue, anchorcolor=blue, citecolor=blue, filecolor=blue, menucolor= blue, urlcolor=blue]{hyperref}
\usepackage{amsmath,amssymb,amsthm,amscd}
\usepackage{verbatim}
\usepackage{comment}
\usepackage{multirow}
\usepackage{mathtools}
\usepackage{enumitem}
\usepackage{pgf,tikz}
\usetikzlibrary{arrows}
\usepackage[normalem]{ulem}
\usepackage{graphicx}
\usepackage{amssymb}
\usepackage{wasysym}

\usepackage{arydshln}
\usepackage{faktor}
\usepackage{xfrac}  

\usepackage[margin=1in]{geometry} 
\usepackage{pdfpages}

\numberwithin{equation}{section}

\newtheorem{theorem}{Theorem}[section]
\newtheorem{proposition}[theorem]{Proposition}
\newtheorem{lemma}[theorem]{Lemma}
\newtheorem{corollary}[theorem]{Corollary}

\newtheorem{conjecture}[theorem]{Conjecture}
\newtheorem*{theorem*}{Theorem}

\theoremstyle{definition}
\newtheorem{definition}[theorem]{Definition}
\newtheorem{notation}[theorem]{Notation}
\newtheorem{example}[theorem]{Example}
\newtheorem{remark}[theorem]{Remark}


\newcommand{\ZZ}{ \ensuremath{\mathbb{Z}}}

\newcommand{\PP}{ \ensuremath{\mathbb{P}}}

\newcommand{\gin}{\ensuremath{\mathrm{gin}}}

\newcommand{\adim}{\ensuremath{\mathrm{adim}}}
\newcommand{\edim}{\ensuremath{\mathrm{edim}}}
\newcommand{\vdim}{\ensuremath{\mathrm{vdim}}}

\DeclareMathOperator{\coker}{coker}

\newcommand{\init}{\ensuremath{\mathrm{in}}\hspace{1pt}}

\definecolor{MyDarkGreen}{cmyk}{0.7,0,1,0}

\def\cocoa{{\hbox{C\kern-.13em o\kern-.07em C\kern-.13em o\kern-.15em A}}}


\begin{document}

\title[Expecting the unexpected]{Expecting the unexpected: quantifying the persistence of unexpected hypersurfaces}

\author{G.\ Favacchio}
\address{Dipartimento di Matematica e Informatica\\
Universit\`a di Catania\\
Viale A. Doria, 6\\
I-95125 Catania, Italy}
\email{favacchio@dmi.unict.it}

\author{E.\ Guardo}
\address{Dipartimento di Matematica e Informatica\\
Universit\`a di Catania\\
Viale A. Doria, 6\\
I-95125 Catania, Italy}
\email{guardo@dmi.unict.it}

\author{B.\ Harbourne}
\address{Department of Mathematics\\
University of Nebraska\\
Lincoln, NE 68588-0130 USA}
\email{bharbourne1@unl.edu}

\author{J.\ Migliore} 
\address{Department of Mathematics \\
University of Notre Dame \\
Notre Dame, IN 46556 USA}
\email{migliore.1@nd.edu}

\begin{abstract}
If $X \subset \PP^n$ is a reduced subscheme, we say that $X$ admits an unexpected hypersurface of 
degree $t$ for multiplicity $m$ if the imposition of having multiplicity $m$ at a general point $P$ fails to impose 
the expected number of conditions on the linear system of hypersurfaces of degree $t$ containing $X$. 
Conditions which either guarantee the occurrence of unexpected hypersurfaces, or which ensure that they cannot occur,
are not well understand. 
We introduce new methods for studying unexpectedness, such as the use of generic 
initial ideals and partial elimination ideals to clarify when it can and when it cannot occur.
We also exhibit algebraic and geometric properties of $X$ which
in some cases guarantee and in other cases preclude $X$ having
certain kinds of unexpectedness. In addition,
we formulate a new way of quantifying unexpectedness (our AV sequence),
which allows us detect the extent 
to which unexpectedness persists as $t$ increases but $t-m$ remains constant.
Finally, we study to what extent we can detect unexpectedness from the Hilbert function of X.
\end{abstract}

\date{edited: January 28, 2020; compiled \today}

\thanks{
{\bf Acknowledgements}: Favacchio and Guardo were partially supported by the Universit\`a degli Studi di
Catania, ``Piano della Ricerca 2016/2018 Linea di intervento 2" and  by the ``National 
Group for Algebraic and Geometric Structures, and their Applications" (GNSAGA of INdAM).  Harbourne was 
partially supported by Simons Foundation grant \#524858.
Migliore was partially supported by Simons Foundation grant \#309556.
}

\keywords{unexpected hypersurface, unexpected curve, Hilbert function, generic initial ideal, partial elimination ideal, complete intersection, AV-sequence, SHGH, $O$-sequences,  SI-sequences, artinian reductions, degenerate varieties, base conditions, cones}

\subjclass[2010]{Primary: 14C20, 13D40, 14Q10, 14M10;
Secondary: 14M05, 14M07, 13E10}

\maketitle



\section{Introduction}

A classical kind of problem in algebraic geometry is to consider vanishing conditions on a linear system $\mathcal L$, 
and to ask if the dimension of the resulting linear system is what one would expect based on the dimension of $\mathcal L$ 
and the specific conditions imposed. That is, one asks if the desired vanishing imposes the {\em expected number} of 
conditions on $\mathcal L$. For example, if $\mathcal L$ is the complete linear system of conics in $\PP^2$ 
(which is 5-dimensional) and $P$ is a  point, then vanishing to multiplicity 2 at $P$ imposes three conditions on $\mathcal L$; 
that is, there is a 2-dimensional linear system of conics double at $P$ as expected. However, if we impose vanishing to 
multiplicity 2 at each of two points $P_1$ and $P_2$, we expect $3+3 = 6$ conditions, i.e. we expect there to be no such 
conic, while in fact the double line passing through $P_1$ and $P_2$ is such a conic. Continuing in this direction leads 
to the well-known Segre-Harbourne-Gimigliano-Hirschowitz (SHGH) Conjecture \cite{segre,Harb,Gi,Hi}. 
A conjecture of Laface and Ugaglia addresses the corresponding situation in $\PP^3$ \cite{LU}, while results of 
Alexander and Hirschowitz \cite{AH} partially address the situation in $\PP^n$ for all $n\geq2$, but much remains unknown.

It is not only vanishing conditions imposed by points that is of interest.
Given a general set of $r$ lines in $\PP^n$, one could ask if vanishing on all of these lines with multiplicity 1 
imposes the expected number of conditions on the complete linear system of hypersurfaces of given degree $d$, 
and an affirmative answer was given by Hartshorne and Hirschowitz \cite{HaHi}. Their paper led to much other 
work, such as research in which lines of higher multiplicity are allowed (for just two recent examples, see \cite{DHRST} and \cite{BDSSS}). The work of Hartshorne and Hirschowitz also led to the paper \cite{CCG}, which can be viewed as a direct precursor to the study of unexpected hypersurfaces.

Thus in recent years, a flurry of activity has emerged on this kind of problem. Some of it grew out of a striking 
example in \cite{DIV}, later formalized in \cite{CHMN} and since then branching off in many different directions 
(for some examples, see \cite{CM, DMO, DHRST, FGST, HaHa, HMNT, HMT, S1, S2, Tr}). The path that led to 
this paper began 
by our looking for conditions on a variety $X$ that either automatically force the existence of unexpected hypersurfaces, 
or else force the conclusion that no unexpected hypersurfaces exist. The results we describe below give examples of 
such conditions. But more specifically we can ask: are there conditions on the Hilbert function that force either of 
these outcomes? It turns out that if the Hilbert function forces $X$ to be degenerate in $\PP^n$ then $X$ 
does not admit any unexpected hypersurfaces of any kind in $\PP^n$ (see Corollary \ref{degenerate}), 
regardless of whether it does in the smallest linear space containing it. Beyond that, additional conditions involving the 
geometry of $X$ seem to be involved. Indeed, with a 
minor assumption on the geometry of $X$, there are such Hilbert functions (see Theorem \ref{force unexp}),
but in the setting of non-degenerate, finite sets of points, we conjecture that there 
are no Hilbert functions that force any kind of unexpectedness (see Conjecture \ref{conj about existence}).

We now describe our results in more detail.
Given a subscheme $X$ of $\PP^n_K$, 
its defining saturated homogeneous ideal $I_X\subseteq R=K[\PP^n_K]=K[x_0,\ldots,x_n]$ (where $K$ is a field)
and integers $t\ge m\ge 1$, we define three numbers associated 
to $(X,t,m)$ (see Notation \ref{dim not}). The {\em actual dimension}, ${\adim}(X,t,m)$, is the dimension 
of the vector space of the forms in $I_X$ of degree $t$ vanishing at a general point $P$ with multiplicity $m$. That is,
\[
{\adim}(X,t,m) = \dim [I_X \cap I_P^m]_t.
\]
Next, the {\em virtual dimension}, ${\vdim}(X,t,m)$,  is the dimension of the linear system of the forms of 
degree $t$ in $I_X$  minus the expected number of conditions imposed by taking $P$ with multiplicity $m$. That is,
\[
{\vdim}(X,t,m) = \dim [I_X]_t - \binom{m-1+n}{n}.
\] 
Finally, the {\em expected dimension} ${\edim}(X,t,m)$ is the maximum of ${\vdim}(X,t,m)$ and 0.

Of course, ${\adim}(X,t,m) \ge {\edim}(X,t,m) \geq {\vdim}(X,t,m)$. We say 
that $X$ {\em admits an unexpected hypersurface of degree $t$ vanishing at a general point $P$ with multiplicity $m$} when 
${\adim}(X,t,m)>{\edim}(X,t,m)$, i.e., when
${\adim}(X,t,m)>0$ and ${\adim}(X,t,m)>{\vdim}(X,t,m)$. 

The purpose of this paper is to get a better feel for when unexpected hypersurfaces are forced to occur 
(hence ``expecting the unexpected"). We relate unexpectedness to algebraic and geometric properties (see for example 
Proposition \ref{p.lex segments are bad for unexpectedness} for the former and 
Corollary \ref{degenerate} and Theorem \ref{uct} for the latter)
and we bring to bear methods not previously applied to unexpectedness (such as the use of generic 
initial ideals and partial elimination ideals) to clarify when unexpectedness can and when it cannot be 
expected. When it can, we also study to what extent
it can, which we do by introducing AV sequences 
measuring the gap between the actual dimension and the virtual dimension (see Definition \ref{def of AV}),
which also allows us to frame our work in terms of persistence (i.e., how long does the gap remain
positive?).

In Section \ref{s.Background} we define these AV sequences. 
Specifically, for a given  subscheme $X$ of $\PP^n$ and a non-negative integer $j\ge 0$, we define 
$AV_{X,j}: \ZZ_{>0}\to \ZZ_{\geq0}$ where
$AV_{X,j}(m):={\adim}(X,m+j,m)-{\vdim}(X,m+j,m)$. Studying the difference ${\adim} - {\vdim}$ is 
not new (see for instance \cite{HMT}), but considering it as a sequence is novel. This sequence has interesting 
properties and leads to compact formulas. 
Sometimes we will consider the general case, but even the cases $j=0$ and $j=1$ are interesting 
(see Proposition \ref{AV_X,0(alpha)} and Sections \ref{s.irreducible ACM curve in P3} -- \ref{s.Unmixed curves and unions with finite sets in P3}).

In Section \ref{s. gin and unexp}, for any subscheme $X$ and integer $j\ge 0$, we prove (Theorem \ref{t. AV is an O-sequence})  
that the sequence $AV_{X,j}$ is actually an $O$-sequence, up to a shift. Indeed, it is the 
Hilbert function of the $K$-algebra $R/(\gin(I_X):x^{j+1})$, where $\gin(I_X)$ denotes the 
generic initial ideal with respect to the lexicographic order.
We use this fact to obtain results which ensure the non-existence of unexpected hypersurfaces.  
In particular, if $X$ lies on a hyperplane or if $\gin(I_X)$ is a lex-segment ideal then $X$ does not admit {\em any} unexpected hypersurfaces of any type (Corollary \ref{degenerate} and Proposition \ref{p.lex segments are bad for unexpectedness}), and hence 
${\adim}(X,t,m)$ {\it always} has the expected value.

The fact that the  AV sequence is actually an $O$-sequence raises many other related questions. 
In Section \ref{s.irreducible ACM curve in P3} we use a cohomological interpretation of the AV sequence, 
described in Remark \ref{cohom interp}, to prove that if $X$ is an irreducible, arithmetically Cohen-Macaulay 
(ACM) curve in $\PP^3$, then $AV_{X,1}(m)$ is unimodal and the first part is differentiable 
(see Theorem \ref{part of conj}). Moreover, we have a great deal of experimental evidence which
suggests that if $X$ is smooth then this sequence is also finite and symmetric (see Conjecture \ref{SI conj}).

In Section \ref{ci section} we apply a new method to study the question of unexpected hypersurfaces, 
namely the theory of {\em partial elimination ideals} introduced by Green \cite{Gr}. We consider the 
case of a general codimension 2 complete intersection $C\subset \PP^n$ and we show the 
non-vanishing of $[I_C \cap I_P^m]_t$ for prescribed values of $t$ and $m$ (Proposition \ref{p. codimension 2 complete intersection}). 
We apply this to the case of $n=3$ to show for $t = (a-1)(b-1) + 1$ and $m = (a-1)(b-1)$, that a 
general complete intersection curve $C$ of type $(a,b)$ with 
$2<a\leq b$, admits an unexpected hypersurface of 
degree $t$ for multiplicity $m$ (Proposition \ref{unexpected CI}).

In Section \ref{s.Unmixed curves and unions with finite sets in P3}, in the case of 
either a reduced equidimensional curve  $C$, or the  disjoint union of  a finite set of points and a curve $C$, 
we describe how the $AV$-sequence depends only on geometric information (see Theorems \ref{uct} and \ref{t.C U X}). 

In Section \ref{s.Finite sets of points in P3} we study how knowledge of the Hilbert function, together 
with certain geometric assumptions, can provide information about  unexpected hypersurfaces. We restrict our 
attention to the case of subvarieties of $\PP^3$, but we expect much more can be said in the 
general situation. We use results from \cite{BGM}, where maximal growth of the $h$-vector of a set 
of points $X$ forces the existence of a suitable curve $C$ in the base locus of some component of $I_X$. 
Then we apply some of the results of Section \ref{s.Unmixed curves and unions with finite sets in P3} to 
produce Theorem \ref{force unexp}, already described above. We include several examples to show the 
range of things that can happen for sets with the same Hilbert function.


\section{Background}\label{s.Background} 
We now introduce the main definitions and notation we need, and
begin an investigation of $AV$ sequences. 
We begin by recalling the notion of the $d$-Macaulay representation of a positive integer. The notion goes back to 
Macaulay \cite{Mac} or earlier; see \cite{BH} for an excellent exposition.

\begin{lemma}[\cite{BH} Lemma 4.2.6 and page 161]
Let $d$ be a positive integer. Any $a \in \ZZ_{\geq0}$ can be written uniquely in the form
\[
a = \binom{k_d}{d} + \binom{k_{d-1}}{d-1} + \dots + \binom{k_j}{j}
\]
where $k_d > k_{d-1} > \dots > k_j \geq j \geq 1$ are integers.
\end{lemma}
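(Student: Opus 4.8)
The plan is to prove existence by a greedy construction and uniqueness by a sharp upper bound on the ``tail'' of any valid representation, both organized as inductions on $d$; the case $a=0$ is the empty sum, so I assume $a\ge 1$ from now on. The one genuinely technical ingredient — and the step I expect to be the main obstacle — is the \emph{tail inequality}: whenever $k_d>k_{d-1}>\dots>k_j\ge j\ge 1$,
\[ \binom{k_d}{d}+\binom{k_{d-1}}{d-1}+\dots+\binom{k_j}{j} < \binom{k_d+1}{d}. \]
Granting this, the leading term of any representation is squeezed by $\binom{k_d}{d}\le a<\binom{k_d+1}{d}$ (the lower bound being trivial), hence $k_d$ is determined by $a$ alone, and the rest follows by stripping it off and recursing.

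To prove the tail inequality I would iterate Pascal's rule on $\binom{k_d}{d-1}$, obtaining
\[ \binom{k_d}{d-1} = \binom{k_d-1}{d-1}+\binom{k_d-2}{d-2}+\dots+\binom{k_d-(d-j)}{j}+\binom{k_d-(d-j)}{j-1}, \]
whose last summand is positive (it is at least $\binom{j}{j-1}=j$). Discarding it and using $k_{d-i}\le k_d-i$ together with monotonicity of $x\mapsto\binom{x}{r}$ on $x\ge r$ yields $\binom{k_{d-1}}{d-1}+\dots+\binom{k_j}{j}<\binom{k_d}{d-1}$; adding $\binom{k_d}{d}$ to both sides and using $\binom{k_d+1}{d}=\binom{k_d}{d}+\binom{k_d}{d-1}$ gives the claim (the case $j=d$, with empty tail, being immediate).

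For existence, I induct on $d$: for $d=1$ take $k_1=a$; for $d\ge 2$ let $k_d$ be the largest integer with $\binom{k_d}{d}\le a$ (this exists and $k_d\ge d$ since $\binom{d}{d}=1\le a$), and set $a'=a-\binom{k_d}{d}$, which by maximality of $k_d$ and Pascal's rule satisfies $0\le a'<\binom{k_d}{d-1}$. If $a'=0$ we stop with $j=d$; otherwise the inductive hypothesis gives $a'=\binom{k_{d-1}}{d-1}+\dots+\binom{k_j}{j}$ with $k_{d-1}>\dots>k_j\ge j\ge 1$, and since $\binom{k_{d-1}}{d-1}\le a'<\binom{k_d}{d-1}$ with $x\mapsto\binom{x}{d-1}$ strictly increasing for $x\ge d-1$, we get $k_{d-1}<k_d$, so prepending $\binom{k_d}{d}$ yields a valid representation of $a$.

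For uniqueness, the tail inequality applied to an arbitrary valid representation forces $k_d$ to be exactly this same largest integer with $\binom{k_d}{d}\le a$, independent of the representation; subtracting $\binom{k_d}{d}$ reduces to a representation of $a-\binom{k_d}{d}$ in degree $d-1$, and induction on $d$ (base case $d=1$, where $k_1=a$ is forced) shows the remaining terms agree as well. The only delicate point throughout is checking at each stage that the leading index of the lower-degree piece stays strictly below $k_d$, which is precisely what the tail inequality and the bound $a'<\binom{k_d}{d-1}$ were arranged to guarantee; so I regard that inequality as the crux and everything else as routine bookkeeping.
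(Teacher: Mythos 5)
Your proof is correct: the tail inequality $\binom{k_d}{d}+\dots+\binom{k_j}{j}<\binom{k_d+1}{d}$ is established soundly (the iterated Pascal expansion and the bound $k_{d-i}\le k_d-i$ both check out, and $k_d\ge d$ guarantees the discarded term $\binom{k_d-(d-j)}{j-1}$ is positive), and the greedy existence argument together with the forced choice of $k_d$ gives uniqueness. The paper does not prove this lemma -- it cites it from Bruns--Herzog -- and your argument is essentially the standard textbook proof of the $d$-Macaulay representation, so there is nothing further to reconcile.
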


\noindent Given the above $d$-Macaulay representation of $a$, we define
\[
a^{\langle d \rangle} = \binom{k_d+1}{d+1} + \binom{k_{d-1}+1}{d} + \dots + \binom{k_j+1}{j+1}
\] 
and set $0^{\langle d \rangle} = 0$.

\begin{theorem}[\cite{BH} Theorem 4.2.10] \label{macaulay thm}
Let $K$ be a field and let $h : \ZZ_{\geq0} \rightarrow \ZZ_{\geq0}$ be a numerical function. 
Then $h$ is the Hilbert function of some standard graded $K$-algebra if and only if
\[
h(0) = 1 \ \ \ \hbox{ and } \ \ \ h(d+1) \leq h(d)^{\langle d \rangle}
\]
for all $d \geq 1$.
\end{theorem}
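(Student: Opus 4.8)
The plan is to prove both implications, using lex-segment ideals as the bridge between numerical functions and genuine Hilbert functions.

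For the \emph{sufficiency} direction, assume $h(0)=1$ and $h(d+1)\le h(d)^{\langle d\rangle}$ for all $d\ge 1$. Set $R=K[x_1,\dots,x_N]$ with $N=h(1)$ (if $h(1)=0$, take the algebra to be $K$ and stop). For each $d\ge 1$ let $L_d\subseteq R_d$ be the span of the $\dim_K R_d-h(d)$ largest monomials of degree $d$ in the lexicographic order, and put $L=\bigoplus_{d\ge 1}L_d$. By construction $A=R/L$ has $\dim_K A_d=h(d)$ for every $d$, so the only thing to verify is that $L$ is an ideal, i.e.\ $R_1L_d\subseteq L_{d+1}$. This rests on the combinatorial fact (Macaulay's lemma on the growth of lex segments): if $S$ is the lex segment of the $a$ largest monomials of degree $d$, then the smallest lex segment in degree $d+1$ containing $R_1S$ has exactly $a^{\langle d\rangle}$ monomials. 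Applying this with $a=\dim_K R_d-h(d)=\dim_K L_d$ shows $R_1L_d$ lies in the span of the $(\dim_K R_d-h(d))^{\langle d\rangle}$ largest monomials of degree $d+1$, and a short binomial computation from the $d$-Macaulay representation gives $(\dim_K R_d-h(d))^{\langle d\rangle}\le \dim_K R_{d+1}-h(d)^{\langle d\rangle}\le \dim_K R_{d+1}-h(d+1)=\dim_K L_{d+1}$. Hence $L$ is an ideal and $A$ is the desired standard graded algebra.

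For the \emph{necessity} direction, let $A=R/I$ be standard graded with $R=K[x_1,\dots,x_N]$, $I$ homogeneous and $A_0=K$; write $h=h_A$. Fix any term order $\prec$ and pass to the monomial initial ideal $\init_\prec(I)$, which has the same Hilbert function; thus it suffices to prove the bound for a monomial ideal $J$. For such a $J$ the monomials outside $J$ form an order ideal $M_\bullet$, and the crux is the Macaulay--Kruskal--Katona-type estimate that among all order ideals with a prescribed number of monomials in degree $d$, the lexicographic one has the largest number of monomials in degree $d+1$, this maximum being exactly $h(d)^{\langle d\rangle}$. I would establish the extremality by \textbf{compression}: repeatedly replace $J$ (equivalently $M_\bullet$) by a more compressed monomial ideal, one variable at a time, in a way that keeps $\dim_K(R/J)_d$ fixed and does not decrease $\dim_K(R/J)_{d+1}$, until the ideal is a lex segment in the relevant degrees, where $h(d)^{\langle d\rangle}$ is read off directly from the $d$-Macaulay representation (the same bookkeeping as in the first part). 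Combining these steps yields $h(d+1)\le h(d)^{\langle d\rangle}$.

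The main obstacle is precisely this compression step: one must pin down the correct notion of compression in each variable, verify it preserves the degree-$d$ dimension while not losing degree-$(d+1)$ dimension, and check that the process terminates at a lex segment. This is the technical heart of Macaulay's theorem, parallel to but somewhat more delicate than the Kruskal--Katona argument; everything else (passing to an initial ideal, manipulating binomial coefficients) is routine. An alternative is to reduce, over an infinite field, to generic initial and hence Borel-fixed ideals and exploit the explicit structure of strongly stable ideals in characteristic zero, but since that route needs extra care in positive characteristic I would favor the uniform monomial-ideal compression argument.
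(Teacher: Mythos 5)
The paper itself gives no proof of this statement: it is Macaulay's classical theorem, quoted with attribution to \cite{BH}, Theorem 4.2.10, so the only thing to assess is your outline on its own terms. Your overall strategy (a lex-segment ideal for sufficiency; an initial ideal plus a compression argument for necessity) is the standard route, but the sufficiency half contains a genuine error: you apply the operator ${}^{\langle d\rangle}$ to $a=\dim_K L_d$, the size of the lex segment on the ideal side, and claim that $R_1L_d$ lies in the span of the $a^{\langle d\rangle}$ largest monomials of degree $d+1$. This is false. In $K[x_1,x_2,x_3]$ take $h(2)=5$, so $L_2$ is spanned by $x_1^2$ and $a=1$; then $a^{\langle 2\rangle}=\binom{3}{3}=1$, while $R_1L_2$ spans the three monomials $x_1^3$, $x_1^2x_2$, $x_1^2x_3$. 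Macaulay's growth lemma is a statement about the quotient side: if $L_d$ is the lex segment with $\dim_K(R_d/L_d)=h(d)$, then $R_1L_d$ is again a lex segment and $\dim_K(R_{d+1}/\langle R_1L_d\rangle)$ equals exactly $h(d)^{\langle d\rangle}$ (in the example, $10-3=7=5^{\langle 2\rangle}$). With the lemma stated correctly the construction closes more simply than your binomial bookkeeping suggests: $R_1L_d$ is a lex segment of size $\dim_K R_{d+1}-h(d)^{\langle d\rangle}\le \dim_K R_{d+1}-h(d+1)=\dim_K L_{d+1}$, and since lex segments of a fixed degree are totally ordered by inclusion, $R_1L_d\subseteq L_{d+1}$; the intermediate inequality $(\dim_K R_d-h(d))^{\langle d\rangle}\le \dim_K R_{d+1}-h(d)^{\langle d\rangle}$ is neither the relevant bound nor needed.

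For necessity, reducing to a monomial ideal by passing to an initial ideal (which preserves the Hilbert function) is fine, but the extremality statement---that among monomial ideals $J$ with $\dim_K(R/J)_d=h(d)$ the lex one maximizes $\dim_K(R/J)_{d+1}$, the maximum being $h(d)^{\langle d\rangle}$---is the entire content of Macaulay's theorem, and you leave it as a plan (``the technical heart'') rather than a proof. As written, the forward implication is therefore not established: you would still need to define the compression operators, verify that each keeps an ideal, preserves $\dim_K(R/J)_d$ and does not decrease $\dim_K(R/J)_{d+1}$, and prove termination at a lex segment, or else simply cite \cite{BH} as the paper does.
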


\noindent An infinite sequence $a_0,a_1,a_2,\ldots$ of non-negative integers with $a_i=h(i)$ for an $h$ 
satisfying the conditions of 
Theorem~\ref{macaulay thm} is called an {\em $O$-sequence}. 
We will regard a finite sequence $a_0,a_1,a_2,\ldots, a_r$ as an infinite sequence
by setting $a_i=0$ for $i>r$.

Hereafter, let $R = K[x_0,\dots, x_n]$ be a polynomial ring over a field $K$.
Our default assumption will be that $K$ is algebraically closed,
but sometimes we will need the characteristic to be zero, 
and sometimes we will need only that $K$ is infinite;
in these cases we will say so explicitly.
  
\begin{notation}
For any subvariety (or subscheme) $V \subseteq \PP^n$ we write $I_V \subseteq R$ for the saturated ideal of $V$
and $\mathcal{I}_V$ for the sheaf on $\PP^n$ corresponding to $I_V$. For a standard graded algebra $R/I$ 
we write $h_{R/I}(t)$ for the {\em Hilbert function} of $R/I$, i.e. $h_{R/I}(t) = \dim_K [R/I]_t$. When $I = I_V$ for 
some subscheme $V$, we sometimes write $h_V(t)$ for  $h_{R/I_V}(t)$. We say that $V$ is {\em arithmetically 
Cohen-Macaulay (ACM)} if $R/I_V$ is a Cohen-Macaulay ring.

For any integer function $h : \ZZ_{\geq 0} \to \ZZ$, the first difference $\Delta h$ is the backward difference,
defined by setting $\Delta h(0) = h(0)$ and $\Delta h(t) = h(t)-h(t-1)$ for $t>0$. 
When $X$ is a finite set of points, it is well-known and easy to see that
$h_X$ is strictly increasing until it becomes constant, hence there is a $j$ such that
$1=h_X(0)<\cdots<h_X(j)=h_X(j+1)=\cdots$. 
We refer to $(\Delta h_X(0),\ldots,\Delta h_X(j))$ as 
the {\em $h$-vector} of $X$; it is known to be a finite $O$-sequence
(see, e.g., \cite[Proposition 6.3]{CH}).
\end{notation}


\begin{notation} \label{dim not}
Let $t\ge m$ be positive integers and let $P \in \PP^n$ be a general point. Given $X \subset \PP^n$ a subscheme, 
we set 
\[
\begin{array}{llllll}
{\adim} (X,t,m) \ =\ {\adim} (I_X,t,m) \ =& \displaystyle \dim [I_X \cap I_P^m]_t & \hbox{ (the actual dimension)}, \\
{\vdim} (X,t,m) \ =\ {\vdim} (I_X,t,m) \ =& \displaystyle \dim [I_X]_t - \binom{m+n-1}{n} & \hbox{ (the virtual dimension)},  \\
{\edim} (X,t,m) \ =\ {\edim} (I_X,t,m) \ =& \displaystyle \max \{ 0, {\vdim}(X,t,m) \} & \hbox{ (the expected dimension)}. 
\end{array}
\]
\end{notation}

Note that we always have 
\[
{\adim}(X,t,m) \geq {\edim}(X,t,m) \geq {\vdim}(X,t,m).
\]

\begin{definition}\label{unexpDef}
If ${\adim} (X,t,m) > {\edim}(X,t,m)$, we say that $X$ {\em admits an  unexpected 
hypersurface of degree $t$ for multiplicity $m$}. (In this case, note that ${\adim} (X,t,m)>0$,
hence $t\geq m$.) If $X \subset \PP^2$ is a finite set of 
points which admits an  unexpected hypersurface of degree $t$ for multiplicity $m=t-1$,
then following \cite{CHMN} we say simply that {\em $X$ admits an unexpected curve of degree $t$}.
\end{definition}

\begin{remark} \label{equiv to unexp}
An equivalent condition for $X$ to admit an unexpected hypersurface of degree $t$ for 
multiplicity $m$ is ${\adim}(X,t,m) > 0$ and ${\adim}(X,t,m) > {\vdim}(X,t,m)$.
\end{remark}

\begin{remark}
Any hypersurface of degree $t$ with an isolated singularity of multiplicity $t$ must be a 
cone (by Bezout's theorem). Thus ${\adim}(X,t,t)$ is the dimension of the vector space 
of cones over $X$ of degree $t$ with vertex at~$P$. If ${\adim} (X,t,t) > {\edim}(X,t,t)$,  
we say that $X$ {\em admits an unexpected cone of degree $t$}. See \cite{HMNT, CM, HMT} for 
more on unexpected cones. In particular, if $X$ has codimension two and is reduced, 
equidimensional and non-degenerate then the cone  $S_P$ over $X$ with vertex $P$ is 
an unexpected cone of degree $t=\deg X$ (\cite{HMNT} Proposition 2.4).
\end{remark}

\begin{definition} \label{def of AV}
Let $X \subset \PP^n$ be a closed subscheme. Fixing a non-negative integer $j$, 
we define the sequence $AV_{X,j}$ as follows:
\[
AV_{X,j}(m) = 
\begin{array}{cc}
{\adim}(X,m+j,m) - {\vdim}(X,m+j,m), & m\ge 1.
\end{array}
\] 
\end{definition}

\begin{remark}
Rephrasing Remark \ref{equiv to unexp}, if ${\adim} (X,t,m) > 0$ then $X$ admits 
an unexpected hypersurface of degree $t$ for multiplicity $m$ if and only if $AV_{X,j}(m) > 0$ for $j = t-m$.

\end{remark}

\begin{notation} \label{Pm vs mP}
Let $P \in \PP^n$ be a general point, with defining ideal $I_P$. We will 
denote the scheme defined by $I_P^m$ in $\PP^n$ by $P^m$. We will 
sometimes consider the hyperplane section of $P^m$ by a hyperplane $H$ 
containing $P$, and we will denote the corresponding subscheme of  $H$ by $mP$, thus $mP=P^m\cap H$.
\end{notation}

We now give an interpretation of the sequence $AV_{X,j}$. Notice that, in the 
following lemma, the ideal in the dimension of the quotient on the right changes with $m$. 

\begin{lemma}\label{l.AV seq 1} 
Let $X \subset \PP^n$ be a subscheme.  Then
	\[
	AV_{X,j}(m) = \dim\left[  \faktor{R}{(I_X + I_P^m)}\right]_{m+j}  .
	\] 
\end{lemma}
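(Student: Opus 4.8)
The plan is to compute $AV_{X,j}(m) = \adim(X,m+j,m) - \vdim(X,m+j,m)$ directly from the definitions by relating the space $[I_X \cap I_P^m]_{m+j}$ to the quotient $[R/(I_X + I_P^m)]_{m+j}$ via a short exact sequence of graded vector spaces in degree $t := m+j$.

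First I would write down the obvious exact sequence of graded $R$-modules
\[
0 \to I_X \cap I_P^m \to I_X \to \faktor{(I_X + I_P^m)}{I_P^m} \to 0,
\]
which comes from the second isomorphism theorem, since $I_X / (I_X \cap I_P^m) \cong (I_X + I_P^m)/I_P^m$. Taking degree-$t$ pieces (exactness is preserved since we work over a field) gives
\[
\dim[I_X \cap I_P^m]_t = \dim[I_X]_t - \dim\left[\faktor{(I_X+I_P^m)}{I_P^m}\right]_t.
\]
Next I would use a second short exact sequence
\[
0 \to \faktor{(I_X+I_P^m)}{I_P^m} \to \faktor{R}{I_P^m} \to \faktor{R}{(I_X+I_P^m)} \to 0,
\]
so that in degree $t$,
\[
\dim\left[\faktor{(I_X+I_P^m)}{I_P^m}\right]_t = \dim\left[\faktor{R}{I_P^m}\right]_t - \dim\left[\faktor{R}{(I_X+I_P^m)}\right]_t.
\]
The key numerical input is that $\dim[R/I_P^m]_t = \binom{m+n-1}{n}$ whenever $t \geq m$: since $P$ is a point, $R/I_P^m$ is the coordinate ring of a fat point scheme whose Hilbert function equals $\binom{m+n-1}{n}$ in all degrees $\geq m-1$ (equivalently, localizing at $P$, $[R/I_P^m]$ stabilizes to the length of $\mathcal{O}_{\PP^n,P}/\mathfrak{m}_P^m$, which is $\binom{m-1+n}{n}$); here $t = m+j \geq m$, so this applies.

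Combining the two displays yields
\[
\adim(X,t,m) = \dim[I_X]_t - \binom{m+n-1}{n} + \dim\left[\faktor{R}{(I_X+I_P^m)}\right]_t = \vdim(X,t,m) + \dim\left[\faktor{R}{(I_X+I_P^m)}\right]_t,
\]
and rearranging gives exactly $AV_{X,j}(m) = \dim[R/(I_X+I_P^m)]_{m+j}$, as claimed. The only nonroutine point — and the one I would be most careful about — is verifying $\dim[R/I_P^m]_{m+j} = \binom{m+n-1}{n}$ in the relevant degree range; everything else is bookkeeping with exact sequences of finite-dimensional vector spaces. One should also note that this identity implicitly reproves the elementary fact $\adim \geq \vdim$, since the quotient dimension on the right is nonnegative.
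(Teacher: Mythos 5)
Your proposal is correct and takes essentially the same route as the paper: both arguments come down to the inclusion--exclusion identity $\dim[I_X\cap I_P^m]_t+\dim[I_X+I_P^m]_t=\dim[I_X]_t+\dim[I_P^m]_t$ (the paper packages it as the single exact sequence $0\to R/(I_X\cap I_P^m)\to R/I_X\oplus R/I_P^m\to R/(I_X+I_P^m)\to 0$, while you split it into two isomorphism-theorem sequences), combined with the same key numerical input that $h_{P^m}(t)=\binom{m-1+n}{n}$ for $t\ge m-1$, applied with $t=m+j\ge m$. No gaps; the bookkeeping and the fat-point Hilbert function computation are both handled correctly.
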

\begin{proof}
	Set $t:=m+j$. From the short exact sequence
	\[
	0\to R/(I_X \cap I_P^m) \to R/I_X \oplus R/ I_P^m \to R/(I_X + I_P^m)\to 0
	\]
	we get the relation between the dimension of the modules in degree $t$
	\[
	\dim[R]_t- {\adim} (X,t,m) -\dim[R]_t+\dim[I_X]_t - \dim[R]_t+ \dim [I_P^m]_t +\dim[ R/(I_X + I_P^m)]_t= 0.
	\]
	Therefore
	\[
	\begin{array}{rcl}
	\dim[ R/(I_X + I_P^m)]_t&=& {\adim} (X,t,m) -\dim[I_X]_t + \dim[R]_t-\dim [I_P^m]_t =\\
	&=& {\adim} (X,t,m) -\dim[I_X]_t + h_{P^m}(t) .
	\end{array}
	\]
	Since $t\ge m$ the Hilbert function of the fat point $P^m$ in degree $t$ is 
	$h_{P^m}(t)={m+n-1\choose n}$. (It reaches the degree $\deg(P^m)$ in degree $m-1$.) So we get
	\[\dim[ R/(I_X + I_P^m)]_t = {\adim} (X,t,m) - {\vdim} (X,t,m) \] 
	as desired.
\end{proof}

\begin{remark} \label{cohom interp}
	We can also give a cohomological interpretation for the sequence $AV_{X,j}(m)$. 
	 Let $X$ be an ACM subscheme in $\PP^n$ of dimension $\geq 1$. Assume $m \geq 0$ and $j \geq 0$. Consider the exact sequence of sheaves
	\[
	0 \rightarrow \mathcal I_{X \cup P^m} \rightarrow \mathcal I_X \rightarrow \mathcal O_{P^m} \rightarrow 0.
	\]
	Twisting by $m+j$ and taking cohomology gives the exact sequence
	\[
	0 \rightarrow [I_{X \cup P^m}]_{m+j} \rightarrow [I_X]_{m+j} \rightarrow H^0(\mathcal O_{P^m}(m+j)) \rightarrow H^1(\mathcal I_{X \cup P^m}(m+j)) \rightarrow 0.
	\]
	(Exactness on the right is because $X$ is ACM of dimension $\geq 1$; see \cite[pp. 9-11]{migbook}.) This gives
	\[
	h^1(\mathcal I_{X \cup P^m}(m+j)) = \binom{(m-1)+n}{n} - \dim [I_X]_{m+j} + \dim [I_{X \cup P^m}]_{m+j} = AV_{X,j}(m).
	\]
\end{remark}

\begin{remark}
Given a subscheme $X \subset \PP^n$, it is natural to ask about the persistence 
of the unexpectedness imposed by $X$. For example, in \cite[Corollary 2.12]{HMNT}, it is 
shown that a nondegenerate curve $C \subset \PP^3$ of degree $d=\deg C$ admits 
an unexpected hypersurface of degree $t$ for multiplicity $t$ at a general point for all $t \geq d$. 
Thus fixing $0 = j = t-m$, and fixing $C$, we have the persistence of unexpectedness as long as $t \geq d$.

Many of the results in this paper give formulas for the sequences $AV_{X,j} (m)$. Leaving 
aside the issue of whether ${\adim} (X,t,m) > 0$, this sequence can be interpreted both 
as a measure of  unexpectedness (how much bigger is the actual dimension than what one 
would expect?) and as a measure of persistence (how long is $AV_{X,j}(m)$ positive?). 
The fact that these sequences are represented by simple formulas, as we will see in the coming sections, is a pleasant bonus.
\end{remark}


\section{Generic initial ideals and unexpectedness}\label{s. gin and unexp}
In this section we relate the study of unexpected hypersurfaces of a subscheme $X\subseteq \PP^n$ 
to the generic initial ideal of $I_X$ with respect to the lexicographic order. Then, we prove that the $AV_{X,j}$ 
sequence, up to a shift, is an $O$-sequence. As a consequence of this result we are able to ensure the 
non-existence of unexpected hypersurfaces in several cases.
 
Let $R = K[x_0,x_1,\ldots,x_n]$ be a standard graded polynomial ring. In this section we only 
require $K$ to be infinite.  We assume the monomials of $R$ are ordered by $>_{lex}$, the 
lexicographic monomial order which satisfies $x_0> x_1 > \cdots > x_n$.	
We recall that a set $M\subseteq R$ of monomials is a {\em lex-segment} 
if the monomials have the same degree and they satisfy the condition that whenever $u,v$ 
are monomials with $u\geq v$ and $v\in M$, then $u\in M$ \cite{V}.
It is convenient to also refer to a vector subspace $W\subseteq R$
as a lex-segment if $W$ is spanned by a lex-segment in the previous sense.
We also recall that a homogeneous ideal $I\subseteq R$ is a {\em lex-segment ideal} if 
for each degree $d$ the the homogeneous component $I_d$ of $I$ of degree $d$
is a lex segment \cite{V}; see also \cite{Hu}.

For a graded ideal $I\subseteq R$, we will denote by $\gin(I)$ the generic initial ideal of $I$ with respect
to the monomial order $>_{lex}$. For an introduction to generic initial ideals, see for instance \cite{Gr} and  Section 15.9 in \cite{E}.
The next lemma relates the actual and virtual dimensions of a scheme in terms of the generic initial ideal of its ideal. 


\begin{lemma}\label{l. adim vdim and Gin} Let $X\subseteq \PP^n$ be a subscheme.  For any non-negative integers $t$ and $m$, we have 
	\begin{itemize}
		\item[\em (i)] ${\adim} (X,t,m)=\dim [\gin(I_X)\cap I_Q^m]_t$, where $Q = (1,0,\dots,0)$.
		\item[\em (ii)] ${\vdim} (X,t,m)={\vdim} (\gin(I_X),t,m)$.
	\end{itemize}
\end{lemma}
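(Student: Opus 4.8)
The plan is to exploit the fact that the generic initial ideal is obtained by acting on $I_X$ by a general element $g \in GL_{n+1}(K)$ and then passing to the initial ideal with respect to $>_{lex}$. For part (ii), the statement is purely about dimensions of graded pieces of ideals. Since $\dim [J]_t = \dim [\init_{>_{lex}}(J)]_t$ for any homogeneous ideal $J$ and any term order, and since applying $g \in GL_{n+1}(K)$ is a graded vector space isomorphism, we get $\dim[I_X]_t = \dim[g \cdot I_X]_t = \dim[\init_{>_{lex}}(g \cdot I_X)]_t = \dim[\gin(I_X)]_t$ for a general $g$. Because $\vdim(X,t,m)$ depends on $I_X$ only through $\dim[I_X]_t$ (the binomial term $\binom{m+n-1}{n}$ being intrinsic), part (ii) follows immediately once we note that $\dim[I_X]_t = \dim[\gin(I_X)]_t$; here I would remark that the equality $\vdim(\gin(I_X),t,m) = \dim[\gin(I_X)]_t - \binom{m+n-1}{n}$ is just the definition applied to the monomial ideal $\gin(I_X)$.

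For part (i), I would argue in two moves. First, changing coordinates by a general $g \in GL_{n+1}(K)$ sends the general point $P$ to another general point, and more importantly sends the pair $(I_X, I_P^m)$ to $(g\cdot I_X, I_{g(P)}^m)$; since $P$ is general, we may as well take $P = g^{-1}(Q)$, so that $g\cdot I_X$ vanishes to order $m$ at $Q$ precisely when $I_X$ vanishes to order $m$ at $P$. This gives $\adim(X,t,m) = \dim[(g\cdot I_X) \cap I_Q^m]_t$ for general $g$. The second and more delicate move is to pass from $g\cdot I_X$ to its initial ideal $\gin(I_X) = \init_{>_{lex}}(g\cdot I_X)$ while intersecting with the fixed ideal $I_Q^m$ of the distinguished point $Q = (1,0,\dots,0)$. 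The key observation is that $I_Q = (x_1, \dots, x_n)$ is itself a monomial ideal (in fact a lex-segment ideal), so $I_Q^m$ is a monomial ideal, and taking initial ideals interacts well with intersecting against it: one expects $\init_{>_{lex}}\big((g\cdot I_X) \cap I_Q^m\big) = \init_{>_{lex}}(g\cdot I_X) \cap I_Q^m = \gin(I_X) \cap I_Q^m$, at least in terms of Hilbert functions, because the leading terms of elements of $(g\cdot I_X) \cap I_Q^m$ are exactly the monomials that lie both in $\gin(I_X)$ and in $I_Q^m$.

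The main obstacle — and the step I would treat most carefully — is justifying that last equality of Hilbert functions, i.e.\ that $\dim[(g\cdot I_X)\cap I_Q^m]_t = \dim[\gin(I_X)\cap I_Q^m]_t$. One direction, $\dim[(g\cdot I_X)\cap I_Q^m]_t \le \dim[\gin(I_X)\cap I_Q^m]_t$, is straightforward: if $f \in (g\cdot I_X)\cap I_Q^m$ then its leading monomial lies in $\gin(I_X)$, and it also lies in $I_Q^m$ because $I_Q^m$ is a monomial ideal containing $f$ (so all monomials of $f$, in particular the leading one, lie in $I_Q^m$); a triangularity/Gaussian elimination argument on a basis of $[(g\cdot I_X)\cap I_Q^m]_t$ then produces that many linearly independent monomials in $[\gin(I_X)\cap I_Q^m]_t$. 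For the reverse inequality I would invoke the semicontinuity philosophy behind generic initial ideals: the family $\{g\cdot I_X\}$ over $GL_{n+1}(K)$ has constant Hilbert function, and the functions $g \mapsto \dim[(g\cdot I_X)\cap I_Q^m]_t$ are lower semicontinuous and achieve their generic (minimum) value on a dense open set; combined with Galligo's theorem that the generic initial ideal is attained on a dense open set where the Borel action stabilizes it, one concludes the dimension computed from $g\cdot I_X$ for generic $g$ equals the one computed from $\gin(I_X)$. Alternatively — and this may be the cleanest route — I would cite the flat degeneration of $g\cdot I_X$ to $\gin(I_X)$ given by a one-parameter subgroup (the weight degeneration realizing $\init_{>_{lex}}$), note that $I_Q^m$ is fixed under this degeneration since it is a monomial ideal stable under the relevant torus, and deduce that $\dim[(g\cdot I_X + I_Q^m)/\,\cdot\,]_t$, hence $\dim[(g\cdot I_X)\cap I_Q^m]_t$, is constant along the degeneration by flatness. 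I would present the triangularity argument for one inequality and the degeneration/semicontinuity argument for the other, keeping the Galligo-theorem citation (\cite{E}, Section 15.9, or \cite{Gr}) doing the heavy lifting rather than reproving it.
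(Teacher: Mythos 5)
Your part (ii) and the first move of part (i) (a general change of coordinates taking $P$ to $Q$), as well as the triangularity argument, all match the paper. But there is a genuine gap in the remaining half of (i). Your two arguments both push in the same direction: the triangularity argument gives $\dim[(g\cdot I_X)\cap I_Q^m]_t \le \dim[\gin(I_X)\cap I_Q^m]_t$, and semicontinuity along the Gr\"obner degeneration gives exactly the same inequality again, not its reverse. Indeed, in the flat family $J_\lambda$ degenerating $g\cdot I_X$ to $\gin(I_X)$, the dimension of $[J_\lambda + I_Q^m]_t$ is lower semicontinuous, so $\dim[J_\lambda \cap I_Q^m]_t$ is upper semicontinuous and can only jump \emph{up} at the special fibre $\gin(I_X)$; flatness of $J_\lambda$ does not make $J_\lambda \cap I_Q^m$ (or $J_\lambda + I_Q^m$) flat, and that constancy is precisely what is at issue, not a consequence of Galligo's theorem. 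The claim $\dim[\init(J)\cap M]_t=\dim[J\cap M]_t$ is in fact false for a general monomial ideal $M$: take $R=K[x,y]$ with $x>_{lex}y$, $J=(x+y)$, $M=(x)$; then $[J\cap M]_1=0$ while $[\init(J)\cap M]_1=\langle x\rangle$. So no argument that uses only ``$I_Q^m$ is a monomial ideal, hence torus-fixed'' can close the gap; the specific structure of $I_Q^m$ must enter.

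That structure is the content of the paper's proof, and it is also where your aside that $I_Q$ is ``a lex-segment ideal'' is off: with $x_0>x_1>\cdots>x_n$, the ideal $I_Q=(x_1,\dots,x_n)$ is the \emph{opposite} of a lex segment --- each graded piece $[I_Q^m]_t$ consists of the lex-\emph{smallest} monomials of degree $t$ (those with $x_0$-exponent at most $t-m$), so it is closed under passing to lex-smaller monomials of the same degree. This is exactly what yields the missing inequality: given a monomial $w\in[\gin(I_X)\cap I_Q^m]_t$, choose $W\in g\cdot I_X$ with $\init(W)=w$; every non-leading term of $W$ is lex-smaller than $w$, hence lies in $I_Q^m$, so $W\in (g\cdot I_X)\cap I_Q^m$, and these lifts (one for each such $w$) are linearly independent, giving $\dim[\gin(I_X)\cap I_Q^m]_t\le\dim[(g\cdot I_X)\cap I_Q^m]_t$. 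Note this uses both the choice of the lex order and the fact that $Q$ is the point dual to the lex-greatest variable $x_0$; your proposal, as written, never invokes this and therefore cannot be completed along the semicontinuity/flatness route.
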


\begin{proof}
(i) Let $X'$ be the image of $X$ under a general linear change of variables, so $\hbox{gin}(I_{X})=\hbox{in}(I_{X'})$
and so the point $Q$ is general for $X'$.	
Moreover, if $\mu_1,\mu_2$ are monomials of the same degree with $\mu_1\in[I_Q^m]_t$ and $\mu_1>_{lex}\mu_2$,
then $\mu_2\in I_Q^m$, hence $\hbox{in} (I_{X'}\cap I_Q^m)=\hbox{in} (I_{X'})\cap I_Q^m$. 
(To see this note that both sides of the equality are monomial ideals, and 
that $\hbox{in} (I_{X'}\cap I_Q^m)\subseteq \hbox{in} (I_{X'})\cap I_Q^m$ is clear.
So suppose that $w\in \hbox{in} (I_{X'})\cap I_Q^m$ is a monomial.  
Then there is a form $W\in I_{X'}$ with $w=\hbox{in}(W)$. 
It follows that the other terms of $W$ have lex order less than $w$, so
each is in $I_Q^m$, hence $W\in I_Q^m$, and we have $W\in I_{X'}\cap I_Q^m$
so $w\in \hbox{in} (I_{X'}\cap I_Q^m)$ giving $\hbox{in} (I_{X'}\cap I_Q^m)\supseteq \hbox{in} (I_{X'})\cap I_Q^m$.)
Thus we have
${\adim}(X,t,m)=\dim [I_X\cap I_P^m]_t 
			      = \dim [I_{X'}\cap I_Q^m]_t 
			      = \dim [\hbox{in}(I_{X'}\cap I_Q^m)]_t 
			      = \dim [\hbox{in}(I_{X'})\cap I_Q^m]_t 
			      = \dim [\hbox{gin}(I_{X})\cap I_Q^m]_t$.

(ii) This is a consequence of $h_X=h_{R/(\gin(I_X))}$.
\end{proof}


\begin{remark} \label{r: partial elimination ideal} 
	Let $I=\oplus_k I_k$ be a homogeneous ideal in $R$. 
Then we make the following definition (see \cite[Definition 6.1]{Gr}):
		\[
		\widetilde K_d(I) =\bigoplus_k[I\cap I_Q^{k-d}]_k,
		\]
where $Q = (1,0,\dots,0)$, so
	 $\widetilde K_d(I)$ is a graded module over $K[x_1,\ldots,x_n]$. 
	 In particular, $\widetilde K_0(I)$ is an ideal: it is obtained from $I$ by \textit{eliminating} the variable $x_0$. 
	 Geometrically, it corresponds to the linear space of the cones having vertex at $Q$. Then Lemma \ref{l. adim vdim and Gin} (i) can be rephrased as follows: 	
	 \[
	 \dim (I_X \cap I_P^m)_t=\dim \left[\widetilde K_{t-m}\left(\gin(I_X) \right)\right]_t.
	 \]
\end{remark}

\begin{example}\label{Fig1Fig2Example}
Assume $\hbox{\rm char}(K)=0$. 
Here, given only the Hilbert function $h_X$ of a set of points $X\subset\PP^2$,
we show how information about $\gin(I_X)$ relates to whether or not $X$ has
an unexpected curve.
So let $X\subseteq \PP^2$ be a set of 13 points with 
$h_X=(1, 3, 6, 10, 12, 13, 13, \ldots)$,
so $X$ lies on three independent quartics but no cubics (such examples exist, as we see later in this example). Then 
$\dim [I_X]_6=15$, so ${\vdim} (X,6,5)=0$, hence $X$ admits an unexpected curve 
of degree~$6$ if and only if  ${\adim}(X,6,5)>0$.  
From Lemma \ref{l. adim vdim and Gin}(i),  
${\adim}(X,6,5)>0$ if and only if $\dim [\gin(I_X)\cap I_Q^5]_6>0.$ In this case, the ideal defining the point $Q$ is $I_Q=(y,z)\subseteq K[x,y,z]$, so the monomials of degree 6 contained in $[I_Q^5]_6$ are exactly those
lexicographically less than or equal to $xy^5$.
But (in characteristic 0) generic initial ideals are (by \cite{CaS}) strongly stable 
(see \cite{AL} for the definition and properties of strong stability),
hence if any monomial of degree 6 less than or equal to $xy^5$ is in $\gin(I_X)$,
then $xy^5$ is in $\gin(I_X)$ too. 
Thus $X$ admits an unexpected curve of degree 6 if and only if $xy^5\in \gin(I_X)$.

Now we determine all the monomials in $[\gin(I_X)]_{\le 6}$, assuming that $X$ admits no unexpected curves in degrees strictly lower than 6. With this assumption and using the fact that unexpected curves have degree at least 4 
\cite{A, FGST}, from Lemma \ref{l. adim vdim and Gin}(i) we have 
\begin{itemize}
	\item  $[\gin(I_X)\cap I_Q^3]_4=[\gin(I_X)\cap I_Q^4]_5=(0)$, so if $x^ay^bz^c \in [\gin(I_X)]_{\le 6}$ then $a\ge 2$, and
\smallskip
	\item ${\edim}(X,4,2)={\adim}(X,4,2)=0$ (by Bertini's Theorem), hence $[ \gin(I_X)\cap I_Q^2]_4=(0)$, so if $x^ay^bz^c \in [\gin(I_X)]_{4}$ then $a\ge 3.$
\end{itemize}
Collecting this information, we get only one strongly stable ideal through degree 5 (with the given Hilbert function), that is 
$(x^4, x^3y, x^3z, 
x^2y^3, x^2y^2z)$. So, $[\gin(I_X)]_{\le 5}$ is a lex segment.

Taking generators up to degree 6 of the lex-segment ideal with Hilbert function $h_X$ we get
\[ L = (x^4, x^3y, x^3z, x^2y^3, x^2y^2z, x^2yz^3, x^2z^4), \]
thus $X$ has an unexpected curve of degree 6 with a general multiple point of multiplicity 5 if and only if the component $[\gin(I_X)]_6$ 
fails to be equal to the degree 6 component of the lex-segment ideal with the same Hilbert function.

As promised, we now show that sets $X$ do arise.
We first give an example of a set $X$ of 13 points with no unexpected curves of degree 6 or less.
We get 12 points of the points of $X$
as the complete intersection of a general cubic and quartic; add to this a general point $Q$
to obtain $X$. It is easy to see that $X$ has the Hilbert function as specified above,
and by direct computation with Macaulay2 \cite{GS}, we find that
$X$ has no unexpected curves of degree 6 or less. In this case, we find from Macaulay2 that
$\gin(I_X)=(x^4, x^3y, x^3z,
x^2y^3, x^2y^2z,
x^2yz^3, x^2z^4,
xy^6, xy^5z,
xy^4z^3,
xy^3z^5,
xy^2z^7,
xyz^9,
xz^{11},
y^{13})$, hence $xy^5$, as claimed, does not occur.

We now give two examples of an $X$ with the specified Hilbert function which do have unexpected sextics.
Returning to the specfied Hilbert function, we see that two monomials of degree 6 are needed in the minimal set of generators of $\gin(I_X)$. If $X$ admits an unexpected curve of degree 6 with a general multiple point of multiplicity 5, we have already noticed that $xy^5\in \gin I_X$.
Thus, the strongly stable property forces $[\gin(I_X)]_{\le 6}$ to be either
\[
\mathcal G_1 = (x^4, x^3y, x^3z, x^2y^3, x^2y^2z,x^2yz^3, xy^5)
\]
or 
\[
\mathcal G_2 = (x^4, x^3y, x^3z, x^2y^3, x^2y^2z, xy^5, xy^4z).
\]
By direct computation we see that a set of points $X$ with $[\gin(I_X)]_{\le 6}= \mathcal G_2$ would have ${\adim}(X,6,5)=2$,
but by \cite[Corollary 5.5]{CHMN} this would mean $X$ has an unexpected quintic, contrary to our assumption
that $X$ admits no unexpected curves in degrees strictly lower than 6.
Thus $[\gin(I_X)]_{\le 6}= \mathcal G_2$ cannot occur if $X$ is a reduced set of 13 points.

However, $[\gin(I_X)]_{\le 6}= \mathcal G_1$ can occur; we give two examples.
Specifically, the following two sets of points $X_1$ and $X_2$ have $h_{X_1}=h_{X_2}=h_X$ 
and $[\gin(I_{X_i})]_{\le 6}= \mathcal G_1$. The lines (see Figure \ref{f.line config X }) dual to the points $X_1$
give what \cite{DMO} refers to as a $(1,1)$ tic-tac-toe arrangement:
    
\[\begin{array}{ccc}
X_1&:=&\{(1,0,0), (0,1,0), (0,0,1), (1,1,0),(0,1,1),(1,0,1), (-1,1,0),(0,-1,1),(-1,0,1),\\
&&(1,1,1), (-1,1,1),(-1,1,-1),(1,1,-1)\}.\\
\end{array}
	\]
The ideal defining $X_1$ is
\[
I_{X_1}=(y^3z - yz^3, x^3z - xz^3, x^3y - xy^3).
\]

  \begin{figure}[!ht]
  	\centering
  	\begin{tikzpicture}[scale=0.7]
   \draw (-4,-1) -- (4,-1);
   \draw (-4,0) -- (4,0);	
   \draw (-4,1) -- (4,1);
   
   \draw (1,-4) -- (1,4);
   \draw (0,-4) -- (0,4);	
   \draw (-1,-4) -- (-1,4);  	

   \draw (-4,-3) -- (3,4);
   \draw (-4,-4) -- (4,4);
   \draw (-3,-4) -- (4,3);

   \draw (-4,3) -- (3,-4);
   \draw (-4,4) -- (4,-4);
   \draw (-3,4) -- (4,-3);
 
  	\end{tikzpicture}
  	\caption{A sketch of the line configuration dual to the points of $X_1$ from Example \ref{Fig1Fig2Example}. 
	(The line at infinity, corresponding to the point $(0,0,1)$,  is not shown).}
  	\label{f.line config X } 
  \end{figure}
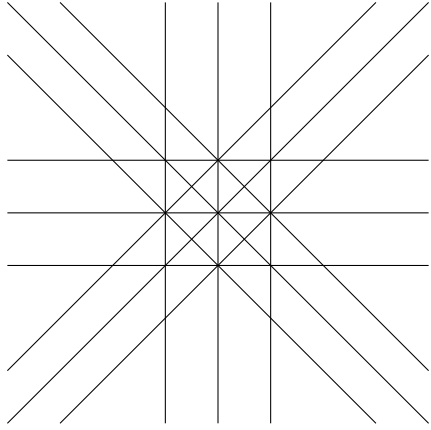

The second set of points is
\[\begin{array}{ccc}
X_2&:=&\{(1,0,0), (0,1,0), (0,0,1), (1,1,0),(0,1,1),(1,0,1), (-1,1,0),(0,-1,1),(-1,0,1),\\
&&(2,1,1), (-2,1,1),(-2,1,-1),(2,1,-1)\};\\
\end{array}
\]
it is defined by the ideal
\[I_{X_2}=(y^3z - yz^3, x^3z - xz^3 - 3xy^2z, x^3y - xy^3 - 3xyz^2).\]
(The lines dual to $X_2$ are shown in Figure \ref{f.line config X'}.)
 
  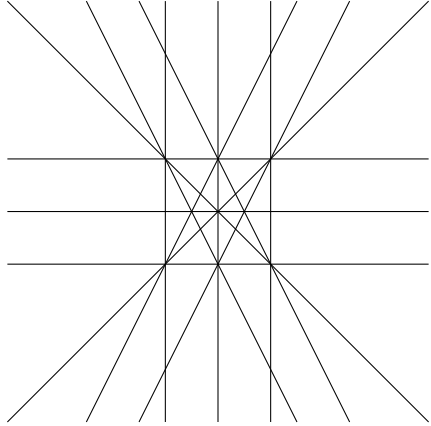
\begin{figure}[!ht]
  	\centering
  	\begin{tikzpicture}[scale=0.7]
   \draw (-4,-1) -- (4,-1);
   \draw (-4,0) -- (4,0);	
   \draw (-4,1) -- (4,1);
   
   \draw (1,-4) -- (1,4);
   \draw (0,-4) -- (0,4);	
   \draw (-1,-4) -- (-1,4);  	
  	
  	\draw (-2.5,-4) -- (1.5,4);
  	\draw (-4,-4) -- (4,4);
  	\draw (-1.5,-4) -- (2.5,4);
  	
  	\draw (-2.5,4) -- (1.5,-4);
  	\draw (-4,4) -- (4,-4);
  	\draw (-1.5,4) -- (2.5,-4);
  	
  	\end{tikzpicture}
  	\caption{A sketch of the line configuration dual to the points of $X_2$ from Example \ref{Fig1Fig2Example}. 
	(The line at infinity, corresponding to the point $(0,0,1)$, is not shown).}
  	\label{f.line config X'} 
  \end{figure}
\end{example}

We now show that the sequence $AV_{X,j}$ is the Hilbert 
function of a fixed standard graded algebra; since $AV_{X,j}(m)$ is defined for $m\ge 1$  we need to shift it by 1.

\begin{theorem}\label{t. AV is an O-sequence}
For any non-negative integer $j$, the sequence $AV_{X,j}$ 
shifted to the left by $1$ is an $O$-sequence. In particular, setting $J:=\gin(I_X)\ :\ x_0^{j+1},$  
the sequence $AV_{X,j}$ shifted to the left by $1$ coincides with the Hilbert function of $R/J$, i.e.,
	\[
	AV_{X,j}(d+1)=h_{R/J}(d), \ d\ge 0.
	\]

\end{theorem}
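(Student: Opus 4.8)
The plan is to reduce the statement to a count of monomials attached to the single monomial ideal $G := \gin(I_X)$, using the two lemmas already established. Throughout, $Q = (1,0,\dots,0)$ and $I_Q = (x_1,\dots,x_n)$, so $I_Q^m = (x_1,\dots,x_n)^m$.

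First I would rewrite $AV_{X,j}$ entirely in terms of $G$ and $Q$. By Lemma~\ref{l.AV seq 1}, $AV_{X,j}(m) = \adim(X,m+j,m) - \vdim(X,m+j,m)$, and by Lemma~\ref{l. adim vdim and Gin} we have $\adim(X,m+j,m) = \dim[G \cap I_Q^m]_{m+j}$ and $\vdim(X,m+j,m) = \dim[G]_{m+j} - \binom{m+n-1}{n}$. Feeding these into the short exact sequence
\[
0 \to R/(G \cap I_Q^m) \to R/G \oplus R/I_Q^m \to R/(G + I_Q^m) \to 0
\]
taken in degree $t := m+j$, together with the fact that $\dim[R/I_Q^m]_t = \binom{m+n-1}{n}$ for $t \ge m$ (the Hilbert function of the fat point $Q^m$ has already stabilized — this is exactly where the hypothesis $j \ge 0$ enters), gives
\[
AV_{X,j}(m) = \dim\bigl[ R/(G + I_Q^m) \bigr]_{m+j}.
\]

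Next I would evaluate the right-hand side by counting monomials. Since $G$ and $I_Q^m$ are monomial ideals, $[R/(G+I_Q^m)]_{m+j}$ has for a $K$-basis the monomials of degree $m+j$ lying in neither $G$ nor $I_Q^m$. A monomial $x_0^{a_0} x_1^{a_1}\cdots x_n^{a_n}$ of degree $m+j$ fails to lie in $I_Q^m$ precisely when $a_1 + \cdots + a_n \le m-1$, i.e.\ when $a_0 \ge j+1$; hence such monomials are exactly those of the form $x_0^{j+1}\mu$ with $\mu$ an arbitrary monomial of degree $m-1$, and for such a monomial the condition of not lying in $G$ is equivalent to $\mu \notin (G : x_0^{j+1}) = J$. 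As $J$ is again a monomial ideal, the number of such $\mu$ is $h_{R/J}(m-1)$. Therefore $AV_{X,j}(m) = h_{R/J}(m-1)$ for all $m \ge 1$, i.e.\ $AV_{X,j}(d+1) = h_{R/J}(d)$ for $d \ge 0$; and since $R/J$ is a standard graded $K$-algebra, Theorem~\ref{macaulay thm} shows its Hilbert function is an $O$-sequence, which is the assertion. One should also note the degenerate case $x_0^{j+1} \in G$, where $J = R$ and both sides of the identity vanish identically.

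I do not anticipate a genuine obstacle: with Lemmas~\ref{l.AV seq 1} and \ref{l. adim vdim and Gin} in hand the argument is essentially bookkeeping. The two points requiring care are getting the left-shift by $1$ right — it is forced by the decomposition $m+j = (j+1) + (m-1)$ of the degree — and keeping track of where $j \ge 0$ is used, namely to guarantee that $\binom{m+n-1}{n}$ is the correct value of $h_{Q^m}$ in degree $m+j$.
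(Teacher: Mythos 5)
Your proof is correct and follows essentially the same route as the paper's: you first reduce to the identity $AV_{X,j}(m)=\dim[R/(\gin(I_X)+I_Q^m)]_{m+j}$ via Lemma~\ref{l.AV seq 1} and Lemma~\ref{l. adim vdim and Gin}, and then exploit the decomposition of degree-$(m+j)$ monomials outside $I_Q^m$ as $x_0^{j+1}$ times degree-$(m-1)$ monomials, which is exactly the paper's direct-sum splitting $[T]_{m+j}=[x_0^{j+1}(T:x_0^{j+1})]_{m+j}\oplus[T\cap \mathfrak q^m]_{m+j}$ phrased as a count of standard monomials of $R/J$. The only difference is presentational, so there is nothing to add.
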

\begin{proof}Set $\mathfrak q= (x_1,\cdots, x_n)$ the ideal defining the point $Q=(1,0,\ldots,0)$.
For any $m\ge 1$ and for any non-negative integer $j$, from Lemma \ref{l.AV seq 1}, reasoning as in the proof of Lemma \ref{l. adim vdim and Gin}, we have
$$AV_{X,j}(m)=\dim\left[ \faktor{R}{(I_X + I_P^m)}\right]_{m+j}=\dim\left[  \faktor{R}{( \gin(I_X)  + \mathfrak q^m)}\right]_{m+j} .$$

For a monomial ideal $T$, it is easy to show that we can write $\left[T\right]_{m+j}$ as the following direct sum (and so the summands have only 0 in common):
\[ \left[T\right]_{m+j}= \left[x_0^{j+1}\cdot (T : x_0^{j+1}) \right]_{m+j}\bigoplus\left[ T\cap  \mathfrak q^m\right]_{m+j}.\]
In particular, 
$$\left[R\right]_{m+j}= \left[x_0^{j+1}\cdot (R : x_0^{j+1}) \right]_{m+j}\bigoplus\left[ R\cap  \mathfrak q^m\right]_{m+j}=\left[x_0^{j+1}R\right]_{m+j}\bigoplus\left[\mathfrak q^m\right]_{m+j},$$ 
so $\dim \left[{R/\mathfrak q^m}\right]_{m+j}=\dim \left[x_0^{j+1}R\right]_{m+j}$.
Similarly, 
\begin{align*}
\left[\gin(I_{X}) + \mathfrak q^m\right]_{m+j}&=\left[x_0^{j+1}\cdot ((\gin(I_{X}) + \mathfrak q^m) : x_0^{j+1}) \right]_{m+j}\bigoplus\left[(\gin(I_{X}) + \mathfrak q^m)\cap  \mathfrak q^m\right]_{m+j}\\
&=\left[x_0^{j+1}\cdot (\gin(I_{X}) : x_0^{j+1}) \right]_{m+j}\bigoplus\left[\mathfrak q^m\right]_{m+j},
\end{align*}
so $\dim \left[ {(\gin(I_{X}) + \mathfrak q^m)/\mathfrak q^m}\right]_{m+j}=\dim \left[x_0^{j+1}\cdot (\gin(I_{X}) : x_0^{j+1}) \right]_{m+j}$.
Thus
	\[\begin{array}{rcl}
	\dim\left[\dfrac{R}{\gin(I_{X}) +\mathfrak q^m}\right]_{m+j}&=&  \dim \left[\dfrac{R/\mathfrak q^m}{(\gin(I_{X}) + \mathfrak q^m)/\mathfrak q^m}\right]_{m+j}\\
	& & \\
	&=&  \dim \left[{R/\mathfrak q^m}\right]_{m+j} -\dim \left[ {(\gin(I_{X}) + \mathfrak q^m)/\mathfrak q^m}\right]_{m+j}\\
	& & \\
	&=&  \dim \left[x_0^{j+1}R \right]_{m+j} -\dim \left[ x_0^{j+1}\cdot (\gin(I_{X})  : x_0^{j+1})\right]_{m+j}\\
	& & \\
	&=&  \dim \left[R \right]_{m-1} -\dim \left[\gin(I_{X})  : x_0^{j+1}\right]_{m-1}\\
	& & \\
	&=&  \dim \left[R / (\gin(I_{X})  : x_0^{j+1})\right]_{m-1}.\\
	\end{array}
	\]	 
\end{proof}

As a consequence of Theorem \ref{t. AV is an O-sequence} we get a new criterion
(in this case, geometric) for 
the non-existence of unexpected hypersurfaces. 

\begin{corollary} \label{degenerate}
	If $X$ is a  reduced subscheme of $\PP^n$ contained in a hypersurface of degree $d+1\geq1$, 
	then for any $t\geq d+m$ and $m\ge 1$ we have 
\[
{\adim} (X,t,m) = {\vdim} (X,t,m).
\]
In particular, if $X$ is degenerate (meaning, $X$ is contained in a hyperplane, or, equivalently, $d=0$), then
$X$ admits no unexpected hypersurfaces for multiplicity $m$ in degrees $t\geq m$,
and hence no unexpected hypersurfaces of any kind.
\end{corollary}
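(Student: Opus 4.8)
The plan is to derive the statement from Theorem~\ref{t. AV is an O-sequence}, which identifies $AV_{X,j}(m)$ with $h_{R/J}(m-1)$ for $J=\gin(I_X):x_0^{j+1}$. The whole argument then reduces to showing that the hypothesis forces $x_0^{d+1}\in\gin(I_X)$: once this holds, the colon ideal $J$ becomes the unit ideal as soon as $j\ge d$, and its Hilbert function vanishes identically, so $AV_{X,j}\equiv 0$ for all such $j$.

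So the first and central step is to prove that $x_0^{d+1}\in\gin(I_X)$ whenever $X$ lies on a hypersurface of degree $d+1$. Pick a nonzero $F\in[I_X]_{d+1}$. For a general linear change of variables $x_i\mapsto\ell_i$, evaluating the transformed form $F(\ell_0,\dots,\ell_n)$ after setting $x_1=\cdots=x_n=0$ yields $x_0^{d+1}$ times the value of $F$ at a general point, which is nonzero; hence the coefficient of $x_0^{d+1}$ in the transformed form is nonzero. Since $x_0^{d+1}$ is the lex-largest monomial of degree $d+1$, it is then the lex-leading term of the transformed form, so $x_0^{d+1}\in\init(g\cdot I_X)=\gin(I_X)$. (Alternatively one can cite that $\gin(I_X)$ is Borel-fixed and that a Borel-fixed ideal that is nonzero in degree $d+1$ must contain $x_0^{d+1}$; the substitution argument has the advantage of imposing no hypothesis on $\mathrm{char}(K)$, consistent with the running assumption of this section.)

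Granting this, for every $j\ge d$ we have $x_0^{j+1}=x_0^{\,j-d}\cdot x_0^{d+1}\in\gin(I_X)$, so $1\in J=\gin(I_X):x_0^{j+1}$, i.e.\ $J=R$. Theorem~\ref{t. AV is an O-sequence} then gives $AV_{X,j}(m)=h_{R/J}(m-1)=0$ for all $m\ge 1$. For the first assertion, given $t\ge d+m$ with $m\ge 1$, set $j:=t-m\ge d$; then ${\adim}(X,t,m)-{\vdim}(X,t,m)=AV_{X,j}(m)=0$, as claimed.

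For the degenerate case $d=0$, the displayed equality holds for all $t\ge m\ge 1$, and combined with the always-valid chain ${\adim}(X,t,m)\ge{\edim}(X,t,m)\ge{\vdim}(X,t,m)$ it forces ${\adim}(X,t,m)={\edim}(X,t,m)$, so $X$ admits no unexpected hypersurface of degree $t$ for multiplicity $m$ for any such pair; since Definition~\ref{unexpDef} ascribes unexpectedness only when $t\ge m$, this covers every case, so $X$ has no unexpected hypersurfaces of any kind. The only point requiring genuine thought is the claim $x_0^{d+1}\in\gin(I_X)$; everything after it is formal bookkeeping with Theorem~\ref{t. AV is an O-sequence}.
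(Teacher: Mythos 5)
Your proposal is correct and follows essentially the same route as the paper: show $x_0^{d+1}\in\gin(I_X)$, deduce that $\gin(I_X):x_0^{j+1}=R$ for $j\ge d$, and conclude via Theorem~\ref{t. AV is an O-sequence} that $AV_{X,j}\equiv 0$, with the degenerate case following from Definition~\ref{unexpDef}. The only difference is that you spell out the substitution argument for $x_0^{d+1}\in\gin(I_X)$ (valid over any infinite field), which the paper simply asserts.
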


\begin{proof}
	Since $I_X$ has an element of degree $d+1$, $x_0^{d+1}\in \gin(I_X)$. This implies  
	$1\in (\gin(I_{X})  : x_0^{t-m+1})$ when $t\geq d+m, m\geq 0$. 
	From Theorem \ref{t. AV is an O-sequence} and Definition \ref{def of AV},
	for $t=m+j, j\geq d, m\geq 1$, we have $0=AV_{X,j}(m)={\adim}(X,t,m) - {\vdim}(X,t,m)$.
	The last part, about unexpected hypersurfaces, follows from Definition \ref{unexpDef}.
\end{proof}

The next corollary compares the sequences $AV_{X,j} (m) $ and $  AV_{X,j+1}(m)$  for a subscheme $X$.

\begin{corollary} \label{ineq for AV}
		Let $X$ be a subscheme of $\PP^n$. Then
		\[AV_{X,j} (m) \ge  AV_{X,j+1}(m).\] 
\end{corollary}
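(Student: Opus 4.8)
The plan is to deduce the inequality $AV_{X,j}(m) \ge AV_{X,j+1}(m)$ directly from the algebraic identification established in Theorem \ref{t. AV is an O-sequence}, namely that $AV_{X,j}(m) = h_{R/(\gin(I_X):x_0^{j+1})}(m-1)$. Writing $J_j := \gin(I_X):x_0^{j+1}$ and $J_{j+1} := \gin(I_X):x_0^{j+2}$, the statement we must prove becomes the purely monomial-ideal assertion
\[
h_{R/J_j}(m-1) \ \ge \ h_{R/J_{j+1}}(m-1) \qquad \text{for all } m\ge 1,
\]
i.e. $h_{R/J_j}(d) \ge h_{R/J_{j+1}}(d)$ for all $d \ge 0$. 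So it suffices to show the containment of ideals $J_j \subseteq J_{j+1}$, which is immediate: if $f \in (\gin(I_X):x_0^{j+1})$ then $x_0^{j+1} f \in \gin(I_X)$, hence $x_0^{j+2} f = x_0 \cdot (x_0^{j+1} f) \in \gin(I_X)$, so $f \in (\gin(I_X):x_0^{j+2})$. A larger ideal has a smaller quotient in every degree, giving the desired inequality of Hilbert functions.

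First I would recall the formula $AV_{X,j}(d+1) = h_{R/(\gin(I_X):x_0^{j+1})}(d)$ from Theorem \ref{t. AV is an O-sequence}. Next I would observe the elementary colon-ideal containment $(\gin(I_X):x_0^{j+1}) \subseteq (\gin(I_X):x_0^{j+2})$, noting that this holds for the colon by any fixed element with respect to successive powers. Then, since for graded ideals $I \subseteq I'$ one has $\dim_K[R/I]_d \ge \dim_K[R/I']_d$ for every $d$, I would conclude $h_{R/(\gin(I_X):x_0^{j+1})}(d) \ge h_{R/(\gin(I_X):x_0^{j+2})}(d)$ for all $d\ge 0$. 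Translating back through the shift in Theorem \ref{t. AV is an O-sequence} gives $AV_{X,j}(m) \ge AV_{X,j+1}(m)$ for all $m \ge 1$, as claimed.

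There is essentially no obstacle here: every step is a one-line consequence of definitions and of the already-proved Theorem \ref{t. AV is an O-sequence}. The only thing worth a moment's care is making sure the degree shift is handled consistently — $AV_{X,j}$ is indexed starting at $m=1$ while the Hilbert function is indexed starting at $d=0$ — but Theorem \ref{t. AV is an O-sequence} has already absorbed exactly this bookkeeping, so invoking it verbatim for both $j$ and $j+1$ suffices. One could alternatively give a proof bypassing the $\gin$ machinery, using only Lemma \ref{l.AV seq 1} and the inclusion $[R/(I_X+I_P^m)]_{m+j} \twoheadleftarrow$-style comparison across degrees $m+j$ and $m+j+1$, but that route requires a multiplication-by-a-linear-form argument and is strictly more work than the colon-ideal observation above.
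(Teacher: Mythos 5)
Your proof is correct and is essentially identical to the paper's: both invoke Theorem \ref{t. AV is an O-sequence} to write $AV_{X,i}(m)$ as $\dim[R/(\gin(I_X):x_0^{i+1})]_{m-1}$ and then use the obvious containment $\gin(I_X):x_0^{j+1}\subseteq\gin(I_X):x_0^{j+2}$ to compare Hilbert functions. No further comment is needed.
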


\begin{proof}From Theorem \ref{t. AV is an O-sequence} we have, for any $i$,
\[
AV_{X,i} (m) = \dim \left[R / (\gin(I_{X})  : x_0^{i+1})\right]_{m-1}.
\]
So the statement is equivalent to proving that
\[
\dim \left[\gin(I_{X})  : x_0^{j+1} \right]_{m-1}\le \dim \left[\gin(I_{X})  : x_0^{j+2} \right]_{m-1},
\]
and this is trivial since we always have   $\gin(I_{X})  : x_0^{j+1} \subseteq \gin(I_{X})  : x_0^{j+2}.$ 
\end{proof}

\begin{lemma} \label{subscheme}
Let $P$ be a general point in $\PP^n$. 
Let $Y_1$ be  the zero-dimensional subscheme of $\PP^n$ defined by $I_P^m$ and let $Y_2$ be the subscheme of $Y_1$ defined by $I_P^{m-1}$.  Fix a positive integer~$t \geq m-1$ and consider the component $ [I_X]_t$, where $X$ is some subvariety of $\PP^n$. If $Y_1$ imposes $\binom{m-1+n}{n}$ independent conditions on $[I_X]_t$ then  $Y_2$ imposes $\binom{m-2+n}{n}$ independent conditions on $[I_X]_t$.
\end{lemma}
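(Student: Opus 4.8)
The plan is to translate the statement entirely into the language of the $AV$ sequences and then feed it into Theorem \ref{t. AV is an O-sequence}. First I would set up the dictionary. For a general point $P$ the fat point scheme $P^k$ has degree $\binom{k-1+n}{n}$, and for a subscheme $X$ the number of conditions it imposes on $[I_X]_t$ is, by definition, $\dim[I_X]_t - \dim[I_X\cap I_P^k]_t = \dim[I_X]_t - {\adim}(X,t,k)$; this attains its largest possible value $\binom{k-1+n}{n}$ exactly when ${\adim}(X,t,k) = {\vdim}(X,t,k)$, i.e. exactly when $AV_{X,\,t-k}(k) = 0$ (assuming $t\ge k$). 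Writing $j = t-m$, the hypothesis that $Y_1 = P^m$ imposes $\binom{m-1+n}{n}$ independent conditions on $[I_X]_t$ becomes $AV_{X,j}(m) = 0$, while the desired conclusion that $Y_2 = P^{m-1}$ imposes $\binom{m-2+n}{n}$ independent conditions becomes $AV_{X,j+1}(m-1) = 0$. I would dispose first of the two degenerate cases: if $m = 1$ then $Y_2$ is empty and the conclusion is vacuous, and if $t = m-1$ then $[I_P^m]_{m-1} = 0$ forces $[I_X]_{m-1} = [R]_{m-1}$, whence $Y_2 = P^{m-1}$ imposes independent conditions on $[R]_{m-1}$ because $m-1 \ge (m-1)-1$. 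So from here on $m\ge 2$ and $j = t-m\ge 0$, and both $AV$-values are legitimately defined.

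Next I would apply Theorem \ref{t. AV is an O-sequence}: putting $J_i := \gin(I_X) : x_0^{i+1}$, it gives $AV_{X,i}(d+1) = h_{R/J_i}(d)$. Hence the hypothesis says $h_{R/J_j}(m-1) = 0$ and the goal says $h_{R/J_{j+1}}(m-2) = 0$, and one has $J_{j+1} = J_j : x_0$.

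The remaining ingredient is the elementary observation that for any homogeneous ideal $I\subseteq R$ multiplication by $x_0$ induces an injective degree-one map $R/(I:x_0)\hookrightarrow R/I$ (it is well defined since $x_0 f\in I$ whenever $f\in I:x_0$, and it has trivial kernel for the same reason), so that $h_{R/(I:x_0)}(d)\le h_{R/I}(d+1)$ for every $d$. Taking $I = J_j$ and $d = m-2$ yields $h_{R/J_{j+1}}(m-2)\le h_{R/J_j}(m-1) = 0$, which is precisely what we want. I do not expect a real obstacle here: the only delicate points are keeping the binomial coefficients and the degree shift $d\mapsto d+1$ straight in the dictionary, and cleanly handling the boundary cases $m=1$ and $t=m-1$; the substantive content is just the one-step monotonicity $h_{R/(I:x_0)}(d)\le h_{R/I}(d+1)$ together with Theorem \ref{t. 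AV is an O-sequence}.
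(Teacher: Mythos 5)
Your proof is correct, and it takes a genuinely different route from the paper. The paper argues directly with ideal sheaves: it compares $I_{Y_1}$ and $I_{Y_2}$ via the exact sequence $0 \to I_{Y_1} \to I_{Y_2} \to A \to 0$ with $A$ supported at $P$, sheafifies, takes cohomology of $0 \to \mathcal I_{X \cup Y_1}(t) \to \mathcal I_{X\cup Y_2}(t) \to \mathcal A(t) \to 0$ to get $\dim[I_X\cap I_P^{m-1}]_t \le \dim[I_X\cap I_P^m]_t + \dim[A]_t$, and then closes with the principle that $Y_2$ cannot impose more conditions than its degree. You instead translate both hypothesis and conclusion into vanishing of $AV$-values in the same total degree $t$ (namely $AV_{X,j}(m)=0 \Rightarrow AV_{X,j+1}(m-1)=0$ with $j=t-m$), apply Theorem \ref{t. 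AV is an O-sequence} to rewrite these as $h_{R/J_j}(m-1)$ and $h_{R/J_{j+1}}(m-2)$ with $J_i = \gin(I_X):x_0^{i+1}$, note $J_{j+1}=J_j:x_0$, and conclude from the injectivity of multiplication by $x_0$ on $R/(J_j:x_0) \to R/J_j$. This is not circular, since the lemma is only used afterward (in Proposition \ref{AV_X,0(alpha)}) and Theorem \ref{t. AV is an O-sequence} does not depend on it. What your route buys is brevity and a clean general monotonicity statement, $AV_{X,j+1}(m-1)\le AV_{X,j}(m)$ (fixed degree, decreasing multiplicity), which nicely complements Corollary \ref{ineq for AV}; what the paper's route buys is independence from the gin machinery (so no change-of-coordinates or field hypotheses beyond generality of $P$) and a more transparent geometric picture of why dropping from $P^m$ to $P^{m-1}$ can lose at most $\binom{m-2+n}{n-1}$ conditions. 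Your handling of the boundary cases $m=1$ and $t=m-1$ (where the $AV$-dictionary is not available) is also fine, the latter resting on the same regularity fact for a single fat point that the paper invokes.
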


\begin{proof}
We know that  $\deg(Y_1) = \binom{m-1+n}{n}$ and $\deg(Y_2) = \binom{m-2+n}{n}$, and 
 that $Y_1$ and $Y_2$ impose independent conditions on $[R]_t$ since $t \geq m-1$ (by a regularity argument). In particular we know that 
\[
\dim [I_{Y_1}]_t = \dim [R]_t - \binom{m-1+n}{n} \ \ \hbox{ and } \ \ \dim [I_{Y_2}]_t = \dim [R]_t - \binom{m-2+n}{n}.
\]
Consider first the exact sequence
\[
0 \rightarrow I_{Y_1} \rightarrow I_{Y_2} \rightarrow A \rightarrow 0,
\]
where $A$ is the quotient, and is supported on $P$. We have
\[
\dim[A]_t = \binom{m-1+n}{n} - \binom{m-2+n}{n} = \binom{m-2+n}{n-1}.
\]
Let $\mathcal A$ be the sheafification of $A$. From the above exact sequence and the fact that $t \geq m-1$ (so $h^1(\mathcal I_P^m(t)) = h^1(\mathcal I_{Y_1}(t)) = 0$) and the fact that $I_{Y_1}$ and $I_{Y_2}$ are saturated, we get $h^0(\mathcal A(t)) = \dim [A]_t$. 

Remembering that $X$ and $P$ are disjoint, consider the exact sequence of sheaves
\[
0 \rightarrow \mathcal I_{X \cup Y_1}(t) \rightarrow \mathcal I_{X \cup Y_2}(t) \rightarrow \mathcal A(t) \rightarrow 0.
\]
Taking cohomology we obtain
\[
0 \rightarrow [I_X \cap I_P^m]_t \rightarrow [I_X \cap I_P^{m-1}]_t \rightarrow [A]_t \rightarrow \cdots
\]
Hence
\[
\begin{array}{rcl}
\dim [I_X \cap I_{Y_2}]_t & = & \dim [I_X \cap I_P^{m-1}]_t \\ \\
& \leq & \dim [I_X \cap I_P^m]_t + \dim [A]_t \\ \\
& = & \dim [I_X]_t - \binom{m-1+n}{n} + \binom{m-2+n}{n-1} \\ \\
& = & \dim [I_X]_t - \binom{m-2+n}{n}.
\end{array}
\]
But $Y_2$ cannot impose more conditions in degree $t$ than its degree, so we must have equality.
\end{proof}

Like Corollary \ref{degenerate}, the next result gives a criterion (again basically geometric), 
based on one single piece of information,
for a subvariety $X \subset \PP^n$ to admit no unexpected hypersurfaces of any degree or multiplicity at a general point.

\begin{proposition} \label{AV_X,0(alpha)}
Let $X \subset \PP^n$ be a subscheme. Let $\alpha:= \alpha(I_X)$ be the least degree $t$ such that $[I_X]_t\neq 0$.  If $AV_{X,0} (\alpha)=0$, then $X$ does not admit unexpected hypersurfaces, for any degree and multiplicity at a general point. 
\end{proposition}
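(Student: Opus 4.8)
The plan is to feed the single numerical hypothesis into Theorem~\ref{t. AV is an O-sequence} and then push it as far as possible using only elementary ideal arithmetic. Write $\alpha := \alpha(I_X)$ and $\mathfrak{m} := (x_0,\dots,x_n)$. First I would dispose of the case $\alpha = 1$: then $I_X$ contains a linear form, so $X$ lies on a hyperplane and Corollary~\ref{degenerate} already gives the conclusion. So assume $\alpha \geq 2$, and recall that $\alpha(\gin(I_X)) = \alpha$ since $h_{R/\gin(I_X)} = h_X$ (Lemma~\ref{l. adim vdim and Gin}(ii)).

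The key step is to show that $\gin(I_X) : x_0 = \mathfrak{m}^{\alpha-1}$. By Theorem~\ref{t. AV is an O-sequence}, $AV_{X,0}(\alpha) = h_{R/(\gin(I_X):x_0)}(\alpha-1)$, so the hypothesis $AV_{X,0}(\alpha)=0$ says exactly that $[\gin(I_X):x_0]_{\alpha-1} = [R]_{\alpha-1}$; since $\gin(I_X):x_0$ is an ideal, this forces $\gin(I_X):x_0 \supseteq \mathfrak{m}^{\alpha-1}$. For the reverse inclusion, note that if some nonzero $f$ lay in $[\gin(I_X):x_0]_d$ with $d \leq \alpha-2$, then $x_0 f$ would be a nonzero element of $[\gin(I_X)]_{d+1}$ with $d+1 < \alpha$, contradicting $\alpha(\gin(I_X)) = \alpha$. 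Hence $\gin(I_X):x_0$ vanishes in all degrees $<\alpha-1$ and contains $\mathfrak{m}^{\alpha-1}$, so equality holds.

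Now for any $j \geq 0$ we have $\gin(I_X):x_0^{j+1} = (\gin(I_X):x_0):x_0^{j} = \mathfrak{m}^{\alpha-1}:x_0^{j}$, and a one-line check on monomials (the colon of monomial ideals being a monomial ideal) gives $\mathfrak{m}^{\alpha-1}:x_0^{j} = \mathfrak{m}^{\max(0,\,\alpha-1-j)}$. Plugging this into Theorem~\ref{t. AV is an O-sequence},
\[
AV_{X,j}(m) = h_{R/\mathfrak{m}^{\max(0,\,\alpha-1-j)}}(m-1),
\]
which is $0$ precisely when $m+j \geq \alpha$, and equals $\binom{m-1+n}{n} > 0$ when $m+j < \alpha$. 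To conclude, take any $t \geq m \geq 1$ and set $j = t-m$. If $AV_{X,j}(m)=0$ then $\adim(X,t,m) = \vdim(X,t,m) \leq \edim(X,t,m)$; if $AV_{X,j}(m) > 0$ then $t = m+j < \alpha$, so $[I_X]_t = 0$ and $\adim(X,t,m) = 0 = \edim(X,t,m)$. In either case $\adim(X,t,m) = \edim(X,t,m)$, so there is no unexpected hypersurface of degree $t$ for multiplicity $m$; since $(t,m)$ was arbitrary and unexpectedness forces $t \geq m \geq 1$ (Definition~\ref{unexpDef}), this finishes the proof.

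There is no delicate estimate here; the whole content is the observation that the lone hypothesis at degree $\alpha$ pins $\gin(I_X):x_0$ down \emph{completely} as the power $\mathfrak{m}^{\alpha-1}$, after which the identity $\mathfrak{m}^{\alpha-1}:x_0^{j} = \mathfrak{m}^{\alpha-1-j}$ propagates the vanishing to every sequence $AV_{X,j}$ at once. The only points to watch are the two ends: the degenerate case $\alpha=1$ (handled by Corollary~\ref{degenerate}) and the range $m+j < \alpha$, where $AV_{X,j}(m)$ is positive yet causes no unexpectedness simply because $[I_X]_{m+j}=0$.
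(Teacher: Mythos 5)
Your proof is correct, and it takes a genuinely different route from the paper's. You feed the single hypothesis into Theorem \ref{t. AV is an O-sequence} with $j=0$ to get $[\gin(I_X):x_0]_{\alpha-1}=[R]_{\alpha-1}$, combine this with $\alpha(\gin(I_X))=\alpha$ to pin down $\gin(I_X):x_0=\mathfrak{m}^{\alpha-1}$ exactly, and then the identities $(\gin(I_X):x_0):x_0^{j}=\gin(I_X):x_0^{j+1}$ and $\mathfrak{m}^{\alpha-1}:x_0^{j}=\mathfrak{m}^{\max(0,\alpha-1-j)}$ yield the closed formula $AV_{X,j}(m)=h_{R/\mathfrak{m}^{\max(0,\alpha-1-j)}}(m-1)$ for all $j,m$, from which non-existence of unexpected hypersurfaces is immediate (the range $m+j<\alpha$ being harmless because $[I_X]_{m+j}=0$ there, so $\adim=\edim=0$). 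The paper instead argues region by region in the $(t,m)$-table: Corollary \ref{ineq for AV} (monotonicity in $j$) kills the row $m=\alpha$, the $O$-sequence property from Theorem \ref{t. AV is an O-sequence} propagates the zeros to $m>\alpha$, and the low-multiplicity region $m<\alpha$ is handled by the separate geometric Lemma \ref{subscheme}, which descends independent conditions from $I_P^{\alpha}$ to $I_P^{k}$ for $k<\alpha$. Your argument stays entirely inside monomial-ideal arithmetic, avoids Lemma \ref{subscheme} and Corollary \ref{ineq for AV} altogether, and buys a bit more: a complete determination of every sequence $AV_{X,j}$ under the hypothesis (equivalently, that $\gin(I_X)$ must contain $x_0\mathfrak{m}^{\alpha-1}$), whereas the paper's route isolates in Lemma \ref{subscheme} a geometric statement of independent interest. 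One minor remark: your separate treatment of $\alpha=1$ via Corollary \ref{degenerate} is fine but unnecessary, since in that case the reverse-inclusion step is vacuous and your main computation already applies.
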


\begin{proof}
By Remark \ref{equiv to unexp}, $X$ fails to admit an unexpected hypersurface 
of degree $t$ for multiplicity $m$ if and only if $T_{t,m}=\min({\adim}(X,t,m), AV_{X,t-m}(m))=0$.
Consider the table $T$ of values of $T_{t,m}$ for $t,m\geq1$. It suffices to show we always have
$T_{t,m}=0$. We will divide the table into three regions as follows:
\[
\begin{array}{cc|ccccc|ccccccccccccccccc}
& t & 1 & 2 & 3 & \dots & \alpha-1 & \alpha & \alpha +1 & \alpha+2 & \alpha+3 & \dots\\ 
m & &&&&&&&&& \\ \hline
1 & &&&&&&&&& \\ 
2 & &&&&&&&&& \\
\vdots & &&&&&&&  & {\rm\Large III}& \\
\alpha-1 & &&&&&&&&& \\ \cline{8-12}
\alpha & && && {\rm\Large I}&&&&& \\
\alpha +1 & &&&&&&&&& \\
\vdots & &&&&&&&&  {\rm\Large II} & \\

\end{array}
\]
As always, $P$ denotes a general point.

For $t < \alpha$ it is clear that $T_{t,m}={\adim}(X,t,m)=0$. 
This takes care of region I.

We now show that all entries in region II are 0. The condition $AV_{X,0}(\alpha) = 0$ gives us that $T_{\alpha, \alpha}=0$;
this the top left point of region II.
It also gives us that $Y_1$, defined by $I_P^\alpha$, imposes $\binom{\alpha-1+n}{n}$ independent conditions on $[I_X]_\alpha$.
But then by Corollary \ref{ineq for AV} we have $T_{j+\alpha,\alpha}=AV_{X,j}(\alpha) = 0$ for all $j \geq 0$ 
(this gives the top row of region~II). 

In the portion below the main diagonal of region II we have $m > t$, so $T_{t,m}={\adim}(X,t,m)$ is 0,
since there there can be no hypersurfaces with degree $t$ and multiplicity $m$ (unexpected or not). 
For the portion on and above the main diagonal of region II, 
we apply Theorem \ref{t. AV is an O-sequence}, which says that $AV_{X,j}(m)$ is an $O$-sequence. 
Recall that the top row of region II gives us $AV_{X,j}(\alpha) = 0$ for $j \geq 0$.
Then it follows for $i>0$ that $T_{\alpha+i+j,\alpha+i}=AV_{X,j}(\alpha+i) = 0$ as well, since a Hilbert function that attains a value 0 cannot subsequently become non-zero. Thus starting from each entry on the top row of region II (all of which are 0), the entries 
descending diagonally and to the right are all 0. Thus all entries in region II above the main diagonal are also 0.

We now show all entries in region III are 0.
Because $AV_{X,j}(\alpha) = 0$, the scheme $Y_1$ defined by the (saturated) ideal $I_P^\alpha$ 
imposes independent conditions on $[I_X]_t$ for all $t\geq\alpha$. Hence, applying Lemma \ref{subscheme}
iteratively, so does the scheme $Y_2$ defined by $I_P^k$ for $\alpha-1 \geq k \geq 1$.
Thus $T_{t,k}=AV_{X,t-k}(k) = 0$ for $t\geq\alpha$. Thus all entries in region III are 0, and we are done.
\end{proof}

The next proposition shows that if a subscheme $X$ admits an unexpected hypersurface of 
degree $t$ vanishing with multiplicity $m$ at a general point, then $[\gin(I_X)]_t$ is not a lex-segment. 
This gives a criterion (this time algebraic), again based on a single piece of information 
(admittedly more difficult to verify), for the non-existence of any unexpected hypersurfaces.

\begin{proposition}\label{p.lex segments are bad for unexpectedness}
	Let $X$ be a subscheme of $\PP^n$. Assume there exists $j\ge 0$ such that  
	$[\gin(I_X)]_{m+j}$ is a lex-segment and  $\hbox{adim(X,m+j,m)}>0.$ Then  
	 $AV_{X,j}(m)=0.$ In particular, if $\gin(I_X)$ is a lex-segment ideal, then $X$ does not admit any unexpected hypersurfaces.
\end{proposition}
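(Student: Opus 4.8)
The plan is to reduce everything to a statement about monomials of degree $t:=m+j$ under the lexicographic order. First I would record, exactly as in the proof of Theorem~\ref{t. AV is an O-sequence}, that
\[
AV_{X,j}(m) = \dim\left[\faktor{R}{\gin(I_X)+\mathfrak q^m}\right]_{m+j},
\]
where $\mathfrak q=(x_1,\dots,x_n)$ is the ideal of the point $Q=(1,0,\dots,0)$. Thus it suffices to prove that under the hypotheses $[\gin(I_X)+\mathfrak q^m]_t=[R]_t$, i.e.\ that every monomial of degree $t$ lies in $\gin(I_X)$ or in $\mathfrak q^m$.

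The combinatorial heart of the argument is the observation that a monomial $\mu$ of degree $t$ fails to lie in $\mathfrak q^m$ precisely when its $x_0$-exponent is at least $t-m+1=j+1$, and that --- since the lex order compares the exponent of $x_0$ first --- these are exactly the degree-$t$ monomials $\mu$ with $\mu\ge_{\lex} x_0^{j+1}x_n^{m-1}$ (note $\deg(x_0^{j+1}x_n^{m-1})=t$, and $x_n^{m-1}$ is the lex-least monomial of degree $m-1$). In other words, the degree-$t$ monomials outside $\mathfrak q^m$ are precisely the monomials of the lex-segment of degree $t$ whose least element is $x_0^{j+1}x_n^{m-1}$.

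Now I bring in the hypothesis $\adim(X,t,m)>0$. By Lemma~\ref{l. adim vdim and Gin}(i) this says $[\gin(I_X)\cap\mathfrak q^m]_t\ne 0$, so $\gin(I_X)$ contains a monomial $\mu_0$ of degree $t$ lying in $\mathfrak q^m$; such a $\mu_0$ has $x_0$-exponent $\le j$, hence $\mu_0<_{\lex}x_0^{j+1}x_n^{m-1}$. Since $[\gin(I_X)]_t$ is a lex-segment containing $\mu_0$, it contains every degree-$t$ monomial that is $\ge_{\lex}\mu_0$, in particular every degree-$t$ monomial $\ge_{\lex}x_0^{j+1}x_n^{m-1}$, which by the previous paragraph is exactly every degree-$t$ monomial not in $\mathfrak q^m$. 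Hence $[\gin(I_X)+\mathfrak q^m]_t=[R]_t$, and therefore $AV_{X,j}(m)=0$.

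For the last sentence: if $\gin(I_X)$ is a lex-segment ideal then $[\gin(I_X)]_d$ is a lex-segment in every degree, so the above applies whenever $\adim(X,m+j,m)>0$; and when $\adim(X,m+j,m)=0$ there is trivially no unexpected hypersurface of degree $m+j$ for multiplicity $m$. By Remark~\ref{equiv to unexp}, $X$ then admits no unexpected hypersurface of any degree or multiplicity. I do not expect a genuine obstacle here; the one point requiring care is the monomial characterization of ``lies outside $\mathfrak q^m$'' in terms of the lex order, which hinges on $x_0$ being the largest variable, and which I would state and verify explicitly.
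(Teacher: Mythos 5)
Your proof is correct and is essentially the paper's argument: both hinge on taking the single degree-$(m+j)$ monomial of $\gin(I_X)$ with $x_0$-exponent at most $j$ supplied by ${\adim}(X,m+j,m)>0$ and using the lex-segment hypothesis to conclude that every degree-$(m+j)$ monomial with $x_0$-exponent at least $j+1$ lies in $\gin(I_X)$. The only cosmetic difference is that you phrase the conclusion as $[\gin(I_X)+\mathfrak q^m]_{m+j}=[R]_{m+j}$, whereas the paper equivalently shows $[R/(\gin(I_X):x_0^{j+1})]_{m-1}=0$; these are the two sides of the same decomposition appearing in the proof of Theorem \ref{t. AV is an O-sequence}.
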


\begin{proof}
     Let  $j\ge 0$ be an integer such that $X$ admits a hypersurface of degree $m+j$ 
     vanishing with multiplicity at least $m$ at a general point $P$. Then by Lemma \ref{l. adim vdim and Gin}(i) 
     we have ${\adim} (X,t,m)=\dim [\gin(I_X)\cap I_Q^m]_t>0$, so
     there is a monomial $x_0^sM\in \gin(I_X)$ where $M]\in I_P^m$ so has degree $\deg M=m+j-s\geq m$, 
     but $j\geq s\geq0$.
     Now, since $[\gin(I_X)]_{m+j}$ is a lex-segment, we have $x_0^sM \in \gin(I_X)$. 
     In order to prove the statement, it is enough by Theorem \ref{t. AV is an O-sequence} to show that
 $\dim [R/(\gin(I_X):x_0^{j+1})]_{m-1}=0$. So, take any monomial 
 $F\in R_{m-1}$. Then $x_0^{j+1}F\in[R]_{m+j}$ and $x_0^{j+1}F>_{lex} x_0^sM$. Thus, $x_0^{j+1}F\in \gin(I_X)$,
 so $F\in \gin(I_X):x_0^{j+1}$, hence $[R/(\gin(I_X):x_0^{j+1})]_{m-1}=0$.
\end{proof}

 It is natural to ask if the converse of Proposition \ref{p.lex segments are bad for 
	unexpectedness} is true, namely if it is true for a finite set of points $X$ that if 
$\gin(I_X)$ is {\em not} a lex-segment ideal then $X$ must admit some sort of 
unexpected hypersurface. Recalling that one must take with a grain of salt the 
generic initial ideal produced by a computer algebra program (is the change of 
variables ``general enough?"), a counterexample is given in Example \ref{ex.root A_n}. 
It would be interesting to have a theoretical procedure to determine if the generic initial 
ideal of a finite set of points is a lex-segment ideal or not. 

\begin{example}\label{ex.root A_n} 
Assume the characteristic of the field $K$ is $0$. Let $X_n\subseteq \PP^n$ be the set of $\binom{n+2}{2}$ points obtained from the root system $A_{n+1}$ as described in \cite{HMNT}, section 3.1. Specifically, $X_n$ consists of the $\binom{n+1}{2}$ points having one entry equal to 1, one equal to $-1$ and the rest 0, together with the $n+1$ coordinate points.
	 	
		The initial degree of $I_{X_n}\subseteq K[x_0, x_1, \ldots, x_n]$ is $\alpha(I_{X_n})=3$ for $n\ge 2$, because the product of any 3 indeterminates vanishes at $X_n$  and  $[I_{X_n}]_2= (0)$. To see that $X_n$ does not lie on a quadric we take $F:=\sum c_{ab}x_ax_b\in I_{X_n}$ and we show that $F=0$. Indeed, since $F$ vanishes at the coordinate points, $c_{ab}=0$ if $a= b$; also $c_{ab}=0$ for $a\neq b$ because $F$ vanishes at the point having no zero entries at the positions $a$ and $b$.    
		
		A computer calculation, by \cocoa\ \cite{cocoa}, showed that $AV_{X_n,0}(3)=0$ for $2\le n\le 12$.  
		Therefore, by Proposition \ref{AV_X,0(alpha)}, $X_n$ does not admit any unexpected hypersurface of any sort,  for $2\le n\le 12$.
		
		This result is consistent with \cite{HMNT}, where a computer search did not turn up any unexpected hypersurfaces
		for $X_n$ in the cases $2\leq n\leq 6$, $2 \leq d \leq 6$, $2 \leq m \leq d$.
		
		Furthermore, we checked with \cocoa\ that, for $4\le n\le 12$, the ideal defining the set of points $X_n\subseteq \PP^n$ has a generic initial ideal that is not a lex-segment ideal. In particular, in the cases $5 \le n\le 12$ we noticed that $\left[\gin(I_{X_n})\right]_3$ fails to be a lex-segment because $x_1x_n^2$ fails to belong to $\gin(I_{X_n})$.
		
		Interestingly, we have checked for $3\le n\le 12$ that the set of points $Y_n\subseteq \PP^n$,  constructed from $X_n$ by replacing in the coordinates of its points all the ``-1" with ``+1," admits an unexpected cone of degree 3, and $x_0x_n^2$ fails to belong to $\gin(I_{Y_n})$.
\end{example}


\section{On the sequence $AV_{X,1}(m)$ when $X$ is an irreducible ACM curve in $\PP^3$}\label{s.irreducible ACM curve in P3}

In this section we will assume that $K$ has characteristic zero. We have already seen in 
Theorem \ref{t. AV is an O-sequence} that for a subscheme $X \subset \PP^n$, the 
sequence $AV_{X,j}$ shifted to the left by 1 is an $O$-sequence. In this section we will abuse 
terminology and just say that the sequence is an $O$-sequence, often (but not always) suppressing 
the shift. Furthermore, if the positive part of the sequence $AV_{X,j}$ is finite, we will ignore the terms 
that are zero and just say that $AV_{X,j}$ is finite. In this section our focus is on the case $j=1$.

Recall that an SI-sequence is a finite, non-zero, symmetric $O$-sequence such that the first half is a 
differentiable $O$-sequence (i.e., also the first difference of the first half is an $O$-sequence). 
The significance of SI-sequences is that they characterize the $h$-vectors of  arithmetically 
Gorenstein subschemes of projective space whose artinian reductions have the Weak Lefschetz Property. 
In codimension three they characterize the $h$-vectors of all arithmetically Gorenstein subschemes. 
Note that SI-sequences are automatically unimodal. See \cite{Hrm} for properties of SI-sequences.

We have produced a great deal of experimental evidence for the following conjecture
concerning SI-sequences. We will shortly prove part of it.
We recall that an ACM scheme is always connected \cite[Theorem 18.12]{E}, hence a smooth
ACM scheme is irreducible.

\begin{conjecture}\label{SI conj}
Let $X \subset \PP^3$ be a smooth ACM curve not lying on a quadric surface.  
Then the sequence $AV_{X,1}$ is an SI-sequence (shifted by 1).   The last non-zero term in this sequence 
is $AV_{X,1}(\deg X - 5)$, so the SI-sequence ends in degree $\deg X - 6$.
\end{conjecture}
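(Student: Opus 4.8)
The plan is to separate the statement into the portion attainable with the machinery already in hand---that $AV_{X,1}$ is a \emph{unimodal} $O$-sequence whose first half is \emph{differentiable} (this is what Theorem~\ref{part of conj} will isolate)---and the sharper assertions, namely finiteness, the exact ending degree $\deg X - 6$, and \emph{symmetry}, for which the smoothness of $X$ (and not merely its irreducibility) must be used in an essential way.

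\textbf{Reduction to $\PP^2$ and the unimodality/differentiability part.} The engine is the cohomological interpretation of Remark~\ref{cohom interp}: for an ACM curve $X\subset\PP^3$ one has $AV_{X,1}(m)=h^1(\mathcal{I}_{X\cup P^m}(m+1))$. First I would cut with a general hyperplane $H$ through $P$, obtaining the restriction sequence
\[
0\to \mathcal{I}_{X\cup P^m}(m)\to \mathcal{I}_{X\cup P^m}(m+1)\to \mathcal{I}_{\Gamma\cup mP}(m+1)\to 0 ,
\]
where $\Gamma=X\cap H$ is the general plane section of $X$ (a set of $\deg X$ points in $H\cong\PP^2$) and $mP=P^m\cap H$ is the planar fat point of multiplicity $m$. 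Since $X$ is ACM, $H^1(\mathcal{I}_X(k))=0$ for all $k$, which keeps the error terms of the induced long exact sequence under control (the surviving ``defect'' terms are governed by $H^1(\mathcal{O}_X(k))$ and by $AV_{X,0}(m)$, compared to $AV_{X,1}(m)$ via Corollary~\ref{ineq for AV}). This ties $AV_{X,1}(m)$ to the purely planar quantity $h^1_{\PP^2}(\mathcal{I}_{\Gamma\cup mP}(m+1))$, i.e.\ to an $AV$-type invariant of the zero-dimensional scheme $\Gamma$. For an irreducible curve the Uniform Position Principle forces $\Gamma$ to be in uniform position, so its $h$-vector is of decreasing type; feeding that into the Macaulay growth bounds (Theorem~\ref{macaulay thm}) and the maximal-growth analysis of $h$-vectors of points in $\PP^2$ from \cite{BGM}, and pushing the conclusions back up the hyperplane-section sequence, should yield that $AV_{X,1}$ is unimodal with differentiable initial segment. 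A more direct route is to argue inside $R/(\gin(I_X):x_0^2)$ from Theorem~\ref{t. AV is an O-sequence}: since $\depth R/\gin(I_X)=\depth R/I_X=2$ for an ACM curve, one hopes a sufficiently general linear form acts as an ``almost regular'' element on this quotient up to the degree where the colon ideal stabilizes, which is precisely differentiability of the first half (as a built-in sanity check, $AV_{X,1}(1)=\binom{3}{3}-\dim[I_X]_2=1$ because $X$ lies on no quadric, matching the value $1$ in degree $0$ required by Macaulay's theorem).

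\textbf{Finiteness and the ending degree.} Here $AV_{X,1}(m)=h^1(\mathcal{I}_{X\cup P^m}(m+1))$ is the cokernel of the evaluation map $H^0(\mathcal{I}_X(m+1))\to H^0(\mathcal{O}_{P^m})$; once $m+1$ exceeds the Castelnuovo--Mumford regularity of $X$ this map is surjective (so $AV_{X,1}(m)=0$) as soon as $\dim[I_X]_{m+1}\ge\binom{m+2}{3}$, and comparing $\dim[I_X]_{m+1}=\binom{m+4}{3}-h_X(m+1)$ with $\binom{m+2}{3}$ through the Hilbert polynomial of the curve locates the transition near $m=\deg X-5$. Pinning down the constant so that $AV_{X,1}(\deg X-5)\ne 0$ while $AV_{X,1}(\deg X-4)=0$ is where the ACM hypothesis and the precise shape of the $h$-vector of $\Gamma$ (equivalently, the regularity of $X$) must be used carefully.

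\textbf{The main obstacle: symmetry.} Since $R/I_X$ is not Gorenstein outside the complete-intersection case, the symmetry of $AV_{X,1}$ cannot be inherited from any Gorenstein property of $X$ itself; it must be a shadow of a duality that only becomes visible once $X$ is smooth. My primary attempt would be to run Serre duality on the smooth curve $X$---pairing the groups $H^1(\mathcal{O}_X(k))$ that enter the reduction above with $H^0(\omega_X(-k))$---and transport it through the hyperplane-section sequences so as to identify $AV_{X,1}(m)$ with $AV_{X,1}(m')$ for the reflected index $m'$; equivalently, one would try to show that the artinian algebra $R/(\gin(I_X):x_0^2)$ of Theorem~\ref{t. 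AV is an O-sequence} has a self-dual canonical module, i.e.\ is artinian Gorenstein. A complementary line of attack would use the structure of the minimal free resolution of a set of points in \emph{uniform} position (which carries a near-symmetry between its first and last syzygy modules) applied to $\Gamma$, and then drag that symmetry up to $AV_{X,1}$ via the reduction above. I expect the crux is exactly matching the two dual objects and reconciling the degree shift, and that this is why the statement is recorded only as a conjecture, with the unimodality-and-differentiability part proved separately.
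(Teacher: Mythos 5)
The statement you are addressing is recorded in the paper as a \emph{conjecture}: the paper itself proves only a piece of it, namely Theorem \ref{part of conj}, which gives non-vanishing, unimodality, and differentiability of the increasing part of $AV_{X,1}$, and does so assuming only that $X$ is an irreducible ACM curve (smoothness is not used there). For that piece, your sketch is close in spirit to the paper's argument: both rest on the cohomological interpretation $AV_{X,1}(m)=h^1(\mathcal I_{X\cup P^m}(m+1))$ of Remark \ref{cohom interp} and on restricting to a general plane $H$ through $P$. The decisive planar input in the paper, however, is not uniform position or decreasing type of the $h$-vector of $\Gamma=X\cap H$; it is that $\Gamma$ is in \emph{linearly general} position (no three collinear, since $X$ is irreducible), hence by \cite[Corollary 6.8]{CHMN} admits no unexpected plane curves, which is exactly the maximal-rank statement for the restriction map $r_{m+2}$; the Snake Lemma then transfers maximal rank to $\alpha_{m+1}$ and gives unimodality, and a direct Macaulay-expansion computation of $h_{R/I_Z}(t)-(2t+1)$ gives differentiability. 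Your alternative route via ``decreasing type $+$ Macaulay $+$ \cite{BGM}'' does not obviously produce the maximal-rank statement, and your proposed shortcut through an almost-regular element on $R/(\gin(I_X):x_0^2)$ is not carried out in the paper and is not established by what you write.

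The genuine gap is in the parts that make the statement a conjecture: finiteness, the ending degree $\deg X-6$, and symmetry. Your finiteness argument --- that once $m+1$ exceeds the Castelnuovo--Mumford regularity of $X$ the evaluation map $H^0(\mathcal I_X(m+1))\to H^0(\mathcal O_{P^m})$ is surjective as soon as the dimension count permits --- is not merely incomplete but would fail: Example \ref{hyp not hold}(j) in the paper exhibits an \emph{irreducible ACM} (but singular) curve, necessarily of finite regularity, whose $AV_{X,1}$ sequence is $(1,4,8,12,15,16,15,12,8,4,2,2,2,\dots)$ and never becomes zero; parts (h) and (i) give further ACM examples with non-finite $AV_{X,1}$. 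Since the twist $m+1$ grows together with the multiplicity $m$, regularity alone controls nothing here, and any finiteness (or ending-degree) argument that does not use smoothness of $X$ in an essential way cannot succeed --- the paper itself notes that nothing in its proof of Theorem \ref{part of conj} fails for (i) or (j), which is precisely why finiteness and symmetry remain open. Your treatment of symmetry (Serre duality on $X$, or Gorenstein-ness of $R/(\gin(I_X):x_0^2)$, or near-symmetry of the resolution of a uniform-position $\Gamma$) is candidly labeled as speculative, and indeed no such identification is made in the paper; so the proposal should be read as a partial reproduction of Theorem \ref{part of conj} plus a research plan, not as a proof of the conjecture, and the finiteness step as written is wrong as stated.
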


 \begin{remark}
 We point out two things. First, 
we have not yet seen a direct connection to Gorenstein algebras; we have only this numerical conjecture. It would be very interesting to tie Gorenstein algebras to the study of unexpected hypersurfaces in some way. Second, 
 sometimes the SI-sequences that we obtain have codimension three and sometimes codimension four.
 \end{remark}
 
 The case where $X$ {\em does} lie on a quadric surface is contained in Theorem \ref{part of conj}.
We first give examples to show that all of the other assumptions are needed in this conjecture. 

\begin{example} \label{hyp not hold}
The symmetry of the sequence $AV_{X,1}(m)$ requires all of the given assumptions. 
The following examples, which were run in either \cocoa\ \cite{cocoa} or Macaulay2 \cite{GS}, show that dropping a hypothesis can
result in the AV sequence either not being finite, or, if finite, not being symmetric.

\begin{itemize}

\item[(a)] ({\em $X$ satisfies all assumptions.}) Assume that  $X_1$ is a line in $\PP^3$ 
and $X$ is obtained as the residual to $X_1$ by two general cubic surfaces containing $X_1$. 
Then $X$ is a smooth ACM curve of degree 8 and genus 7, and  the positive part of the sequence 
$AV_{X,1}$ is $(1,2,1)$. This satisfies Conjecture \ref{SI conj}, and in particular it is symmetric
with its last value being in degree $8-6=2$.

\item[(b)] ({\em $X$ is not ACM}.) Assume that $X_1$ is the disjoint union of two lines in 
$\PP^3$ and $X$ is linked to $X_1$ by a general choice of two cubic surfaces. 
Then $X$ is a smooth, non-ACM curve of degree 7 and genus 4, and the positive part of  
the sequence $AV_{X,1}$ is $(1,2)$, which is not symmetric.

\item[(c)] ({\em $X$ is not in $\PP^3$}.) Assume that $X$ is a smooth surface of 
degree 8 in $\PP^4$ obtained from a plane by linking using two general hypersurfaces of 
degree 3. (The example in (a) is a hyperplane section of this one.) Then the positive part of  
the sequence $AV_{X,1}$ is $(1,3,4,4,\dots)$, which is not finite. (Note that its first difference is the sequence in (a).)

\item[(d)] ({\em $X$ is not equidimensional, but the curve part is ACM}.) Assume that $X$ is the 
residual in $\PP^3$ of a line inside the complete intersection of two cubics (as in 
(a)). Let $Y$ be the union of $X$ with a general point. Then the positive part of  $AV_{Y,1}$ is the sequence $(1,3,2)$,
which is not symmetric.

\item[(e)] ({\em $j = 0$ instead of $j=1$}.) Assume that $X$ is the curve in (a) but take $j=0$. 
Then the positive part of $AV_{X,0}$ is the sequence $(1,    4,    8,   11,   13,   14,   14,   14, \dots)$,
which is not finite.

\item[(f)] ({\em $j = 2$ instead of $j = 1$}.) Assume that $X$ is the curve in (a) but take $j=2$. 
Then $AV_{X,2}$ is the sequence $(0,0,\dots)$, and so is finite and vacuously symmetric, but does not end in the conjectured degree. However, linking the line 
$X_1$ from (a) using two surfaces of degree 4 gives a curve $X$ of degree 15 and genus 28, with the positive part of $AV_{X,2}$ being the sequence $(1,2,2)$,
which is not symmetric.

\item[(g)] ({\em $X$ is zero-dimensional}.) Assume that $X$ is the complete intersection in $\PP^3$ 
of three general quartic surfaces. Then the positive part of the sequence $AV_{X,1}$ is $(1,  4, 7, 8, 5)$
which is not symmetric.

\item[(h)] ({\em $X$ is ACM but not irreducible, 1}.) Let $Z \subset \PP^2$ be the set of 9 points 
coming from the $B_3$ root system (see \cite[Figure 2]{DIV} and \cite{HMNT}). Let $X$ be the cone over $Z$ with vertex 
at a general point $Q$. Then $X$ is an ACM union of lines (whose general hyperplane section admits an 
unexpected quartic curve), and the positive part of the sequence $AV_{X,1}$ is $(1,3,4,4,4,\dots)$, which is not finite.

\item[(i)] ({\em $X$ is ACM but not irreducible, 2}). Let $X$ be as in (h), except now let $Z \subset \PP^2$ be a general 
set of 9 points. The Hilbert function of $Z$ (and hence $X$) is the same as it was in (h), but now one computes that 
the positive part of the sequence $AV_{X,1}$ is $(1,3,3,3,\dots)$, which is not finite. (Thus the AV sequence depends on
geometry beyond the Hilbert function.)

\item[(j)] ({\em $X$ is ACM and irreducible but not smooth}). Let $Q$ be a general point and 
let $X$ be the complete intersection of two general quartic surfaces in $I_Q^3$. Then $X$ is 
ACM and we have verified that $X$ is irreducible (using Macaulay2), but the positive part of the 
sequence $AV_{X,1}$ is $(1, 4, 8, 12, 15, 16, 15, 12, 8, 4, 2, 2, 2, \dots)$, which is not finite. (Thus we need 
smoothness and not simply irreducibility in the statement of the conjecture.)

\end{itemize}
\end{example}

While we are not able to prove the full conjecture, we at least show  unimodality and the 
differentiability of the increasing part, using only irreducibility of $X$ and not necessarily 
smoothness. What is missing is the finiteness, the symmetry and the degree of the last 
positive term in the sequence when $X$ is smooth. Notice that nothing about our 
argument fails for (i) or (j) in Example \ref{hyp not hold}, so something more will be 
needed to prove the rest of the conjecture.

\begin{theorem} \label{part of conj} Let $X \subset \PP^3$ be an irreducible ACM curve. 

\begin{itemize}

\item[(a)] If $X$ lies on a quadric surface, then for each $j\geq1$ the sequence $AV_{X,j}(m)$ is zero.

\item[(b)] If $X$ does not lie on a quadric surface then the sequence $AV_{X,1}(m)$ is 
non-zero and unimodal. Furthermore, the increasing part is a differentiable $O$-sequence.

\end{itemize}
\end{theorem}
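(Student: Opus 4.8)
The plan is to reduce everything to the cohomological interpretation of $AV_{X,1}$ given in Remark \ref{cohom interp}, namely that for an ACM curve $X\subset\PP^3$ one has $AV_{X,1}(m)=h^1(\mathcal I_{X\cup P^m}(m+1))$, together with standard liaison/residuation facts about fat points on a curve. For part (a), if $X$ lies on a quadric surface $Q$, then $I_X$ contains a degree-$2$ element, so $x_0^2\in\gin(I_X)$, hence $1\in\gin(I_X):x_0^{j+1}$ for every $j\geq1$; by Theorem \ref{t. AV is an O-sequence} the shifted sequence $AV_{X,j}$ is the Hilbert function of $R/(\gin(I_X):x_0^{j+1})=R/R=0$, so $AV_{X,j}(m)=0$ for all $m$. (This is essentially Corollary \ref{degenerate} with $d=1$.)

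For part (b) I would argue as follows. First, since $X$ does not lie on a quadric, $\alpha(I_X)\geq3$, and I want to show $AV_{X,1}(m)$ is non-zero for some $m$; a natural candidate is a small value of $m$ where one can exhibit a cone or use that an irreducible non-degenerate ACM curve of codimension $2$ in $\PP^3$ has an unexpected cone (as recalled in the Remark after \ref{equiv to unexp}, citing \cite{HMNT} Proposition 2.4), combined with Corollary \ref{ineq for AV} to push down from $j=0$ to the relevant comparison — though more carefully I would use the interpretation $AV_{X,1}(d+1)=h_{R/(\gin(I_X):x_0^2)}(d)$ and the fact that $(\gin(I_X):x_0^2)$ is a proper ideal (since $x_0^2\notin\gin(I_X)$, as $\alpha(I_X)\geq3$ forces $x_0,x_0^2$ not to be in $\gin(I_X)$), so its Hilbert function starts $1,\dots$ and is non-zero. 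Next, for unimodality and differentiability of the increasing part, I would use the cohomological reading: set $Z_m=X\cup P^m$, and compare $h^1(\mathcal I_{Z_m}(m+1))$ with $h^1(\mathcal I_{Z_{m+1}}(m+2))$. The key device is to restrict to a general hyperplane $H$ through $P$: writing $H\cap X$ as a general hyperplane section $\Gamma$ of the curve (a set of $\deg X$ points in $\PP^2$ in general position, in particular with maximal rank / uniform position since $X$ is irreducible and $K$ has characteristic zero, by the Uniform Position Principle), and $P^m\cap H=mP$, one gets an exact sequence on $\PP^3$ relating $\mathcal I_{Z_m}$ in consecutive twists to the ideal sheaf of $\Gamma\cup mP$ in $\PP^2$. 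Since $X$ is ACM, $h^1(\mathcal I_X(k))=0$ for all $k$, which makes these restriction sequences behave well and lets me convert statements about $h^1(\mathcal I_{Z_m}(m+1))$ into statements about the Hilbert function of the planar scheme $\Gamma\cup mP\subset\PP^2$. Then I would invoke the structure theory for fat points imposed on a general set of points on a curve (this is where results in the spirit of \cite{BGM}, or a direct argument using that $\Gamma$ has the Hilbert function of a general set of $\deg X$ points because $X$ is ACM, come in): the relevant differences $AV_{X,1}(m)-AV_{X,1}(m-1)$ are expressible via first differences of such Hilbert functions, which are themselves $O$-sequences, giving differentiability of the increasing part; and the eventual decrease is forced by a Riemann–Roch / regularity estimate showing $h^1(\mathcal I_{Z_m}(m+1))$ must start decreasing once $m$ is large relative to $\deg X$, with no second rise because once the relevant planar Hilbert function catches up it stays caught up.

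Concretely the steps are: (1) dispatch part (a) via $\gin$; (2) show non-vanishing in part (b) from $\alpha(I_X)\geq 3$ and Theorem \ref{t. AV is an O-sequence}; (3) set up the hyperplane-restriction exact sequences for $Z_m=X\cup P^m$, using ACM-ness of $X$ to kill the obstruction $H^1(\mathcal I_X)$; (4) translate $AV_{X,1}(m)$ into the failure of $\Gamma\cup mP\subset\PP^2$ to impose independent conditions in the appropriate degree, where $\Gamma$ is a general hyperplane section of $X$; (5) use that $\Gamma$ behaves like a general set of $\deg X$ points (uniform position, characteristic zero) to control these numbers; (6) deduce that the first difference of the increasing part is an $O$-sequence (differentiability) and that the sequence is unimodal by a regularity bound preventing a second increase.

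The main obstacle I expect is step (5)–(6): controlling precisely how a fat point $mP$ of growing multiplicity imposes conditions on forms through a general hyperplane section $\Gamma$ of an irreducible ACM curve — in particular proving there is no "second bump" in $AV_{X,1}$ — since this is exactly the place where the full Conjecture \ref{SI conj} (finiteness and symmetry) would need smoothness, and where the authors themselves note (via Example \ref{hyp not hold}(i),(j)) that irreducibility alone is not enough for the stronger conclusions; so the argument must be calibrated to extract only unimodality and the differentiability of the rising part, not more.
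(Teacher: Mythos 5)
Your part (a) and your non-vanishing argument for part (b) are fine, and in fact match the paper: (a) is Corollary \ref{degenerate} with $d=1$, and the observation that $\alpha(I_X)\geq 3$ forces $x_0^2\notin\gin(I_X)$, hence $AV_{X,1}(1)=h_{R/(\gin(I_X):x_0^2)}(0)=1$, is a perfectly good (and clean) way to get a non-zero first term. The genuine gap is exactly where you say you expect trouble: unimodality and the differentiability of the increasing part are asserted, not proved. The paper's proof hinges on one specific input that your sketch never isolates: since $X$ is irreducible, the general hyperplane section $Z=X\cap H$ is in \emph{linearly general position} (no three collinear), and by \cite{CHMN} Corollary 6.8 such a planar set admits no unexpected curves; this says precisely that the restriction maps $r_{m+2}\colon H^0(\mathcal I_Z(m+2))\to H^0(\mathcal O_{(m+1)P}(m+2))$ have maximal rank for every $m$. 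Feeding this into the commutative diagram obtained from multiplication by $L$ (using $I_{X\cup P^{m+1}}:L=I_{X\cup P^m}$, and ACM-ness of $X$ for the surjectivity $[I_X]_{m+2}\to[I_Z]_{m+2}$) and applying the snake lemma shows that the induced maps $\alpha_{m+1}\colon H^1(\mathcal I_{X\cup P^m}(m+1))\to H^1(\mathcal I_{X\cup P^{m+1}}(m+2))$ also have maximal rank, hence are first injective and then surjective; that is what forbids a second bump. Your substitute ("once the planar Hilbert function catches up it stays caught up", plus a Riemann--Roch/regularity estimate) is exactly the statement that needs proof, and eventual decrease alone does not give unimodality. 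Note also a factual slip: $Z$ does \emph{not} have the Hilbert function of a general set of $\deg X$ points (its Hilbert function is the $h$-vector of $X$); what irreducibility actually buys, via uniform position, is linear general position, which is all that Corollary 6.8 of \cite{CHMN} needs.

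The differentiability step has a second gap: you argue that the relevant differences are "first differences of Hilbert functions, which are themselves $O$-sequences", but first differences of $O$-sequences are in general not $O$-sequences (that failure is precisely why differentiability is a nontrivial condition in the definition of an SI-sequence). The paper instead computes, in the increasing range, $\dim\coker(r_{m+2})=h_{R/I_Z}(m+2)-\bigl(2(m+2)+1\bigr)$ and proves this is an $O$-sequence by a Macaulay-expansion argument: since $2t+1=\binom{t+1}{t}+\binom{t}{t-1}$, the $s$-binomial expansion of $h_{R/I_Z}(t)-(2t+1)$ (with $s=t-2$) is obtained from the $t$-binomial expansion of $h_{R/I_Z}(t)$ by deleting its two leading terms, so Macaulay's bound for $h_{R/I_Z}$ transfers. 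Some concrete argument of this kind is needed; without it, and without the maximal-rank/snake-lemma mechanism above, steps (5)--(6) of your outline remain unproved.
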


\begin{proof}	
(a) This follows by Corollary \ref{degenerate}.

(b) We assume that 
	
	\begin{itemize}
		\item $P \in \PP^3$ is a general point with defining ideal $I_P$, 
		
		\item $H$ is a general plane in $\PP^3$ containing $P$,
		
		\item $L {\color{blue} \in I_P}$ is a linear form defining $H$,
		
		\item $Z = X \cap H$.
		
	\end{itemize}
	
	\noindent 
	Note that $P$ may be taken to be a general point in $H$ with respect to the set $Z$.  
	In addition to the notation introduced in Remark~\ref{cohom interp}, recall from Notation \ref{Pm vs mP} that
	\begin{quotation}
		{\em we will denote the scheme defined by $I_{P|H}^m$ by $mP$, a fat point in the 
		plane, to distinguish it from the fat point scheme $P^m$ in $\PP^3$ defined by $I_{P}^m$.} 
	\end{quotation}
	Notice that 
	\[
	I_{X \cup P^{m+1}} : L = I_{X \cup P^m} \ \ \ \hbox{ and } \ \ \ (I_{X\cup P^m} + (L))^{sat} = I_{Z \cup mP}.
	\]
	Since $Z = X \cap H$ is a general hyperplane section of $X$, and $X$ is irreducible, $Z$ is 
	a set of points in linearly general position in $H$ (in this case meaning no three points of $Z$ 
	are collinear). Hence by \cite{CHMN} Corollary 6.8, $Z$ does not admit any unexpected 
	curves in the plane. Considering the exact sequence of sheaves 
	\[
	0 \rightarrow \mathcal I_{Z \cup (m+1)P} (m+2) \rightarrow \mathcal I_Z (m+2) \stackrel{r_{m+2}}{\longrightarrow} \mathcal O_{(m+1)P} (m+2) \rightarrow 0
	\]
	from Remark \ref{cohom interp} (where the ideal sheaves are on $H = \PP^2$), the fact that $Z$ 
	does not admit any unexpected curves means that $r_{m+2}$ has maximal rank on 
	global sections. (For some values of $m$ it will be injective, and eventually it will be surjective.)
	
	Now consider the commutative diagram of sheaves (the rows are exact since $P\not\in X$ and the columns are exact since $X$ is irreducible):
	\[
	\begin{array}{cccccccccccccccc}
	&& 0 && 0 && 0 \\
	&& \downarrow && \downarrow && \downarrow \\
	0 & \rightarrow & \mathcal I_{X \cup P^m}(m+1) & \rightarrow & \mathcal I_X (m+1) & \rightarrow & \mathcal O_{P^m}(m+1) & \rightarrow & 0 \\
	&& \phantom{{ \times L}} {\Big \downarrow} {{ \times L}} && \phantom{{ \times L}} {\Big \downarrow} {{ \times L}} && \phantom{{ \times L}} {\Big \downarrow} {{ \times L}} \\
	0 & \rightarrow & \mathcal I_{X \cup P^{m+1}}(m+2) & \rightarrow & \mathcal I_X (m+2) & \rightarrow & \mathcal O_{P^{m+1}}(m+2) & \rightarrow & 0 \\
	&& \downarrow && \downarrow && \downarrow \\
	0 & \rightarrow & \mathcal I_{Z \cup (m+1)P}(m+2) & \rightarrow & \mathcal I_Z(m+2) & \rightarrow & \mathcal O_{(m+1)P}(m+2) & \rightarrow & \phantom{.} 0 .\\
	&& \downarrow && \downarrow && \downarrow \\
	&& 0 && 0 && 0 
	\end{array}
	\]
	In cohomology we obtain
	{\footnotesize
		\[
		\begin{array}{cccccccccccccccc}
		&& 0 && 0 && 0 \\
		&& \downarrow && \downarrow && \downarrow \\
		0 & \rightarrow & [ I_{X \cup P^m}]_{m+1} & \rightarrow & [ I_X ]_{m+1}& \rightarrow & H^0(\mathcal O_{P^m}(m+1)) & \rightarrow & H^1(\mathcal I_{X \cup P^m}(m+1)) & \rightarrow & 0 \\
		&& {\big \downarrow} && {\big \downarrow} && {\big \downarrow} && \phantom{\alpha_{m+1}} {\big \downarrow} \alpha_{m+1} \\
		0 & \rightarrow & [ I_{X \cup P^{m+1}}]_{m+2} & \rightarrow & [ I_X ]_{m+2}& \rightarrow & H^0(\mathcal O_{P^{m+1}}(m+2)) & \rightarrow & H^1(\mathcal I_{X \cup P^{m+1}}(m+2)) & \rightarrow & 0 \\
		&& \downarrow && \downarrow* && \downarrow && \downarrow \\
		0 & \rightarrow & [ I_{Z \cup (m+1)P}]_{m+2} & \rightarrow & [ I_Z ]_{m+2}& \stackrel{r_{m+2}}{\longrightarrow} & H^0(\mathcal O_{(m+1)P}(m+2)) & \rightarrow & 
		\coker  (r_{m+2}) & \rightarrow & \phantom{.} 0  \\
		&&&& \downarrow && \downarrow && \downarrow \\
		&&&& 0 && 0 && 0
		\end{array}  
		\]  }
using the fact that $X$ is ACM for the first vertical surjection (marked by an asterisk).
		
To begin, we focus only on the first line of the above commutative diagram. For $m=0$ we obtain
\[
h^1(\mathcal I_{X\cup P^0} (1)) = h^1(\mathcal I_{X} (1)) = 0
\]
since $X$ is ACM. For $m=1$ we obtain
\[
h^1(\mathcal I_{X \cup P}(2)) = 
\left \{
\begin{array}{ll}
0 & \hbox{if $\dim [I_X]_2 \geq 1$} \\
1 & \hbox{if $\dim [I_X]_2 = 0$.}
\end{array}
\right.
\]
Since, by Remark \ref{cohom interp} we have $h^1(\mathcal I_{X \cup P^m}(m+1)) = AV_{X,1}(m)$, 
and we know the latter is an $O$-sequence shifted by one thanks to Theorem \ref{t. AV is an O-sequence}, 
this gives that the start of the shifted $O$-sequence (the value 1) corresponds to $h^1(\mathcal I_{X \cup P}(2))$. 
Thus the sequence is non-zero since $X$ does not lie on a quadric surface, so we have the first part of (b).
(If we had merely assumed that $X$ is not degenerate, this also shows that the sequence is non-zero if and only if $X$ 
does not lie on a quadric surface.)

So for the rest of the proof we assume that $X$ does not lie on a  quadric surface.
Applying the Snake Lemma to the above commutative diagram, the fact that $r_{m+2}$ has 
maximal rank means that also $\alpha_{m+1}$ has maximal rank (in the right-hand column of 
the  diagram). Then applying Remark \ref{cohom interp} as $m$ varies, since $r_{m+2}$ is initially 
injective and then surjective, the same is true for the map $\alpha_{m+1}$.  This shows that the 
sequence $AV_{X,1}(m)$ is  unimodal.

Finally, we argue that the increasing part of the sequence $\{h^1(\mathcal I_{X \cup P^m}(m+1))\}$ is a 
differentiable $O$-sequence (shifted by 2).

Thanks to  Theorem \ref{t. AV is an O-sequence} and Remark \ref{cohom interp}, we know 
that the sequence $\{ h^1(\mathcal I_{X \cup P^m}(m+1)) \}$ is an $O$-sequence, and we have 
just seen that the map $\alpha_{m+1}$ is injective as long as $\coker (r_{m+2})$ is non-zero. So 
we have only to show that $\dim ( \coker (r_{m+2}))$ is an $O$-sequence (shifted by~2). But 
because $r_{m+2}$ has maximal rank, when $\coker (r_{m+2})$ is non-zero we have from the 
exactness of the bottom row of the above diagram that
\[
\begin{array}{rcl} 
\displaystyle \dim [ \coker (r_{m+2}) ]_{m+2} & = & \displaystyle  \binom{(m+1)-1+2}{2} - \dim [I_Z]_{m+2} \\ \\
& = & h_{R/I_Z}(m+2) -  [ 2(m+2) + 1 ] 
\end{array}
\]
as long as this number is positive. 

So for $t \geq 2$ {\em we want to show that 
\[
k_{t-2} := h_{R/I_Z}(t) - (2t+1) 
\]
is an $O$-sequence as long as it is positive}. Let $s = t-2$, so we want to show that 
$k_s$ is an $O$-sequence for $s \geq 0$.
Since $X$ does not lie on a quadric surface, $Z$ does not lie on a conic (since $X$ is ACM). 
Thus $k_0 = \dim [ \coker (r_2) ]_2 = 6-5 = 1$. 

Now assume that $t \geq 3$, so $s \geq 1$. As long as $h_{R/I_Z}(t) = \binom{t+2}{2}$ 
(i.e., before $I_Z$ begins) this difference is $k_s = \binom{s+2}{2}$, which is an $O$-sequence. So 
assume $h_{R/I_Z}(t) < \binom{t+2}{2}$. Consider the $t$-binomial expansion of $h_{R/I_Z}(t)$. Since
\[
2t+1 < h_{R/I_Z}(t) < \binom{t+2}{2},
\]
we have 
\[
h_{R/I_Z}(t) = \binom{t+1}{t} + \binom{t}{t-1} + (\hbox{terms in degrees $\leq t-2$})
\]
and 
\[
(2t+1) = \binom{t+1}{t} + \binom{t}{t-1}.
\]
Hence $k_s = h_{R/I_Z}(t) - (2t+1)$ has an $s$-binomial expansion coming directly from the 
$t$-binomial expansion of $h_{R/I_Z}(t)$, by removing the first two binomial coefficients. 
But $h_{R/I_Z}$ is an $O$-sequence, so it obeys Macaulay's bound (Theorem \ref{macaulay thm}). 
Thus $k_s$ does as well, and so is an $O$-sequence.
\end{proof}


\section{Codimension 2 complete intersections in $\PP^n$} \label{ci section}

In this section we apply results of Section \ref{s. gin and unexp} to the case of complete 
intersections of codimension 2 in $\PP^n$. In particular, as introduced in 
Remark \ref{r: partial elimination ideal}, we get partial information on the actual dimension 
for a complete intersection of codimension 2 in a certain degree by using the theory of \textit{partial elimination ideals}. 
(See \cite{Gr} for background on partial elimination ideals and for Sylvester matrices, which appear
in the proof of Proposition \ref{p. codimension 2 complete intersection}.)
Complete intersections of codimension 2  have been investigated in several papers. Indeed, 
the following result can be deduced from Proposition 6.8 in \cite{Gr} and  Corollary 3.9 in \cite{CoS}.  
We include here a proof for completeness of the exposition and to show explicitly how the 
partial elimination theory affects the existence of  unexpected hypersurfaces.  
 
Recall that  $R = K[x_0,x_1,\ldots,x_n]$ denotes a standard graded polynomial ring and the monomials of $R$ are ordered by $>_{lex}$, the 
lexicographic monomial order which satisfies $x_0> x_1 > \cdots > x_n$.

\begin{proposition}\label{p. codimension 2 complete intersection}
	Let $C\subseteq \PP^n$ be a codimension 2 complete intersection. 
	Assume $C$ is defined by two, sufficiently general, forms of degree $a,b$ respectively. Say $a\le b$.
	Set, for any integer $0\le j< a$,
\[
\begin{array}{rcl}
t & := & (a-j)(b-j)+j \hbox{ and } \\ 
m& := & (a-j)(b-j).
\end{array}
\]
Then
	\[
	{\adim} (X,t,m)> 0.
	\]
\end{proposition}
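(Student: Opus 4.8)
The plan is to reduce the statement to exhibiting a single nonzero form of degree $t$ in $I_C\cap I_Q^m$ for the distinguished point $Q=(1:0:\cdots:0)$, and then to produce that form as a subresultant of the two defining equations of $C$. Throughout set $j:=t-m$, so that $0\le j<a$, $m=(a-j)(b-j)$ and $t=m+j$. Since $\adim(C,t,m)=\dim[I_C\cap I_P^m]_t$ for general $P$ is intrinsic to $C$ up to a projective change of coordinates, and $F,G$ are general, I would first arrange that $Q$ itself is a general point for $C$ (replace $C$ by a general projective image, still a general codimension-$2$ complete intersection of type $(a,b)$; equivalently, use Lemma~\ref{l. adim vdim and Gin}(i), which gives $\adim(C,t,m)=\dim[\gin(I_C)\cap I_Q^m]_t=\dim[\widetilde K_j(\gin(I_C))]_t$ as in Remark~\ref{r: partial elimination ideal}, together with the identity $\gin(I_C)=\mathrm{in}(I_{C'})$ from the proof of that lemma). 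Then $\adim(C,t,m)=\dim[I_C\cap I_Q^m]_t$, and since $I_Q=(x_1,\dots,x_n)$, a degree-$t$ form lies in $I_Q^m$ exactly when all of its monomials have $x_0$-exponent at most $t-m=j$; thus $[I_C\cap I_Q^m]_t=[\widetilde K_j(I_C)]_t$ is precisely the space of degree-$t$ elements of $(F,G)$ of $x_0$-degree $\le j$, and it suffices to find one nonzero such element.

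Next I would write $F=\sum_{i=0}^a f_ix_0^i$ and $G=\sum_{i=0}^b g_ix_0^i$ with $f_i,g_i\in K[x_1,\dots,x_n]$ homogeneous of degrees $a-i$ and $b-i$; for general $F,G$ the coefficients $f_a,g_b$ are nonzero constants, so $F,G$ have $x_0$-degrees exactly $a,b$. Let $S_j:=\mathrm{Sres}_j(F,G;x_0)$ be the $j$-th subresultant of $F$ and $G$ with respect to $x_0$. By construction $S_j=u_jF+v_jG$ with $\deg_{x_0}u_j<b-j$ and $\deg_{x_0}v_j<a-j$, its coefficients are (up to sign) maximal minors of the Sylvester matrix of $F$ and $G$, and its leading $x_0$-coefficient is the $j$-th principal subresultant coefficient $\mathrm{sr}_j$. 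Hence $S_j\in(F,G)=I_C$ and $\deg_{x_0}S_j\le j$, so writing $S_j=\sum_{s\le j}x_0^sh_s$ with $h_s$ a form of degree $t-s\ge m$ in $x_1,\dots,x_n$, every monomial of $S_j$ has $x_0$-exponent $\le j$; therefore $S_j\in[I_C\cap I_Q^m]_t$ once we know that $S_j$ is homogeneous of degree $t$ and $S_j\ne 0$. Homogeneity of degree $(a-j)(b-j)+j=t$ follows by the same bookkeeping of the Sylvester determinant that shows, with the weights $\deg f_i=a-i$ and $\deg g_i=b-i$, that $\mathrm{Res}(F,G;x_0)$ is homogeneous of degree $ab=(a-0)(b-0)$ and that $\mathrm{sr}_j$ is homogeneous of degree $(a-j)(b-j)=m$. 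Nonvanishing of $S_j$ for general $F,G$ follows because $\mathrm{sr}_j$ is a polynomial in the entries $f_i,g_i$ that is not identically zero (a classical nondegeneracy property of subresultants), hence is nonzero on a dense open set of pairs $(F,G)$. Combining this with the reduction above gives $\adim(C,t,m)\ge 1>0$.

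The main obstacle is the grading bookkeeping: one must verify carefully that, in the multigraded/projective setting with $\deg f_i=a-i$ and $\deg g_i=b-i$, the \emph{full} $j$-th subresultant polynomial $S_j$ (not merely $\mathrm{sr}_j$) is homogeneous of degree exactly $t$, and that $\mathrm{sr}_j$ does not vanish identically as a function of the coefficients of $F$ and $G$. Both are classical but a little delicate; alternatively they may be read off from Green's description of the partial elimination ideals $\widetilde K_j(\gin(I_C))$ (\cite[Prop.~6.8]{Gr}) and from \cite[Cor.~3.9]{CoS}, where $S_j$ is interpreted via the $(j+1)$-fold point locus of the projection of $C$ from a general point, which for a general complete intersection of type $(a,b)$ is a hypersurface of degree $(a-j)(b-j)$. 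The remaining genericity is routine: the open conditions invoked — that $(F,G)$ defines a codimension-$2$ complete intersection, that $Q$ is a general point for it, and that $\mathrm{sr}_j(F,G)\ne 0$ — are each dense, so sufficiently general $F,G$ satisfy all of them simultaneously.
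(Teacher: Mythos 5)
Your proposal is correct and follows essentially the same route as the paper: the paper's matrix $M$ and the cofactor combination $T=\big(\sum_i M_i x_0^{i-1}\big)F+\big(\sum_i M_{b-j+i}x_0^{i-1}\big)G$ is exactly the $j$-th subresultant $S_j$ you invoke, with $\det(M)=\mathrm{sr}_j$ homogeneous of degree $m=(a-j)(b-j)$ as the coefficient of $x_0^j$ and all higher $x_0$-coefficients vanishing. The paper likewise reduces to the distinguished point $Q=(1,0,\dots,0)$ after a general change of coordinates via Lemma \ref{l. adim vdim and Gin} and the partial elimination ideal interpretation of Remark \ref{r: partial elimination ideal}, citing the same sources (Green and Conca--Sidman) for the facts you defer to.
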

\begin{proof}
	After a general change of coordinates, say
	\[
	\begin{array}{ccc}
	F &=& f_a+f_{a-1}x_0+f_{a-2}x_0^2+\cdots +f_{1}x_0^{a-1}+ f_0  x_0^a\\
	G &=& g_b+g_{b-1}x_0+g_{b-1}x_0^2+\cdots+ g_1x_0^{b-1}+ g_0 x_0^b\\
	\end{array}
	\]
	and define $J:=(F,G)$ to be the ideal generated by $F$ and $G$.
	Let $M$ be the following matrix of size $(a+b-2j)\times (a+b-2j)$: 
	{\tiny\[M:=\left(\begin{array}{ccccccccccccc}
		f_{a-j} & f_{a-j-1} & \cdots & f_1 & f_{0} & 0 &  \cdots  & 0 \\ 
		f_{a-j+1}   & f_{a-j} & \cdots & f_{2} & f_{1} & f_0 & \cdots  &  0\\ 
		\vdots & \vdots& \ddots \\
		f_{a} & f_{a-1} & \cdots &          &             &    & \\
		0 & f_a& \\
		\vdots& \vdots \\
		0 & 0  &   \cdots    &&&&\cdots & f_0\\
		g_{b-j} & g_{b-j-1} & \cdots &   &  &  &  \cdots &  0  \\ 
		g_{b-j+1}   & g_{b-j} & \cdots &  &  & & \cdots &  0\\ 
		\vdots & \vdots & \ddots\\
		g_{b } & g_{b-1} & \cdots &          &           &    & \\
		0 & g_b& \cdots \\
		\vdots & \vdots\\
		0 & 0 & \cdots      &&&& \cdots   & g_0\\\
		\end{array}\right).
		\]} 
(Note that the determinant of the matrix $M$ is one of the minors of the Sylvester matrix $Syl(F,G,x_0)$ of 
$F$ and $G$, and these minors belong to the partial elimination ideal $K_{j}(J)$. See in 
particular Corollary 3.9 in \cite{CoS} and Proposition 6.9 and the following Remark in \cite{Gr}.) 
One can check that $\det(M)$ is homogeneous and  it has degree $m$.
Let $M_i$ denote the cofactor of $M$ corresponding to $(-1)^{i+1}$ times the determinant of the matrix that results from deleting the $i$-th row and the first column of $M$.
	Consider the following $a+b-2j$ polynomials
	\[
	\begin{array}{cccccccrrcccccc}
	F & =& f_a &+& f_{a-1}x_0 &+& \cdots +& f_{a-j}x_0^j &+& f_{a-j-1}x_0^{j+1} &+& \cdots\\
	x_0F &=&     & & f_ax_0 & +&\cdots +& f_{a-j+1}x_0^j & +& f_{a-j}x_0^{j+1}  &+& \cdots\\ 
	\vdots   & &  & & &    & &  & & & \\
	x_0^{j-1}F &=&     & &  & & & f_{a}x_0^j & +& f_{a-1}x_0^{j+1} &+& \cdots\\ 
	\vdots   & &  & & &    & &  & & & \\
	x_0^{b-j-1}F &=&    \cdots   & &  & & & \\
	G &=& g_b &+& g_{b-1}x_0 &+& \cdots +& g_{b-j}x_0^j & +& g_{b-j-1}x_0^{j+1} &+& \cdots\\
	x_0G &=&      & & g_bx_0 & +&\cdots +& g_{b-j+1}x_0^j & +& g_{b-j}x_0^{j+1}  &+& \cdots\\ 
	\vdots   & &  & & &    & &  & & & \\
	x_0^{a-j-1}G &=&    \cdots   & &  & & & \\
	\end{array}
	\] 	
	Multiplying the above $a+b-2j$ polynomials respectively by $M_1, M_2, \ldots, M_{a+b-2j}$  and taking the sum of them, we get
	\[\begin{array}{ccl}
	T:=\left(\sum\limits_{i=1}^{b-j} M_ix_0^{i-1} \right) F&+& \left(\sum\limits_{i=1}^{a-j} M_{b-j+i}x_0^{i-1} \right) G=\\ &=& f_aM_1+g_bM_{b-j+1} +x_0 \left(\cdots \right)  +\cdots+ x_0^{j}(\cdots) +x_0^{j+1}(\cdots)+\cdots\\
	\end{array}
	\] 	
	Note that in the form $T\in K[x_1, \ldots x_n][x_0]$ the coefficient of $x_0^j$ is $\det(M)$. Also, note that all the coefficients 
	of the powers of $x_0$ greater than $j$ are zero; for instance the coefficient of $x_0^{j+1}$ 
	is the sum of the entries of the second column in $M$ multiplied by the cofactors of the entries 
	in the first column.  Of course $T$ belongs to the ideal generated by $F$ and $G$, therefore, since we performed a general change of variables, its leading term belongs to $\init(J)=\gin (I_C)$.  As noted above, the form $T$ can be written as $T=T_{m+j}+x_0 \left(T_{m+j-1} \right)  +\cdots+ x_0^{j}(T_m)$ where $T_i\in K[x_1, \ldots x_n]_i$ and $T_m=\det(M),$ thus
	$\init(T)=x_0^j\cdot \init(\det(M)) \in \gin(I_C)$. 
	Since $\init(T)$ vanishes with multiplicity $m$ at the point $Q:=(1,0,\ldots, 0)$, by Lemma \ref{l. adim vdim and Gin},  we are done.
\end{proof}

\begin{remark}\label{r. compute edim CI} As a consequence of Proposition 
\ref{p. codimension 2 complete intersection}, a general codimension 2 complete intesection 
$C\subseteq \PP^n$  admits an unexpected hypersurface of degree $t$ for multiplicity $m$
	whenever $\mbox{edim}\left(C,t,m \right)=0$, i.e. $\mbox{vdim}\left(C,t,m \right)\le 0$.
	In order to compute $\mbox{edim}\left(C,t,m \right)$, where $C$ is a complete 
	intersection of type $(a,b)$ in $\PP^n$,  take the short exact sequence
	\[
	0\to R(-a-b)\to R(-a)\oplus R(-b) \to I_{C} \to 0. 
	\]
	So, computing the dimension of the graded pieces of degree $t$, we get
	\[\dim [I_C]_t= {t-a +n\choose n}+{t-b +n\choose n}- {t-a-b +n\choose n}. \]
	Therefore, we have
	\[
	\mbox{edim}\left(C,t,m \right)=\max\left\{\dim [I_C]_t-{m-1+n \choose n},0\right\}.
	\]
\end{remark}

\begin{remark}The case $j=0$ is covered by \cite[Proposition 2.4]{HMNT}. The cone   
with vertex $P$ over a codimension 2 complete intesection $C\subseteq \PP^n$  
of type $(a,b)$ is an unexpected hypersurface for $C$ of degree $ab$ and multiplicity $ab$ at $P$. 
\end{remark}

If $j\ge 1$ it is enough to show that ${\vdim}\big(C, (a-j)(b-j)+j, (a-j)(b-j)\big)\le 0$ to 
ensure the existence of an unexpected hypersurface.
In the next proposition we deal with the case $j=1$ in $\PP^3$. Theorem \ref{part of conj} 
shows that a general complete intersection of type $(2,b)$ never admits an unexpected hypersurface 
with $j=1$. The following shows that when $a>2$ we do obtain unexpected hypersurfaces.

\begin{proposition} \label{unexpected CI}	Let $C\subseteq \PP^3$ be a general 
codimension 2 complete intersection defined by two forms of degree $a,b$ respectively. 
Say $a\le b$. 	Set\  $t:=(a-1)(b-1)+1$ and $m:=(a-1)(b-1)$.
	If $a >2$ then  $C$ admits an unexpected hypersurface of degree $t$ for multiplicity $m$. 
\end{proposition}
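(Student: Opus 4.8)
The plan is to combine the non-vanishing statement of Proposition~\ref{p. codimension 2 complete intersection} with a short Hilbert-function computation showing that the virtual dimension is non-positive. Throughout set $n=3$, $t=(a-1)(b-1)+1$ and $m=(a-1)(b-1)$, so that $t=m+1$. Applying Proposition~\ref{p. codimension 2 complete intersection} with $j=1$ (which is legitimate since $1<a$) produces exactly these values of $t$ and $m$ and gives ${\adim}(C,t,m)>0$. By Remark~\ref{equiv to unexp} it is then enough to show ${\vdim}(C,t,m)\le 0$: indeed this forces ${\edim}(C,t,m)=0<{\adim}(C,t,m)$, so $C$ admits an unexpected hypersurface of degree $t$ for multiplicity $m$.

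The next step is to compute $\dim[I_C]_t$. Since $a,b\ge 3$ one has $t-(a+b-2)=(a-2)(b-2)\ge 1$, so $t$ is at least the Castelnuovo--Mumford regularity $a+b-2$ of $R/I_C$, and hence $h_{R/I_C}(t)$ equals the Hilbert polynomial $ab\cdot t+1-g$, where $g=\frac{1}{2}ab(a+b-4)+1$ is the arithmetic genus of a complete intersection curve of type $(a,b)$ in $\PP^3$. (Alternatively one can use the binomial formula for $\dim[I_C]_t$ recorded in Remark~\ref{r. compute edim CI}, which is valid here because $t\ge a+b-3$.) Thus, using $\binom{m+2}{3}=\binom{t+1}{3}$,
\[
{\vdim}(C,t,m)\;=\;\dim[I_C]_t-\binom{m+2}{3}\;=\;\binom{t+3}{3}-\binom{t+1}{3}-h_{R/I_C}(t).
\]

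The heart of the matter is the resulting numerical inequality. Using the identity $\binom{t+3}{3}-\binom{t+1}{3}=(t+1)^2$, the value $t+1=ab-a-b+3$, and the formula for $h_{R/I_C}(t)$, the inequality ${\vdim}(C,t,m)\le 0$ simplifies, after clearing denominators, to
\[
ab(a+b-4)\;\ge\;2(a+b-3)^2 .
\]
I would prove this in two elementary steps, both using $a\ge 3$: first $(a-3)(b-3)\ge 0$ gives $ab\ge 3(a+b-3)$, and since $a+b\ge 6$ we have $a+b-4>0$, so $ab(a+b-4)\ge 3(a+b-3)(a+b-4)$; second, $3(a+b-4)\ge 2(a+b-3)$ is equivalent to $a+b\ge 6$, which holds. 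Multiplying through gives the claim, with equality exactly when $a=b=3$ (in which case ${\vdim}(C,t,m)=0$, and unexpectedness still follows since ${\adim}(C,t,m)>0$).

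I do not anticipate a genuine obstacle: once Proposition~\ref{p. codimension 2 complete intersection} is in hand, the argument is purely the Hilbert-function bookkeeping above. The only point that needs care is checking that $t$ lies in the range where the complete-intersection Hilbert-function formula behaves well (no spurious negative binomial coefficients), and observing that this — together with the final inequality $(a-3)(b-3)\ge 0$ — is precisely where the hypothesis $a>2$ is used, consistently with Theorem~\ref{part of conj}, which shows that a general complete intersection of type $(2,b)$ never exhibits $j=1$ unexpectedness.
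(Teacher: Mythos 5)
Your proof is correct and follows essentially the same route as the paper: invoke Proposition \ref{p. codimension 2 complete intersection} with $j=1$ to get ${\adim}(C,t,m)>0$, then check ${\vdim}(C,t,m)\le 0$ by Hilbert-function bookkeeping for the complete intersection. The only difference is cosmetic: the paper substitutes the binomial formula of Remark \ref{r. compute edim CI} and factors ${\vdim}(C,t,m)=1-\tfrac{1}{2}(a-2)(b-2)(a+b-4)$ (treating $(3,3)$ and $(3,4)$ separately), while you use the degree/genus form of the Hilbert polynomial and reduce to the equivalent inequality $ab(a+b-4)\ge 2(a+b-3)^2$, proved uniformly.
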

\begin{proof}

From Remark \ref{r. compute edim CI} we have
	\[\mbox{vdim}\left(C,t,m \right)={t-a +3\choose 3}+{t-b +3\choose 3}- {t-a-b +3\choose 3}-{t-2+3 \choose 3}.
	\]
	If $a=b=3$ then $t=5$ and
	\[\mbox{vdim}\left(C,t,m \right)={5\choose 3}+{5\choose 3}-{6 \choose 3}\le 0.
	\]
	A similar computation follows if $a=3$ and $b=4$. Note that, in this case $t+3-a-b=3$. 
	Since the integer $t+3-a-b$ increases with $a$ and $b$, in particular if $(a,b)>(3,4)$ 
	we have $t+3-a-b > 3$, so the binomial coefficient ${t +3-a-b\choose 3}$ is not zero. Then, 
	assuming $(a,b)>(3,4)$, after a standard computation we get
	\[
	\begin{array}{rl}
	\mbox{vdim}\left(C,t,m \right)& = -\dfrac{1}{2}a^2b-\dfrac{1}{2}ab^2+a^2+b^2+4ab-6a-6b+9=\\
	&\\
	&=-\dfrac{1}{2}(a-2)(b-2)(a+b-4)+1\le 0.\\
	\end{array}
	\]
	Thus $\mbox{vdim}\left(C,t,m \right)\le 0$.
\end{proof}

\begin{remark} \label{ans question}

Question 2.11 of \cite{HMNT} asks the following: Let $Z$ be a non-degenerate set of points in linear general position in $\PP^n$, $n \geq 3$. Is it true that there does not exist an unexpected hypersurface of any degree $t$ and multiplicity $m = t - 1$ at a general point? (All other possible combinations of $(n,t,m)$ are settled.)

The work in this section allows us to give a negative answer to this question. Indeed, let $X$ be a general complete intersection of type $(3,3)$ in $\PP^n$ ($n \geq 3$). Then let $t = (3-1)(3-1)+1 = 5$ and $m = t-1 = 4$. From Remark \ref{r. compute edim CI} we have 
\[
\hbox{vdim } (X,t,m) = \binom{n+2}{n} + \binom{n+2}{n} - \binom{n-1}{n} - \binom{n+3}{n} \leq 0
\]
for all $n \geq 3$. On the other hand, Proposition \ref{p. codimension 2 complete intersection} gives (with $a=b=3$ and $j=1$) that $X$ does lie on a hypersurface of degree 5 and multiplicity 4 at a general point, so this hypersurface is unexpected. Now take a set $Z$ consisting of sufficiently many general points on $X$, so that $[I_X]_5 = [I_Z]_5$. Then $Z$ is in linear general position since $X$ is irreducible, and so $Z$ admits an unexpected hypersurface and gives a negative answer to the question.

We believe that the natural extension of Proposition \ref{unexpected CI} to $\PP^n$ is also true, but we do not have a proof.
\end{remark}

\begin{remark}
One could make the following objection to Remark \ref{ans question} as an answer to Question 2.11 of \cite{HMNT}. That is that the degree 5 component of $I_Z$ is the same as the degree 5 component of $I_X$, so the base locus of $[I_Z]_5$ is $X$ and the geometry is really only about $X$ and not about $Z$.

To respond to this, we make the following tweak. Take $n = 4$. We choose the same $X$ as above (now a surface in $\PP^4$), and we still take $t=5$, $m=4$. One checks that the Hilbert function of $R/I_X$ is
\[
1, 5, 15, 33, 60, 96, 141 ...
\]
and we still have 
\[
\hbox{vdim }(X,t,m) = \binom{4+2}{4} + \binom{4+2}{4} - \binom{4-1}{4} - \binom{4+3}{4} = -5.
\]
But $X$ contains a set $Z$ of 225 points giving a $(3,3,5,5)$ complete intersection, hence
\[
\dim [I_{Z}]_5 = 32 = \dim [I_X]_5 + 2,
\]
so $\hbox{vdim}(Z,t,m) = -3$, hence all quintic hypersurfaces containing $Z$ with a general point of multiplicity 4
are unexpected, and there is certainly at least one (this being the one that contains $X$).
There still remains an objection: computations show in this case that $X$ and $Z$ 
both have a unique unexpected quintic
for multiplicity 4, so the one for $Z$ is the same one that we already got for $X$.
Nevertheless, what we have gained is that the component of the ideal for $Z$ in degree 5 is no longer the same as that of $X$, and indeed the base locus now is finite while the base locus in the original example was $X$ itself.
\end{remark}

\section{Cones, unmixed curves and unions with finite sets of points in 
$\PP^3$} \label{s.Unmixed curves and unions with finite sets in P3}

In this section, for the most part we restrict our attention to the case of subvarieties of $\PP^3$. 
We recall that
\[
AV_{X,0} (t) = {\adim} (X,t,t) - {\vdim} (X,t,t).
\]
We will determine the $AV_{Z,0}$ sequence for the cases where $Z=C$ is an 
equidimensional curve in $\PP^3$ and where $Z=C\cup X$ is the union of a 
curve $C$ and a finite set of points $X$. In particular, since $j=0$, we are focusing on the case of unexpected {\em cones}.
In both cases the geometric information on $C$ provides a description of the {\em persistence} of  
unexpected hypersurfaces.  We recall the following result, which reflects the persistence of 
unexpectedness for cones for a non-degenerate curve in $\PP^3$.

\begin{theorem}[\cite{HMNT} Corollary 2.12] \label{HMNT cone}
Let $C \subset \PP^3$ be a reduced, equidimensional, non-degenerate curve 
of degree $d \geq 2$ ($C$ may be reducible, singular, and/or disconnected). Let 
$P \in \PP^3$ be a general point. Let $k \geq d$ be a positive integer. Then 
$C$ admits an unexpected hypersurface of degree $k$ with multiplicity $k$ at $P$. When $k = d$, this hypersurface is unique.
\end{theorem}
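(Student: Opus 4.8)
\textbf{Proof proposal for Theorem \ref{HMNT cone}.}

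The statement we want to establish is itself cited from \cite{HMNT}, so the natural plan is to rederive it cleanly using the machinery built up in this paper, in particular the identification in Theorem \ref{t. AV is an O-sequence} of $AV_{X,0}$ with a Hilbert function and the cohomological interpretation of Remark \ref{cohom interp}. The first step is to reduce everything to showing $AV_{C,0}(k)>0$ for $d\le k$, because once ${\adim}(C,k,k)>0$ (which follows since a cone over $C$ with vertex $P$ certainly has degree $\deg C = d\le k$ after padding with a power of a linear form, so there is always a hypersurface of degree $k$ singular to order $k$ at $P$), unexpectedness is equivalent to $AV_{C,0}(k)>0$ by Remark \ref{equiv to unexp}. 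So the content is: for a reduced equidimensional non-degenerate curve $C\subset\PP^3$ of degree $d$ and general $P$, the quantity ${\adim}(C,k,k)-{\vdim}(C,k,k)$ is strictly positive for all $k\ge d$.

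The cleanest route I would take uses the cohomological formula $AV_{C,0}(k)=h^1(\mathcal I_{C\cup P^k}(k))$ together with the geometry of cones. Let $S_P$ denote the cone over $C$ with vertex $P$; since $C$ is reduced, equidimensional and non-degenerate of degree $d$ in $\PP^3$, the cone $S_P$ is an irreducible-componentwise surface of degree exactly $d$ (it is a genuine surface, not lower-dimensional, precisely because $C$ is non-degenerate, so it is not contained in a plane through $P$). For $k=d$: the space $[I_C\cap I_P^d]_d$ is at least one-dimensional (it contains the form defining $S_P$), while ${\vdim}(C,d,d)=\dim[I_C]_d-\binom{d+2}{3}$. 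One now has to check that this virtual dimension is $\le 0$, i.e. $\dim[I_C]_d\le\binom{d+2}{3}$, which is a Hilbert-function estimate: $\dim[I_C]_d=\binom{d+3}{3}-h_C(d)$ and $h_C(d)\ge 3d-2$ for a non-degenerate curve (the general-hyperplane-section points span a plane and impose at least $2\cdot(\text{something})$... the bound $h_C(t)\ge 3t$ eventually, but for $t=d$ one needs the precise non-degeneracy bound $h_C(t)\ge \min(\binom{t+2}{2}, 3t)$ type estimate or rather directly that $d$ general points on the hyperplane section already force $h_C(d)$ large enough). Carrying out this inequality is the first routine-but-delicate calculation; the upshot should be ${\vdim}(C,d,d)\le 0<{\adim}(C,d,d)$, giving unexpectedness, and uniqueness when $k=d$ follows because ${\adim}(C,d,d)=1$ exactly (any degree-$d$ hypersurface with a point of multiplicity $d$ is a cone with that vertex by Bézout, as noted in the Remark after Definition \ref{unexpDef}, and the cones over $C$ of degree $d$ with vertex $P$ form a $1$-dimensional space since they must vanish on $C$).

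For the persistence, i.e. $k>d$, I would argue monotonically: $AV_{C,0}$ shifted by one is an $O$-sequence by Theorem \ref{t. AV is an O-sequence}, hence once it is nonzero it stays nonzero until it (possibly) becomes zero and then stays zero — but an $O$-sequence cannot return to a positive value after hitting zero, so this direction alone is not enough; instead I would show directly that multiplying by a general linear form $\ell\in I_P$ gives an injection of the relevant cohomology, or more concretely that the cone construction persists: given a cone $F$ over $C$ of degree $k$ with vertex $P$, and a general plane $H\ni P$, one gets cones of degree $k+1$ by multiplying by the linear form of $H$ and more, so $\dim[I_C\cap I_P^k]_k$ grows at least as fast as $\dim[I_C\cap I_P^{k+1}]_{k+1}$ minus the growth of the space of all degree-$(k+1)$ cones with vertex $P$ (which is controlled), while ${\vdim}$ behaves predictably. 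The cleanest formulation: the space of cones of degree $k$ over $C$ with vertex $P$ is $[\widetilde K_0(I_C')]_k$ in the notation of Remark \ref{r: partial elimination ideal} (after moving $P$ to $Q$), and this is just the degree-$k$ part of the ideal of the projection of $C$ from $P$ to $\PP^2$; non-degeneracy of $C$ means this projected curve is a genuine plane curve of degree $d$ (or $\le d$), so its ideal is nonzero in every degree $\ge d$, and comparing with ${\vdim}$ — which one computes is $\le 0$ in this whole range using $h_C(k)\ge 3k$ for $k\ge$ some bound and the non-degeneracy bound below that — finishes it.

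The main obstacle, as usual with ``unexpectedness'' statements, is controlling ${\vdim}$: one must show $\dim[I_C]_k\le\binom{k+2}{3}$ for all $k\ge d$, equivalently $h_C(k)\ge\binom{k+3}{3}-\binom{k+2}{3}=\binom{k+2}{2}$ — wait, that is too strong; rather the correct inequality to verify is $\dim[I_C]_k-\binom{k+2}{3}\le 0$, i.e. $h_C(k)\ge\binom{k+3}{3}-\binom{k+2}{3}=\binom{k+2}{2}$, which is false for large $k$ (a curve has $h_C(k)\sim dk$, linear in $k$). So in fact ${\vdim}(C,k,k)$ is eventually very \emph{negative}, which only helps; the delicate range is small $k$ near $d$, where one genuinely needs the sharp bound $h_C(k)\ge 3k$ coming from the hyperplane section lying in linearly general position — and this is exactly where the non-degeneracy and reducedness/equidimensionality hypotheses enter. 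I expect the bookkeeping for $k$ in the narrow window $d\le k\le 2d$ or so to be the real work; for $k=d$ it reduces to the classical fact that $d$ points spanning $\PP^2$ impose at least $\min(d,?)$ independent conditions, combined with the existence of the cone form.
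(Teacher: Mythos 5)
A preliminary remark on the benchmark: the paper itself does not prove Theorem \ref{HMNT cone} — it is quoted from \cite{HMNT} — but its own Theorem \ref{uct} re-derives a sharper statement ($AV_{C,0}(t)=\binom{e-1}{2}-g$ for all $t\ge e$), and that proof is the right comparison for your attempt. Your plan (exhibit cones to get ${\adim}(C,k,k)>0$, control ${\vdim}$, conclude via Remark \ref{equiv to unexp}) is the same in spirit, but it has a genuine gap that sinks the persistence part. You have the behaviour of ${\vdim}$ backwards: since ${\vdim}(C,k,k)=\dim[I_C]_k-\binom{k+2}{3}=\binom{k+2}{2}-h_C(k)$ and $h_C(k)=dk+1-g$ grows only linearly in $k$ (this exact formula for $k\ge d$ is the GLP input), the virtual dimension is eventually large and \emph{positive}, not ``very negative.'' So for $k\gg d$ the expected dimension is a quadratically growing positive number, and it is not enough to produce one cone of degree $k$ through $C$ or to argue ``${\vdim}\le 0$ in this whole range using $h_C(k)\ge 3k$'' — that inequality is both false for large $k$ and not the right target. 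What is needed, and what the paper supplies, is the Bezout argument that any degree-$k$ hypersurface with a point of multiplicity $k$ at $P$ is a cone with vertex $P$ and must contain the degree-$d$ cone $S_P$ over $C$ as a component (each line from $P$ to a point of $C$ meets it in $\ge k+1$ points); hence ${\adim}(C,k,k)=\binom{k-d+2}{2}$ exactly, and then ${\adim}-{\vdim}=\binom{d-1}{2}-g$, constant in $k$ and positive precisely because a reduced, equidimensional curve has arithmetic genus $g\le\binom{d-1}{2}$ with equality only for plane curves. Your O-sequence observation correctly explains why Theorem \ref{t. AV is an O-sequence} alone cannot give persistence, but nothing in your sketch replaces this exact comparison.

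Even the case $k=d$, which you treat as ``routine-but-delicate,'' is not closed as written: ${\vdim}(C,d,d)\le 0$ amounts to $h_C(d)\ge\binom{d+2}{2}$, a quadratic bound in $d$, so linear estimates of the type $h_C(d)\ge 3d-2$ coming from non-degeneracy of the hyperplane section cannot give it. With $h_C(d)=d^2+1-g$ it is equivalent to the sharp bound $g\le\binom{d-1}{2}-1$ for non-planar reduced equidimensional curves — exactly the genus characterization the paper cites from \cite{HMNT} — combined with the GLP statement that $h_C(t)=dt+1-g$ already holds at $t=d$. Your uniqueness argument at $k=d$ is essentially fine once these two inputs are in place, but without them neither the $k=d$ estimate nor the $k>d$ comparison is established.
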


Our next result not only reproduces the persistence 
given by the result of Theorem~\ref{HMNT cone} but also gives a measure of the unexpectedness in each degree. 

\begin{theorem} \label{uct}
Let $C \subset \PP^3$ be a reduced, equidimensional curve of degree $e$ and arithmetic genus $g$. Then
\[
AV_{C,0}(t) = \binom{e-1}{2} - g
\]
for all $t \geq e$. Moreover, $AV_{C,0}(e) = 0$ if and only if $AV_{C,0}(t)=0$ for $t\geq e$ if and only if $C$ lies in a plane.
\end{theorem}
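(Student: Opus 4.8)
The plan is to compute $AV_{C,0}(t)$ for $t \geq e$ via a cohomological interpretation, in the spirit of Remark~\ref{cohom interp}, and then to identify the two ``if and only if'' conditions using standard facts about the Hilbert function of reduced equidimensional curves in $\PP^3$. First I would take $P$ a general point of $\PP^3$ and recall from Lemma~\ref{l.AV seq 1} that $AV_{C,0}(t) = \dim [R/(I_C + I_P^t)]_t$, where we now write $t = m$ (so $j = 0$). The key observation is that for $t \geq e = \deg C$, the cone $S_P$ over $C$ with vertex $P$ has degree $e \leq t$, so $S_P$ contributes a hypersurface of degree $t$ with multiplicity $t$ at $P$; more precisely, I expect the whole discrepancy $AV_{C,0}(t)$ to stabilize because, once the cone construction is available, the only forms in $[I_C]_t$ vanishing to order $t$ at $P$ are (multiples of) equations pulled back from the cone. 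To make this precise I would use the exact sheaf sequence $0 \to \mathcal I_{C \cup P^t} \to \mathcal I_C \to \mathcal O_{P^t} \to 0$, twist by $t$, and read off $AV_{C,0}(t) = h^1(\mathcal I_{C \cup P^t}(t)) + (\text{correction for failure of ACM-ness})$. Since $C$ need not be ACM here, I would instead relate $h^1(\mathcal I_{C}(t))$ and the cone: for $t \geq e$ one has $h^1(\mathcal I_C(t))$ eventually constant, and in fact for a reduced equidimensional curve the relevant stable value is governed by $h^1(\mathcal O_C(t))$ together with the genus.

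Concretely, I would argue that for $t \geq e-3$ (hence certainly $t \geq e$ after checking the range) one has $h^1(\mathcal I_C(t)) = 0$ by Castelnuovo–Mumford regularity considerations for a curve of degree $e$ — actually regularity of a reduced equidimensional curve of degree $e$ in $\PP^3$ is at most $e-1$, so $h^1(\mathcal I_C(t)) = 0$ for $t \geq e-2$. Then the map $[I_C]_t \to H^0(\mathcal O_{P^t}(t))$ has cokernel of dimension exactly $AV_{C,0}(t)$, and since $H^0(\mathcal O_{P^t}(t)) = \binom{t-1+3}{3} - \text{(well, } \binom{t+2}{3}\text{)}$... rather, $\dim H^0(\mathcal O_{P^t}(t)) = \binom{t+2}{3}$ is not right either; one has $h^0(\mathcal O_{P^t}(t)) = \binom{t-1+3}{3} = \binom{t+2}{3}$ — let me just say $\deg P^t = \binom{t+2}{3}$ and use $h_{P^t}(t) = \binom{t+2}{3}$. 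The cokernel dimension is then $\binom{t+2}{3} - (\dim[R]_t - \dim[I_C]_t) = \binom{t+2}{3} - h_C(t)$. So the real content is: for $t \geq e$, $h_C(t) = et + 1 - g$ (the Hilbert polynomial value, which the Hilbert function attains once $t \geq \reg(R/I_C^{\mathrm{sat}})$, and for a reduced equidimensional curve of degree $e$ this happens by $t = e - 1$). Then
\[
AV_{C,0}(t) = \binom{t+2}{3} - (et + 1 - g) - \left[ \binom{t+2}{3} - \dim[I_P^t]_t \cdots \right],
\]
— here I would need to carefully bookkeep which $\binom{\cdot}{\cdot}$ is $h_{R/I_P^t}(t)$ versus $\dim[R]_t$; the upshot after cancellation should be $AV_{C,0}(t) = \binom{t+2}{3} - h_C(t) - \left(\binom{t+2}{3} - \binom{t+2}{3}\right)$... the point being the $t$-dependent pieces cancel and one is left with $\binom{e-1}{2} - g$. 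I would verify this by a direct substitution of $h_C(t) = et + 1 - g$ and $h_{P^t}(t) = \binom{t+2}{3}$ into $AV_{C,0}(t) = {\adim}(C,t,t) - {\vdim}(C,t,t)$ using the cone: ${\adim}(C,t,t) = \dim(\text{cones of degree } t \text{ over } C \text{ with vertex } P)$, which by the results quoted from \cite{HMNT} equals $\dim [I_Z]_t$ where $Z$ is the general plane section of $C$ viewed in $\PP^2$, and a Riemann–Roch / Hilbert function computation for the plane curve section gives the clean binomial.

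For the equivalences: the chain ``$AV_{C,0}(e) = 0 \iff AV_{C,0}(t) = 0$ for all $t \geq e \iff C$ is planar'' follows immediately from the formula once established, since the formula shows $AV_{C,0}(t)$ is the \emph{constant} $\binom{e-1}{2} - g$ for all $t \geq e$ — so vanishing at $t = e$ is equivalent to vanishing for all $t \geq e$. It then remains to show $\binom{e-1}{2} - g = 0 \iff C$ lies in a plane. The inequality $g \leq \binom{e-1}{2}$ is the classical genus bound (a curve of degree $e$ in $\PP^3$ has arithmetic genus at most that of a plane curve of degree $e$, which is $\binom{e-1}{2}$), with equality exactly when $C$ is a plane curve — this is a standard Castelnuovo-type result (see, e.g., the literature on Hilbert functions of space curves; for reduced equidimensional curves equality in the genus bound forces planarity). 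So I would cite this and be done.

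\textbf{Main obstacle.} The delicate point is justifying that $h_C(t)$ equals its Hilbert-polynomial value $et + 1 - g$ already for $t \geq e$ (not just for $t \gg 0$), for an arbitrary \emph{reduced, equidimensional, possibly reducible and disconnected} curve — this is a regularity statement that I would need to source carefully, and likewise the claim that $h^1(\mathcal I_C(t)) = 0$ for $t \geq e$. If the sharp bound ``$\reg \leq e-1$'' for reduced equidimensional space curves is not directly citable I may need to reduce to the general plane section $Z$ of $C$ (a set of $e$ points in $\PP^2$ in uniform position, by irreducibility-type arguments — though here $C$ may be reducible, so ``uniform position'' fails and I should instead only use that $Z$ is $e$ reduced points, whose $h$-vector is bounded and whose regularity is at most $e-1$) and lift via the exact sequence $0 \to \mathcal I_C(-1) \to \mathcal I_C \to \mathcal I_{Z/H} \to 0$, using that $AV_{C,0}$ only sees the cone, hence only sees $Z$. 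In fact routing the entire argument through the plane section $Z$ — computing ${\adim}(C,t,t) = \dim[I_{Z}]_t$ (cones) and ${\vdim}$ directly — is probably the cleanest path and sidesteps the ACM issue entirely; the genus enters through $\binom{e-1}{2} - g = \binom{e-1}{2} - (\binom{e-1}{2} - (\text{number of points } Z \text{ fails to impose independent conditions in degree} \ldots))$, i.e. via the relation $g = \binom{e-1}{2} - \sum_{i}(\text{something about the } h\text{-vector of } Z)$ that holds for equidimensional curves. I expect that is where most of the real work and the correct citations lie.
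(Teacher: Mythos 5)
Your overall strategy coincides with the paper's: for $t\ge e$ one computes ${\vdim}(C,t,t)$ from $h_C(t)=et+1-g$ (the regularity point you flag as your ``main obstacle'' is exactly what the paper disposes of by citing \cite{GLP}), one computes ${\adim}(C,t,t)$ by showing that any degree-$t$ cone with vertex at the general point $P$ and containing $C$ must contain the cone $S_P$ over $C$, and the two equivalences then follow from the constancy of the resulting formula together with the genus bound $g\le\binom{e-1}{2}$, with equality exactly for plane curves (the paper cites \cite[Proposition 2.1]{HMNT} for this, matching your appeal to the classical bound).

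However, there is a genuine gap at the heart of the computation: you never actually evaluate ${\adim}(C,t,t)$, and the one concrete formula you propose for it is wrong. You assert ${\adim}(C,t,t)=\dim[I_Z]_t$ where $Z$ is the general \emph{plane section} $C\cap H$, a set of $e$ points in $\PP^2$. The correct object is not the plane section but the \emph{projection} $\pi_P(C)$: a cone of degree $t$ with vertex $P$ containing $C$ is the pullback under $\pi_P$ of a plane curve of degree $t$ containing the plane curve $\pi_P(C)$ of degree $e$, whose ideal is principal, so the count is $\binom{t-e+2}{2}$. With $Z=C\cap H$ the number $\dim[I_Z]_t$ is far too large (for a twisted cubic and $t=3$ it gives $7$, while ${\adim}(C,3,3)=1$), and substituting it does not yield the constant $\binom{e-1}{2}-g$. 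Moreover, the statement that every element of $[I_C\cap I_P^t]_t$ is a multiple of the equation of $S_P$ --- which is precisely what reduces the count to $\binom{t-e+2}{2}$ --- appears in your proposal only as an expectation; it requires the Bezout argument that is the paper's key step: if $S$ is such a cone and $Q\in C$, the line through $P$ and $Q$ meets $S$ with multiplicity at least $t+1>\deg S$, hence lies on $S$, so $S\supseteq S_P$; writing the form defining $S$ as the degree-$e$ form of $S_P$ times a residual form of degree $t-e$, the multiplicity-$t$ condition at $P$ forces the residual to be a cone with vertex $P$, giving exactly $\binom{t-e+2}{2}$ choices. Once ${\adim}$ is pinned down this way, your subtraction does collapse to $\binom{e-1}{2}-g$ and the remainder of your argument (constancy in $t$, and the genus bound with equality if and only if $C$ is planar) goes through as in the paper.
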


\begin{proof}
Let $P$ be a general point. We are considering hypersurfaces of degree $t$ with multiplicity $t$ at 
$P$, which are cones with vertex $P$ that contain $C$. Since the projection of $C$ from 
$P$ is contained in the hyperplane section of such a cone, we must have $t \geq e$.
We first compute the virtual dimension of $I_{C \cup P^t}$:
\[
\begin{array}{rcl}
\dim [I_C]_t - \binom{t+2}{3} & = & \binom{t+3}{3} - h_C(t) - \binom{t+2}{3} =\\
& = & \binom{t+2}{2} - [te - g + 1].
\end{array}
\] 
(The fact that for $t \geq e$ we have $h_C(t) = te - g + 1$ follows from the main 
result of \cite{GLP}.)
On the other hand, if $S$ is such a cone of degree $t$ and if $Q$ is a point of $C$, 
then the line joining $P$ and $Q$ meets $S$ with multiplicity at least $t+1$, so it 
must lie on $S$. That is, $S$ contains as a component the cone over $C$ with vertex 
$P$. Then the actual dimension of $I_{C \cup P^t}$ is the (vector space) dimension of the 
linear system of plane curves of degree $t-e$, i.e. it is $\binom{t-e+2}{2}$. Then we obtain
\[
AV_{C,0}(t) = \binom{e-1}{2} - g
\]
after a simple calculation. The last part follows since $g = \binom{e-1}{2}$ if and only if $C$ is a plane curve, \cite[Proposition 2.1, Claim 1]{HMNT}.
\end{proof}

Even if your interest is for the case when $Z$ is a finite set of points (which we consider in the next section),
situations involving curves (such as we are looking at in this section) 
sometimes force themselves into the picture in subtle ways,
as the next example shows.

\begin{example}
Let $X_1 , X_2 \subset \PP^3 $ be finite sets of points with $h$-vectors, respectively,
\[
(1,3,6,5,3,3,2) \ \ \ \hbox{ and } \ \ \ (1,3,6,6,3,3,2).
\]
In both cases, the two 3's constitute maximal growth, viewing the $h$-vector as a Hilbert function,
and force ``many" of the points to lie on a curve of degree 3 \cite{BGM}. For the sake of this 
example, let us assume that in both cases this curve is a twisted cubic, that it contains 18 points 
of each of  $X_1$ and $X_2$ (with $h$-vector $(1,3,3,3,3,3,2)$ in both cases),  and that the 
remaining 5 points of $X_1$ and the remaining 6 points of $X_2$ are chosen generically.  
Then in the first case there is a unique unexpected cone of degree 5 with vertex at a general 
point, while in the second case there is no unexpected cone of degree 5. 
We omit details here since we will study this 
kind of situation in the next section (see especially Example \ref{1366332}).
\end{example}

Our next result can again be viewed as a measure of unexpectedness for cones in 
each degree, and a statement about the persistence of unexpected cones.

\begin{theorem}\label{t.C U X}
Let $X \subset \PP^3$ be a finite set of points. Let $C$ be a reduced, 
equidimensional curve of degree $e$ and arithmetic genus $g$. Assume that $X$ 
is disjoint from $C$. 
Let $t$ be the smallest integer such that 
\begin{enumerate}
\item[(i)] $|X| < \binom{t+2}{2}$, and 
\item[(ii)] $X$ imposes independent conditions on forms of degree $t$.
\end{enumerate}
Then 
\[
AV_{X \cup C,0}(t+e) = AV_{X,0}(t) + \left [ \binom{e-1}{2} - g \right ].
\]
\end{theorem}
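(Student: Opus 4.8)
The plan is to mimic and combine the arguments behind Theorem~\ref{uct} and (implicitly) Proposition~\ref{AV_X,0(alpha)}/Lemma~\ref{subscheme}, exploiting the disjointness of $X$ and $C$ together with the cone geometry. Fix a general point $P \in \PP^3$ and work in degree $t+e$ with multiplicity $t+e$ at $P$, so we are looking at cones with vertex $P$ through $X \cup C$. As in the proof of Theorem~\ref{uct}, any such cone must contain the cone $S_P(C)$ over $C$ with vertex $P$ as a component, since every line $\overline{PQ}$ with $Q \in C$ meets it with multiplicity $\ge t+e+1 > \deg$. Hence residuating off $S_P(C)$ identifies $[I_{X \cup C} \cap I_P^{t+e}]_{t+e}$ with the degree-$t$ piece of the ideal of cones with vertex $P$ through the residual scheme, i.e. with $[I_{X'} \cap I_P^{t}]_{t}$ where $X'$ is the projection of $X$ from $P$ into a hyperplane $H$; equivalently, after reinterpreting on $H\cong\PP^2$, we get $\adim(X\cup C, t+e, t+e) = \binom{t-e+?}{?}$-type count plus the contribution of $X'$. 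Care is needed here: the residual to $S_P(C)$ of a cone of degree $t+e$ is a cone of degree $t$, and it contains $X$ iff the corresponding plane curve of degree $t$ contains the projected points $X'$; so $\adim(X\cup C,t+e,t+e) = \dim[I_{X'}]_{t}$ in $\PP^2$, provided $X$ (hence $X'$, since $X$ and $C$ are disjoint and $P$ is general) imposes the expected conditions.

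Next I would compute the virtual dimension directly. Since $X$ is disjoint from $C$ and $X$ is zero-dimensional, for $t$ as in (i)--(ii) one has, by the defining property of $t$ (condition (ii)) together with the formula $h_C(s) = se - g + 1$ for $s \ge e$ (from \cite{GLP}, exactly as in the proof of Theorem~\ref{uct}) and a Mayer--Vietoris/short exact sequence argument for the disjoint union $X \cup C$, that $h_{X \cup C}(t+e) = h_X(t+e) + h_C(t+e) - 0 = |X| + \big((t+e)e - g + 1\big)$ once $t+e$ is large enough for both $X$ and $C$ to have stabilized and impose independent conditions jointly (disjointness gives $I_{X\cup C} = I_X \cap I_C$ and the relevant Tor-vanishing in this range). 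From this,
\[
\vdim(X\cup C, t+e, t+e) = \binom{t+e+3}{3} - h_{X\cup C}(t+e) - \binom{t+e+2}{3},
\]
and expanding and subtracting $\adim$ should, after the same ``simple calculation'' as in Theorem~\ref{uct}, collapse to $\big(\binom{e-1}{2} - g\big)$ plus precisely the quantity $\adim(X,t,t) - \vdim(X,t,t) = AV_{X,0}(t)$ in $\PP^3$. The bookkeeping that makes this work is that $\vdim(X,t,t)$ in $\PP^3$ is $\dim[I_X]_t - \binom{t+2}{3}$, and $\binom{t+2}{3} = \dim[R_{\PP^3}]_t - \dim[R_{\PP^2}]_t$-style identities relate the $\PP^3$ count in degree $t$ to the $\PP^2$ plane-curve count $\binom{t+2}{2}$ that shows up on the $\adim$ side.

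The main obstacle I anticipate is \emph{not} the algebra but justifying the two ``independence'' inputs in the chosen range $t+e$: namely that $X\cup C$ imposes independent conditions on degree-$(t+e)$ forms beyond what $C$ alone forces, and that projecting $X$ from the general point $P$ preserves the relevant Hilbert-function data so that the residuation step is clean. Condition (ii) on $t$ is designed to give exactly this for $X$ alone; the disjointness of $X$ and $C$ plus a regularity/Castelnuovo--Mumford argument (the curve $C$ being $e$-regular in the relevant sense by \cite{GLP}, and $X$ being $0$-dimensional) should upgrade it to the union, via the exact sequence $0 \to I_{X\cup C} \to I_X \oplus I_C \to R/(I_X + I_C) \to 0$ with $I_X + I_C$ having no component in high degree because $X \cap C = \emptyset$. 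Once those two facts are in hand, everything reduces to the same elementary binomial identity already used to finish Theorem~\ref{uct}. I would therefore structure the write-up as: (1) the cone-residuation lemma giving the $\adim$ formula; (2) the Hilbert-function-of-the-union computation in the range via disjointness and \cite{GLP}; (3) substitute and simplify.
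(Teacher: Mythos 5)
Your outline follows the same route as the paper's proof: split off the cone $S_P$ over $C$ with vertex at the general point $P$ (which misses $X$ since $P$ is general), use Bezout-type divisibility to identify $\adim(X\cup C,t+e,t+e)$ with $\adim(X,t,t)$ (equivalently, with plane curves of degree $t$ through the projection of $X$), compute $\dim[I_{X\cup C}]_{t+e}$ from the conditions imposed by $X$, insert $h_C(t+e)=e(t+e)-g+1$ from \cite{GLP}, and finish with the binomial identity as in Theorem \ref{uct}; your bookkeeping does indeed collapse to $\binom{e-1}{2}-g$. (One small remark: the proviso in your step (1) that $X'$ ``imposes the expected conditions'' is not needed for the $\adim$ identification; that identification uses only the divisibility by the equation of $S_P$ and the fact that no point of $X$ lies on $S_P$.)

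The genuine gap is the step you yourself flag as the main obstacle: you need $\dim[I_{X\cup C}]_{t+e}=\dim[I_C]_{t+e}-|X|$, i.e.\ $[R/(I_X+I_C)]_{t+e}=0$, and the mechanism you propose (Tor-vanishing / Castelnuovo--Mumford regularity for the disjoint union) does not deliver vanishing in degree exactly $t+e$. Hypothesis (ii) only gives $h^1(\mathcal I_X(t))=0$, hence $\reg(I_X)\le t+1$ and not $\le t$ (recall $t$ is minimal), and even granting $\reg(I_C)\le e$ --- itself delicate since $C$ may be degenerate or disconnected --- the standard bound for schemes with finite intersection, $\reg(I_X+I_C)\le\reg(I_X)+\reg(I_C)\le t+e+1$, only kills $[R/(I_X+I_C)]_s$ for $s\ge t+e+1$, one degree too late. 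The missing idea, which is how the paper uses hypothesis (ii) to obtain (\ref{indep cond}), is elementary and uses the cone you already have: for each $x\in X$ choose $F_x\in[R]_t$ vanishing on $X\setminus\{x\}$ but not at $x$ (this is exactly condition (ii)), and let $G\in[I_C]_e$ be the equation of $S_P$, which vanishes at no point of $X$ because $P$ is general; then $GF_x\in[I_C]_{t+e}$ separates $x$ from the rest of $X$, so $X$ imposes independent conditions on $[I_C]_{t+e}$. With that substitution in your step (2), the rest of your outline becomes a complete proof.
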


\begin{proof}

Let $P$ be a general point. Let $S_P$ be the cone over $C$ with vertex $P$; we 
know $\deg S_P = e$, $S_P$ is reduced, and $S_P$ has multiplicity $e$ at $P$. Let $T_P$ 
be a surface (unmixed) of degree $t+e$ containing $X \cup C$ and having multiplicity $t+e$ at 
$P$. ($T_P$ is a cone over a suitable plane curve.)  Note that $S_P$ is a component of $T_P$, 
and note that since $P$ is general, no point of $X$ lies on $S_P$. 
Write $U_P$ for the residual to $S_P$ in $T_P$. Note that $\deg U_P = t$, $U_P$ has 
multiplicity $t$ at $P$ and $U_P$ contains $X$. We also know that ${\adim} (C,e,e) = 1$ 
(Theorem \ref{HMNT cone}). These observations imply
\[
\dim [I_X \cap I_P^t]_t = \dim [I_{X \cup C} \cap I_P^{t+e}]_{t+e}.
\]
We also remark that since $X$ imposes independent conditions on forms of degree $t$ and 
$C$ imposes independent conditions on forms of degree $e$ (by \cite{GLP}), we can conclude 
\begin{equation} \label{indep cond}
\hbox{\em $X$  also imposes independent conditions on $[I_{C}]_{t+e}$.}
\end{equation}

By definition we have
\[
AV_{X,0}(t) = \dim [I_X \cap I_P^t]_t -  \left [ \dim [I_X]_t - \binom{t+2}{3} \right ].
\]
Now we compute (using the observation about independent conditions)
\small{
\[
\begin{array}{lcl}
AV_{X \cup C,0}(t+e) & = \displaystyle  & \displaystyle \dim [I_{X \cup C} \cap I_P^{t+e}]_{t+e} -  \left [  \dim [I_{X \cup C}]_{t+e} - \binom{t+e+2}{3} \right ] \\ \\
& = & \displaystyle \dim [I_X \cap I_P^t]_t - \left [ \dim [I_{X \cup C}]_{t+e} - \binom{t+e+2}{3} \right ] \\ \\
& = & \displaystyle 
AV_{X,0}(t) +  \left [ \dim [I_X]_t - \binom{t+2}{3} \right ] - \left [ \dim [I_{X \cup C}]_{t+e} - \binom{t+e+2}{3} \right ] \\ \\
& = & \displaystyle
AV_{X,0}(t) + \left [ \binom{t+3}{3} - |X|  - \binom{t+2}{3} \right ] - (\dim [ I_{C} ]_{t+e} - |X|) + \binom{t+e+2}{3} \\ \\
& = & \displaystyle 
AV_{X,0}(t) + \binom{t+2}{2} - \dim [I_C]_{t+e} + \binom{t+e+2}{3} \\ \\
& = & \displaystyle 
AV_{X,0}(t) + \binom{t+2}{2} - \left [ \binom{t+e+3}{3} - (e(t+e) - g + 1) \right ] + \binom{t+e+2}{3} \\ \\
& = & \displaystyle 
AV_{X,0}(t) + \binom{e-1}{2} - g
\end{array}
\] }
(the fourth line uses (\ref{indep cond}) and the last line comes after a routine calculation).
\end{proof}

\begin{corollary} \label{X U plane curve}
Let $X \subset \PP^3$ be a finite set of points. Let $C$ be a reduced plane curve in 
$\PP^3$ of degree $d$ disjoint from $X$. Then $X$ has an unexpected cone of degree 
$t$ if and only if $X \cup C$ has an unexpected cone of degree $t+d$. Furthermore, 
$AV_{X,0} (t) = 
AV_{X\cup C,0}(t+d)$ for all $t$.
\end{corollary}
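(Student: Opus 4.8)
The plan is to derive Corollary \ref{X U plane curve} directly from Theorem \ref{t.C U X} by checking that a plane curve $C$ of degree $d$ contributes nothing extra to the $AV$ sequence, i.e.\ that the bracketed correction term $\binom{e-1}{2}-g$ vanishes. First I would observe that for a reduced plane curve of degree $d$ one has $e = d$ and arithmetic genus $g = \binom{d-1}{2}$ (this is exactly the equality case recorded at the end of the proof of Theorem \ref{uct}, citing \cite[Proposition 2.1, Claim 1]{HMNT}). Hence $\binom{e-1}{2} - g = 0$, and Theorem \ref{t.C U X} gives $AV_{X\cup C,0}(t+d) = AV_{X,0}(t)$ for the specific value of $t$ singled out in that theorem (the smallest $t$ with $|X| < \binom{t+2}{2}$ and with $X$ imposing independent conditions in degree $t$).

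The remaining work is to upgrade this from that one distinguished $t$ to \emph{all} $t$, which is what the ``for all $t$'' in the statement demands. For this I would invoke Theorem \ref{t. AV is an O-sequence}: both $AV_{X,0}$ and $AV_{X\cup C,0}$ are (shifted) $O$-sequences, and in fact $AV_{X,0}(s) = h_{R/(\gin(I_X):x_0)}(s-1)$, similarly for $X\cup C$. An $O$-sequence that arises as the Hilbert function of an artinian quotient (which $R/(\gin(I_X):x_0)$ is, since $X$ is a finite set of points) is eventually zero, and for a finite set of points the function $AV_{X,0}$ is supported precisely on the degrees where cones can be unexpected. For $t$ below the distinguished value, condition (i) or (ii) of Theorem \ref{t.C U X} fails: if $|X| \geq \binom{t+2}{2}$ then the expected dimension for cones is already $0$ in a forced way and one checks $AV_{X,0}(t) = \adim(X,t,t)$ with no room for unexpectedness, so both sides vanish; and correspondingly $X\cup C$ in degree $t+d$ also has vanishing $AV$. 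So the cleanest route is: for the distinguished $t_0$, equality holds by Theorem \ref{t.C U X} with the correction term $0$; for $t > t_0$, equality of the two (shifted) $O$-sequences from $t_0$ onward follows because each is determined by its value at $t_0$ together with the same structural constraint — more precisely, run the same residuation argument as in the proof of Theorem \ref{t.C U X} for each such $t$, noting that once $X$ imposes independent conditions in degree $t_0$ it does so in all higher degrees, and $C$ (being ACM, as a plane curve) imposes independent conditions on forms of degree $d$ in all degrees $\geq d$; for $t < t_0$, both sides are $0$ by the observations above.

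Concretely I would structure the proof as two short paragraphs: (1) reduce the ``if and only if'' for unexpectedness to the numerical identity $AV_{X,0}(t) = AV_{X\cup C,0}(t+d)$ via Definition \ref{unexpDef} and Remark \ref{equiv to unexp} (unexpected cone of degree $t$ $\iff$ $AV_{X,0}(t)>0$ together with $\adim(X,t,t)>0$, and the latter positivity is automatic once $AV_{X,0}(t)>0$); (2) prove the numerical identity for all $t$ by repeating the residuation computation of Theorem \ref{t.C U X} — writing a degree-$(t+d)$ cone $T_P \supseteq X\cup C$ as $S_P \cup U_P$ with $S_P$ the cone over the plane curve $C$ and $U_P$ a degree-$t$ cone through $X$ — and feeding in $g = \binom{d-1}{2}$; the independent-conditions bookkeeping goes through verbatim because $C$ is ACM of degree $d$ and hence imposes $\min\{\binom{s-d+2}{2}+\cdots, \dim[R]_{s}\}$-many independent conditions in every degree $s \geq d$ — actually, since $C$ is a reduced plane curve, $h_C(s) = ds - \binom{d-1}{2}+1 = \binom{s+2}{2} - \binom{s-d+2}{2}$ for $s \geq d-1$, which is all one needs.

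The main obstacle I anticipate is \textbf{the ``for all $t$'' extension}: Theorem \ref{t.C U X} as stated pins down only one value of $t$, and one must be careful that the residuation argument (``the line $PQ$ meets $T_P$ with multiplicity $\geq t+e+1$ so lies on it, hence $S_P \subseteq T_P$'') and the independent-conditions lemma still apply uniformly. The key points making this work are that (a) a plane curve is ACM so $C$ imposes independent conditions on forms of degree $s$ for every $s \geq d$, not just asymptotically, removing any need for the \cite{GLP}-type threshold; (b) if $X$ imposes independent conditions in one degree it does so in all higher degrees; and (c) for the small degrees $t < t_0$ both $AV$ values are genuinely $0$ (here the case $|X| \geq \binom{t+2}{2}$ needs a one-line check that there is then no room for an unexpected cone). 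Everything else is the routine binomial calculation already carried out in Theorem \ref{t.C U X}, now with the correction term identically zero.
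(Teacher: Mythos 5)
Your core step is exactly the paper's proof: the entire published argument for this corollary is the observation that a reduced plane curve of degree $d$ has arithmetic genus $g=\binom{d-1}{2}$, so the correction term $\binom{e-1}{2}-g$ in Theorem \ref{t.C U X} vanishes. Your extension to every $t$ at or above the threshold $t_0$ is also sound, since hypotheses (i) and (ii) of Theorem \ref{t.C U X} are both monotone in $t$ and the residuation argument (every degree-$(t+d)$ cone through $X\cup C$ splits off the reduced cone over $C$) applies verbatim in that range; this is more care than the paper itself takes. One small slip there: $AV_{X,0}(t)>0$ does not by itself force ${\adim}(X,t,t)>0$ (six general points in $\PP^3$ have $AV_{X,0}(1)=1$ but lie on no plane through a general point $P$); what makes ${\adim}$ automatically positive in the relevant range is hypothesis (i), $|X|<\binom{t+2}{2}$, not positivity of $AV$.

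The genuine gap is your treatment of $t<t_0$. It is false that both sides vanish there: $AV_{X,0}(t)=\dim[R/(I_X+I_P^t)]_t$ is typically positive in low degrees (it equals $1$ at $t=1$ for every nondegenerate $X$), and, worse, the asserted equality itself can fail below the threshold. Take $X$ to be four points on a line $\ell$ and $C$ a smooth conic disjoint from $\ell$, so $d=2$, and take $t=1$. Then $AV_{X,0}(1)=0$ and $X$ admits no unexpected plane; but any cubic through the four collinear points contains $\ell$, so $\dim[I_{X\cup C}]_3=\dim[I_{\ell\cup C}]_3=9$, giving ${\vdim}(X\cup C,3,3)=9-\binom{5}{3}=-1$, while ${\adim}(X\cup C,3,3)=1$ (the cone over $C$ union the plane spanned by $\ell$ and $P$), so $AV_{X\cup C,0}(3)=2$ and $X\cup C$ has an unexpected cubic cone. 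Hence no argument can close the small-$t$ case: the ``for all $t$'' (and the unrestricted ``if and only if'') must be read with the hypotheses of Theorem \ref{t.C U X} in force, i.e.\ for $t$ with $|X|<\binom{t+2}{2}$ and $X$ imposing independent conditions in degree $t$ --- which is exactly the range your first two steps do cover, and is all that the paper's own one-line proof delivers.
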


\begin{proof}
The arithmetic genus of a plane curve of degree $e$ is $\binom{e-1}{2}$.
\end{proof}


\section{Finite sets of points in $\PP^3$} \label{s.Finite sets of points in P3}

In this section we assume that $K$ has characteristic zero. 
One of the original motivations for this paper was to determine if there are any Hilbert 
functions for non-degenerate sets of points that {\em force} the existence of unexpected hypersurfaces of some sort. 
(A consequence of 
Corollary \ref{degenerate} is that given a finite $O$-sequence $(1,a_1,\dots,a_r)$, 
one can trivially find a set of points in some projective space, with this $h$-vector, that does 
not admit any unexpected hypersurfaces. One simply produces a set $X$ in $\PP^n$ for $n > a_1$ 
having this $h$-vector. Then $X$ is degenerate, hence admits no unexpected hypersurfaces. 
Thus it is more interesting to consider non-degenerate sets.)

It now seems plausible that in a strict sense there are no such Hilbert functions. Indeed,
we make the following conjecture (but see Theorem \ref{force unexp} and Corollary \ref{LGP}, which show that such Hilbert functions
do arise when combined with a little geometric information).

\begin{conjecture} \label{conj about existence}
For every possible $h$-vector $(1,n,a_2,\dots,a_r)$  for a non-degenerate, finite set of 
points in $\PP^n$,  there is a set of points $X$ with that $h$-vector such that $X$ 
does not admit any unexpected hypersurfaces of any degree and multiplicity.
\end{conjecture}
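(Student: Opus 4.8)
The plan is to reduce the conjecture to a realization statement for generic initial ideals and then attack that statement via distractions. First, fix an $h$-vector $\underline h=(1,n,a_2,\dots,a_r)$ that occurs for some non-degenerate finite set of points in $\PP^n$. Since $a_1=n$, any set of points in $\PP^n$ with $h$-vector $\underline h$ satisfies $\dim[R/I_X]_1=n+1$, hence $[I_X]_1=0$ and is automatically non-degenerate; so it suffices to exhibit \emph{any} reduced set of points $X\subseteq\PP^n$ with $h$-vector $\underline h$ that admits no unexpected hypersurfaces. Let $L=L_{\underline h}\subseteq R$ be the lex-segment ideal whose Hilbert function equals that of $R/I_X$ for such a set, so that $[L]_1=0$. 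By Proposition~\ref{p.lex segments are bad for unexpectedness}, the conjecture follows if one can produce, for every such $\underline h$, a \emph{reduced} set of points $X$ with $\gin(I_X)=L$: then $\gin(I_X)$ is a lex-segment ideal and $X$ has no unexpected hypersurfaces of any degree or multiplicity. A weaker target, via Proposition~\ref{AV_X,0(alpha)}, would be a reduced $X$ with $AV_{X,0}(\alpha(I_X))=0$; I would keep this in reserve, but controlling the full gin is cleaner and more uniform.

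The proof thus reduces to the assertion that, for every Hilbert function of a non-degenerate finite set of points, the associated lex-segment ideal is $\gin(I_X)$ for some reduced point set $X$. My candidate is the scheme defined by a \emph{generic distraction} $D(L)$ of $L$. In characteristic zero $D(L)$ flatly degenerates to the monomial ideal $L$, so $D(L)$ has the same Hilbert function (hence the same $h$-vector) and the same graded Betti numbers as $L$; since $L$ is Borel-fixed and the distraction is general, one obtains $\gin(D(L))=L$; and $[D(L)]_1=[L]_1=0$, so $X=V(D(L))$ is a non-degenerate, $0$-dimensional subscheme with $h$-vector $\underline h$. The only remaining point is that $D(L)$ be a \emph{radical} ideal, i.e. that $X$ be reduced; granting that, $X$ is the required witness and the conjecture follows.

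The hard part, and the reason this is still a conjecture, is precisely this last point: showing that a generic distraction of a saturated lex-segment ideal with the Hilbert function of a non-degenerate set of points defines a reduced scheme --- equivalently, that every such Hilbert-function stratum contains a reduced point set attaining the maximal (lex) generic initial ideal. In codimension two (points in $\PP^2$) this is accessible: one builds $X$ explicitly as points distributed on a nested family of lines mimicking the lex staircase and checks $\gin(I_X)=L$ directly. In higher codimension there is no general principle forcing a generic unpolarization of the (squarefree, radical) polarization of $L$ to remain radical --- generic projections and linear sections of reduced unions of coordinate linear spaces can acquire excess or non-reduced components --- and the alternative of passing to a ``most special'' reduced member of the Hilbert-function stratum is obstructed by the possible reducibility of such strata, so upper semicontinuity of the gin alone does not yield the lex ideal. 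I would therefore first settle the plane case and the cases where $L$ is realized by an ACM configuration with computable gin, pursue in parallel the weaker Proposition~\ref{AV_X,0(alpha)}-version, and regard the general statement as hinging on a clean criterion for when generic distractions of lex ideals are reduced.
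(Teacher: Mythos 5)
This statement is a conjecture; the paper offers no proof of it, and your proposal, as you yourself note, does not close it either. Your reduction is in fact the very route the authors describe: Corollary \ref{recall gin} (i.e.\ Proposition \ref{p.lex segments are bad for unexpectedness}) reduces the conjecture to producing, for each admissible $h$-vector, a reduced point set whose lex gin is the lex-segment ideal, and the authors' candidate is exactly the distraction of the artinian lex-segment ideal. So there is nothing genuinely new in the strategy, and the substantive issue is whether the strategy can work.

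Where your analysis goes wrong is in locating the obstruction. Reducedness of the distraction is not the problem: a distraction in the sense used here (\cite{hartshorne}, \cite{MN2}) converts an artinian monomial ideal in $K[x_0,\dots,x_{n-1}]$ into the ideal of a \emph{reduced} set of points in $\PP^n$ by construction, so the witness $X$ you propose is automatically a reduced, non-degenerate point set with the right $h$-vector. The step that actually fails is your claim that $\gin(D(L))=L$ ``since $L$ is Borel-fixed and the distraction is general'': after lifting to one more variable this is simply not true in general, and the paper records an explicit counterexample. For the $h$-vector $(1,3,6,10,5,5,2)$ the distraction of the lex-segment ideal yields a set of points whose gin is \emph{not} a lex-segment ideal, and that set even admits an unexpected hypersurface of degree $4$ with multiplicity $3$; the conjecture survives for this $h$-vector only because a different configuration (22 general points on a plane quintic together with 10 general points) has the same $h$-vector and no unexpected hypersurfaces. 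This also shows that your reduction asks for more than the conjecture needs: realizing the lex-segment gin is sufficient but possibly strictly stronger than the existence of \emph{some} point set without unexpected hypersurfaces, and in the known problematic case the authors succeed only through the weaker goal (e.g.\ via criteria such as Proposition \ref{AV_X,0(alpha)}), not through the gin being lex. So the proposal cannot be repaired merely by a criterion for reducedness of distractions; one needs either a genuinely different construction for the $h$-vectors where the distraction fails, or a different sufficient criterion for non-unexpectedness.
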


In trying to prove Conjecture \ref{conj about existence} we made the following observations. 
Recall that a {\em distraction} is a construction that converts, in particular,  an artinian monomial 
ideal in $K[x_0,\dots,x_{n-1}]$ to the ideal of a reduced set of points in $\PP^n$.  It was 
introduced in \cite{hartshorne}. See also \cite{MN2} for related constructions and results.

One way of constructing a reduced set of points with a given $h$-vector is 
to start with the artinian lex-segment ideal with Hilbert function $h$ and perform a distraction to 
produce a set of points $X$. Experimentally, it seems that very often $\hbox{gin}(I_X)$ is a 
lex-segment ideal. If this were always the case, we would be done by Corollary \ref{recall gin}:

\begin{corollary} \label{recall gin}
Let $h= (1,n, a_2,\dots,a_r)$ be a finite $O$-sequence. Let $X \subset \PP^n$ be a 
set of points with this $h$-vector. If $\gin(I_X)$ is a lex-segment ideal in 
$R = K[x_0,x_1,\dots,x_n]$, then $X$ does not admit any unexpected hypersurfaces, 
for any degree and multiplicity.
\end{corollary}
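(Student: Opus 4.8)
The plan is to derive Corollary \ref{recall gin} as an immediate consequence of Proposition \ref{p.lex segments are bad for unexpectedness} together with Definition \ref{unexpDef}, so that essentially no new work is required. First I would recall the hypothesis: $X \subset \PP^n$ is a finite set of points whose $h$-vector is $h = (1, n, a_2, \dots, a_r)$, and $\gin(I_X)$ is a lex-segment ideal in $R = K[x_0, \dots, x_n]$, where as throughout this section the gin is taken with respect to the lexicographic order.

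Next I would invoke Proposition \ref{p.lex segments are bad for unexpectedness} directly. That proposition states that if $\gin(I_X)$ is a lex-segment ideal then $X$ does not admit any unexpected hypersurfaces. Since our hypothesis is precisely that $\gin(I_X)$ is a lex-segment ideal, the conclusion follows verbatim. Concretely, for any pair $(t, m)$ with $t \geq m \geq 1$, the component $[\gin(I_X)]_{t}$ is a lex-segment (because $\gin(I_X)$ is a lex-segment \emph{ideal}, every graded piece is a lex-segment), so Proposition \ref{p.lex segments are bad for unexpectedness} applied with $j = t - m$ gives $AV_{X,j}(m) = 0$ whenever $\mathrm{adim}(X, t, m) > 0$; and when $\mathrm{adim}(X,t,m) = 0$ there is trivially no unexpected hypersurface of degree $t$ for multiplicity $m$, since by Definition \ref{unexpDef} admitting one forces $\mathrm{adim}(X,t,m) > 0$. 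By Remark \ref{equiv to unexp}, $X$ admits an unexpected hypersurface of degree $t$ for multiplicity $m$ precisely when $\mathrm{adim}(X,t,m) > 0$ and $AV_{X,t-m}(m) = \mathrm{adim}(X,t,m) - \mathrm{vdim}(X,t,m) > 0$, and we have just ruled this out for every $(t,m)$.

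Since this is a corollary with a one-line proof, I do not anticipate any real obstacle; the only thing to be careful about is making sure the logical flow correctly handles the two cases $\mathrm{adim}(X,t,m) = 0$ and $\mathrm{adim}(X,t,m) > 0$ separately, as I did above, rather than appealing to the proposition in a case where its hypothesis $\mathrm{adim}(X,m+j,m) > 0$ fails. The role of the $h$-vector hypothesis is purely to fix the setup (that $X$ is a finite nondegenerate — or possibly degenerate — set of points with a prescribed Hilbert function); it is not actually used in the argument beyond guaranteeing that $\gin(I_X)$ makes sense, which makes the statement a clean packaging of Proposition \ref{p.lex segments are bad for unexpectedness} tailored for use in the discussion of Conjecture \ref{conj about existence}.

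\begin{proof}
By Remark \ref{equiv to unexp}, $X$ admits an unexpected hypersurface of degree $t$ for multiplicity $m$ (with $t \geq m \geq 1$) if and only if ${\adim}(X,t,m) > 0$ and ${\adim}(X,t,m) > {\vdim}(X,t,m)$, i.e., if and only if ${\adim}(X,t,m) > 0$ and $AV_{X,t-m}(m) > 0$. If ${\adim}(X,t,m) = 0$, there is nothing to prove. If ${\adim}(X,t,m) > 0$, then since $\gin(I_X)$ is a lex-segment ideal, its graded component $[\gin(I_X)]_{m+j}$ is a lex-segment for $j = t-m$, so Proposition \ref{p.lex segments are bad for unexpectedness} gives $AV_{X,j}(m) = 0$. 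In either case $X$ does not admit an unexpected hypersurface of degree $t$ for multiplicity $m$. As $(t,m)$ was arbitrary, $X$ admits no unexpected hypersurfaces of any degree and multiplicity.
\end{proof}
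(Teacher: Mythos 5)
Your proof is correct and follows the same route as the paper, which simply cites Proposition \ref{p.lex segments are bad for unexpectedness} (whose ``in particular'' clause already contains the conclusion). Your extra care in separating the cases ${\adim}(X,t,m)=0$ and ${\adim}(X,t,m)>0$ is a reasonable elaboration of the same one-line argument.
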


\begin{proof}
This follows immediately from Proposition \ref{p.lex segments are bad for unexpectedness}.
\end{proof}

Unfortunately, we have verified that the $h$-vector {\tt (1,3,6,10,5,5,2)} results in a set of 
points for which $\gin(I_X)$ is not a lex-segment ideal, and in fact one can check on \cocoa\  
that it admits an unexpected hypersurface of degree 4 with multiplicity 3. However, we were able to 
confirm that the union in $\PP^3$ of 22 general points on a plane curve of degree 5 and 10 
general points in $\PP^3$ results in a set of points with the desired $h$-vector, which 
does not admit any unexpected hypersurfaces, so the conjecture is still true for this $h$-vector 
even though the distraction does not produce the desired set of points. This should be 
contrasted with the end of Example \ref{1366332}; in this case we verified that the 
distraction does produce a set of points whose gin is a lex-segment. Thus the conjecture remains open.

We recall a result from \cite{BGM}, modified to fit our context. For a set of points $X$ we 
denote by $\langle [I_X]_{\leq d} \rangle$ the ideal generated by the polynomials in $I_X$ of 
degree $\leq d$. Recall also that for a subscheme $Y$  of $\PP^n$ we denote by $h_Y (t)$ its Hilbert function.

\begin{proposition}[\cite{BGM} Theorem 3.6] \label{BGM thm}
Let $X \subset \PP^3$ be a reduced, finite set of points with $h$-vector
\[
(1,3,a_2,a_3,\dots,a_k,d,d, a_{k+3},\dots,a_r)
\]
where $d \leq k+1$. Then
\begin{itemize}

\item[\rm (a)] $\langle [I_X]_{\leq k+1} \rangle$ is the saturated ideal of a reduced curve, $V$, of 
degree $d$ (not necessarily unmixed). Also, $I_X$ has no minimal generators in degree $k+2$, 
so $\langle [I_X]_{\leq k+1} \rangle = \langle [I_X]_{\leq k+2} \rangle$.

\item[\rm (b)] Let $C$ be the unmixed, one-dimensional part of $V$. Let $X_1$ be the subset 
of $X$ on $C$ and let $X_2$ be the subset of $X$ not on $C$; note $X = X_1 \cup X_2$. 
Then $\langle [I_{X_1}]_{\leq k+1} \rangle = I_C$, and  $V = C \cup X_2$.

\item[\rm (c)] $h_{X_1} (t) = h_X(t) - |X_2|$ for all $t \geq k$.

\item[\rm (d)] 
\[
\Delta h_{X_1} (t) = 
\left \{
\begin{array}{ll}
\Delta h_C (t) & \hbox{for } t \leq k+2; \\
\Delta h_X (t) & \hbox{ for } t \geq k+1.
\end{array}
\right.
\]

\end{itemize}

\end{proposition}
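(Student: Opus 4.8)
The plan is to recognize this as a restatement of \cite[Theorem~3.6]{BGM} in the language of $h$-vectors of point sets, so that the only thing requiring computation is the translation of the hypothesis; the conclusions then transport directly. First I would check that the numerical assumption is exactly the ``maximal growth'' hypothesis used in \cite{BGM}. For a finite set $X\subset\PP^3$ the $h$-vector $\Delta h_X$ is the Hilbert function of a three-variable artinian reduction of $R/I_X$, hence a finite $O$-sequence, so by Theorem~\ref{macaulay thm} it satisfies $\Delta h_X(k+2)\le\big(\Delta h_X(k+1)\big)^{\langle k+1\rangle}$. Since $d\le k+1$, the greedy algorithm gives the $(k+1)$-Macaulay representation $d=\binom{k+1}{k+1}+\binom{k}{k}+\cdots+\binom{k+2-d}{k+2-d}$ (a sum of $d$ ones), whence $d^{\langle k+1\rangle}=\binom{k+2}{k+2}+\binom{k+1}{k+1}+\cdots+\binom{k+3-d}{k+3-d}=d$. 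Thus $\Delta h_X(k+1)=\Delta h_X(k+2)=d$ says precisely that the Hilbert function of $X$ grows maximally from degree $k+1$ to $k+2$, in the range where \cite{BGM} applies, and that theorem immediately yields (a): the ideal $\langle[I_X]_{\le k+1}\rangle$ is saturated, $I_X$ has no minimal generator in degree $k+2$ (the persistence half of the statement), and it cuts out a reduced one-dimensional scheme $V$ of degree $d$.

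For (b) I would note that $I_V=\langle[I_X]_{\le k+1}\rangle\subseteq I_X$ forces $X\subseteq V$ scheme-theoretically; writing $V=C\cup W$ with $C$ the union of the one-dimensional (height-two) components and $W$ the finite set of isolated points of $V$, one has $\deg C=d$ since $W$ is finite. Every point of the reduced set $X$ either lies on $C$ --- these form $X_1$ by definition --- or is an isolated point of $V$, and the structural part of \cite{BGM} shows conversely that every isolated point of $V$ is one of the points of $X$ lying off $C$, so $W=X_2$ and $V=C\cup X_2$. Applying (a) to the subconfiguration $X_1\subset C$, whose forms of degree $\le k+1$ agree with those of $I_C$ because the extra points $X_2$ impose no conditions in those degrees, gives $\langle[I_{X_1}]_{\le k+1}\rangle=I_C$.

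Parts (c) and (d) are then Hilbert-function bookkeeping. Since $X_1$ and $X_2$ are disjoint and $X_2$ imposes $|X_2|$ independent conditions on $[I_{X_1}]_t$ for $t\ge k$ (a regularity argument, using that $C\supseteq X_1$ is cut out in degrees $\le k+1$), we get $h_{X_1}(t)=h_X(t)-|X_2|$ for $t\ge k$, which is (c); and (d) follows because $[I_{X_1}]_t=[I_C]_t$ for $t\le k+2$ by (b) together with the no-new-generators statement, while for $t\ge k+1$ the difference $h_X-h_{X_1}$ is the constant $|X_2|$ by (c), so the first differences $\Delta h_{X_1}$, $\Delta h_C$, $\Delta h_X$ agree on the indicated ranges. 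The main obstacle is the structural heart of (a)--(b): producing the curve $C$ of degree $d$ containing $X_1$ and identifying the isolated points of the auxiliary scheme $V$ with exactly $X_2$. This is where the maximal-growth hypothesis is used essentially, and the efficient course is to invoke \cite{BGM} for it; the rest of the argument is formal once (a) and (b) are in hand.
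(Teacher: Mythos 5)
Your proposal is correct and matches the paper's treatment: the paper gives no independent proof, simply quoting \cite{BGM}, Theorem 3.6 (the hypothesis $\Delta h_X(k+1)=\Delta h_X(k+2)=d\le k+1$ being exactly the maximal-growth condition, as your Macaulay-representation computation $d^{\langle k+1\rangle}=d$ confirms), and you likewise defer the structural content to that citation. The only difference is that you additionally sketch how (c) and (d) would follow from (a)--(b), which is harmless since those parts are also contained in the cited theorem.
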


From now on we focus on the special case $j=0$, i.e. when the degree of the unexpected 
hypersurface is equal to the multiplicity at the general point. The following example shows 
that knowing the $h$-vector of a finite set of points, and taking into account the fact that the 
base locus of some component of $I_X$ contains a curve, is not enough
to ensure that $X$ has an unexpected surface. 
We generally have to know something more about the curve.

\begin{example} \label{1366332}
Consider the $h$-vector
\[
(1,3,6,6,3,3,2).
\]
The values in degrees 4 and 5 force the existence of a cubic curve of some sort in the 
base locus of $[I_X]_4$ and of $[I_X]_5$ for any finite set $X$ with this $h$-vector. We 
will look at a few different kinds of cubic curves to see how the differences in the geometry 
of the curves gives different behavior with respect to unexpected hypersurfaces (specifically cones). 
Our goal is not to give an exhaustive list of possible sets of points 
with this $h$-vector, but rather to highlight a few to see how they differ.
So consider the following  sets of points sharing this $h$-vector. 

\begin{itemize}
\item Let $X$ consist of 18 points on a twisted cubic $C$ (note that the  
$h$-vector of these 18 points is (1,3,3,3,3,3,2)) plus 6 general points. 
Then we claim that $X$ admits no unexpected cone of any degree. 

Notice that 
\[
\dim [I_X ]_t  = \left \{
\begin{array}{ll}
4 & \hbox{if } t = 3; \\
16 & \hbox{if } t = 4; \\
34 & \hbox{if } t = 5; \\
\binom{t+3}{3} - 24 & \hbox{if } t \geq 6 .
\end{array}
\right.
\]
We first consider the case $t \leq 5$, and we notice ${\edim} (X,t,t) = 0$ in this case.
Since any cone of degree $\leq 5$ containing $X$ must also contain $C$, it also contains 
the cone over $C$ (which is a surface of degree 3) as a component. But the projection 
from $P$ of the 6 general points gives 6 general points in the plane, and there is no conic 
through 6 general points. Thus $\dim [I_{X \cup P^t}]_t = 0$ for $t \leq 5$ so ${\adim}(X,t,t) = {\edim}(X,t,t)=0$.

Now let $t \geq 6$. We have
\[
{\vdim} (X,t,t) = \binom{t+3}{3} - 24 - \binom{t+2}{3} = \binom{t+2}{2} - 24 > 0.
\]
Now, the projection of the points on $C$ gives a set of points with 
$h$-vector \linebreak $(1,2,3,3,3,3,3)$ so adding six general points gives a set with 
$h$-vector $(1,2,3,4,5,6,3)$, hence  the general projection imposes independent 
conditions on plane curves of degree $t$. Thus the vector space dimension of this 
linear system (hence the vector space dimension of the family of cones of degree $t$ with vertex $P$) is the expected one.

Notice that if the $h$-vector had been $(1,3,6,6,3,3,3)$ then the projection would 
have $h$-vector $(1,2,3,3,3,3,3,1)$ and the above argument would not work for $t=6$. 
Indeed, Theorem \ref{force unexp} gives an unexpected sextic cone.

\medskip

\item Let $C$ be a set of three disjoint lines.  Let $X$ consist of 6 points on one of the 
lines and 7   points on each of the remaining lines, chosen generally, together with 4 
general points in $\PP^3$. (The $h$-vector of the points on $C$ is (1,3,5,3,3,3,2), 
and the $h$-vector of $X$ is again $(1,3,6,6,3,3,2)$.) Then the expected dimension in 
degree 5 is 0 as before, but since there is a pencil of conics through four general points in 
the plane we obtain ${\adim}(X,5,5) = 2$, i.e. there is a pencil of unexpected cones of degree 5. 

\medskip

\item Let $C$ be a smooth plane cubic curve and let $X$ consist of 17 points on $C$ 
(with $h$-vector (1,2,3,3,3,3,2)) plus a set $X_1$ of 7 general points in $\PP^3$. 
One can check that $1 = AV_{X,0}(5) = AV_{C \cup X_1,0} (5) = AV_{X_1,0}(2)$ in 
accordance with Corollary \ref{X U plane curve} but that there is no unexpected hypersurface because ${\adim}(X_1,2,2) = 0$.

\medskip

\item Let $C$ be a smooth plane cubic curve in $\PP^3$, and let $\lambda_1$ 
and $\lambda_2$ be general lines in $\PP^3$. Let $X$ consist of  17 general 
points on $C$, plus a subset $X_1$ of four general points on $\lambda_1$ and three 
general points on $\lambda_2$. One can check that $X$ also has the $h$-vector 
$(1,3,6,6,3,3,2)$ so we expect no surface of degree 5 with a point of multiplicity 5 
at a general point $P$. However, the cone over $C \cup \lambda_1 \cup \lambda_2$ 
is such a surface. But notice that  the one-dimensional 
component of the base locus of $[I_X]_5$ is only the plane cubic.

\end{itemize}

\noindent Thus the $h$-vector $(1,3,6,6,3,3,2)$ may or may not force an unexpected 
cone, depending mostly, but not entirely, on the cubic curve that is forced by the $h$-vector. 

It is worth noting that the $h$-vector $(1,3,6,5,3,3,2)$  (analyzed as above) admits 
an unexpected cone even when the cubic curve is a twisted cubic, and $(1,3,6,6,3,3,2)$ 
admits an unexpected cone even when the cubic is a plane cubic.

\end{example}

As mentioned at the beginning of this section, we do not believe that any finite 
$O$-sequence forces the existence of unexpected hypersurfaces for non-degenerate 
sets of points. However, some sequences force the existence of a curve in the base 
locus of at least  some components of the ideal, and if this curve is not a plane curve 
then we {\em can} find finite $O$-sequences that  force unexpected hypersurfaces. 
This idea is elementary, but a bit technical. Thus we will first look at an example, to 
make the proof of Theorem \ref{force unexp} clearer. 
We will refer to the notation of Theorem \ref{force unexp} in this example.

\begin{example}
Consider sets of points $X$ with the $h$-vector
\[
(1,3,6,9,8,7,\underbrace{6,6,\dots,6}_\ell)
\]
where $\ell \geq 6$. In the notation of Theorem \ref{force unexp} we have $k = 5$, $d = 6$, $N = (6-4) + (9-5) + (8-6) + (7-6) = 9$ and $m = 3$. Because of the values of this $h$-vector, Proposition \ref{BGM thm}  applies. We get that for $6 \leq t \leq \ell+5$,  $(I_X)_{\leq t}$  has a 1-dimensional base locus, $C$, of degree 6, together with a finite set $X_2$, which imposes independent conditions on hypersurfaces of degree $\geq 5$. In fact
\[
(I_X)_{\leq t} = (I_{C \cup X_2})_{\leq t}
\]
for $6 \leq t \leq \ell +5$.
Furthermore, using Lemma \ref{bound X2} as in the proof of Theorem \ref{force unexp}, we see that $|X_2| \leq N = 9$. {\em Assume that $C$ is not a plane curve.}

Now we look in degree 11, which is in the range $6 \leq t \leq \ell+5$.
Applying Theorem \ref{t.C U X} we obtain
\[
AV_{X,0}(11) = AV_{X_2 \cup C,0}(11) = AV_{X_2,0} (5) + \left [ \binom{6-1}{2} - g \right ] > 0, 
\]
where $g$ is the arithmetic genus of $C$. The fact that this is positive follows since $C$ is not a plane curve. 

We remark  that this works because the degree 11 is such that the value of the $h$-vector is still $6$ in that degree. Beyond degree $k+\ell$ there is no longer a curve, and Theorem \ref{t.C U X} no longer applies.

Now let $P$ be a general point in $\PP^3$ and consider the projection from $P$ to a general $\PP^2$. The image of $X_2$ is thus a set of $\leq 9$ points in the plane, and as such it lies on a plane curve of degree $m=3$ and hence also a plane curve of degree $k = 5$. The cone over this curve is a surface of degree 5 containing $X_2$ with multiplicity 5 at $P$. Together with the cone over $C$ (which has degree 6), we have a surface of degree 11 having multiplicity 11 at $P$. This means $\hbox{adim} (X,11,11) > 0$. Since also $AV_{X,0}(11) >0$, $X$ admits an unexpected cone of degree 1.
\end{example}

\begin{lemma}[\cite{BGM} Lemma 3.1] \label{BGM lemma}  
Let $I \subset R$ be an ideal satisfying $h_{R/I}(t) = \binom{t+m}{t}$ and $h_{R/I}(t+1) = \binom{t+1+m}{t+1}$. 
Then $[I]_t$ is the degree $t$ component of the saturated ideal of an $m$-dimensional linear space in $\PP^n$ (and similarly for $[I]_{t+1}$).
\end{lemma}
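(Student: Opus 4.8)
The plan is to pin down the Hilbert polynomial of a suitable quotient of $R$ using Gotzmann's persistence theorem, and then to recognize the corresponding subscheme as a linear space by a dimension count. Assume $t\geq 1$ (the case $t=0$ is immediate, since then the hypothesis says $\dim_K[I]_1=(n+1)-(m+1)$ and $[I]_1$ is exactly the space of linear forms cutting out an $m$-plane). First I would pass from $I$ to the homogeneous ideal $J=\langle [I]_{\leq t}\rangle$ generated by all elements of $I$ of degree at most $t$. Then $[J]_t=[I]_t$, so $h_{R/J}(t)=\binom{t+m}{t}$, while $[J]_{t+1}=R_1\cdot[I]_t\subseteq [I]_{t+1}$ gives $h_{R/J}(t+1)\geq h_{R/I}(t+1)=\binom{t+1+m}{t+1}$. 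On the other hand Macaulay's bound (Theorem \ref{macaulay thm}) applied to $R/J$ yields $h_{R/J}(t+1)\leq h_{R/J}(t)^{\langle t\rangle}$, and since $\binom{t+m}{t}$ is already a single $t$-Macaulay binomial one computes $\binom{t+m}{t}^{\langle t\rangle}=\binom{t+m+1}{t+1}=\binom{t+1+m}{t+1}$. Hence $h_{R/J}(t+1)=\binom{t+1+m}{t+1}$, i.e.\ $R/J$ attains equality in Macaulay's bound from degree $t$ to degree $t+1$, and by construction $J$ is generated in degrees $\leq t$.

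Now I would invoke Gotzmann's persistence theorem: for a homogeneous ideal generated in degrees $\leq t$, the equality $h_{R/J}(t+1)=h_{R/J}(t)^{\langle t\rangle}$ forces $h_{R/J}(s+1)=h_{R/J}(s)^{\langle s\rangle}$ for all $s\geq t$. Iterating the Macaulay operation starting from the single binomial $\binom{t+m}{t}$ produces $h_{R/J}(s)=\binom{s+m}{m}$ for every $s\geq t$; in particular the Hilbert polynomial of $R/J$ is $P(s)=\binom{s+m}{m}$, which is the Hilbert polynomial of a linearly embedded $\PP^m\subseteq\PP^n$. Let $\Lambda$ be the closed subscheme of $\PP^n$ cut out by $J$. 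Its Hilbert polynomial has leading term $s^m/m!$, so $\dim\Lambda=m$ and $\deg\Lambda=1$; by the classical characterization of linear spaces via the Hilbert polynomial (a general $m$-fold hyperplane section of $\Lambda$ is a single reduced point, so $\Lambda$ is reduced, irreducible, $m$-dimensional and of degree $1$, hence a linear $\PP^m$) we conclude $\Lambda$ is an $m$-plane.

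Finally, $[I]_t=[J]_t\subseteq [I_\Lambda]_t$, and $\dim_K[I_\Lambda]_t=\binom{t+n}{n}-h_{R/I_\Lambda}(t)=\binom{t+n}{n}-\binom{t+m}{m}=\binom{t+n}{n}-h_{R/J}(t)=\dim_K[I]_t$, so the inclusion is an equality and $[I]_t$ is the degree-$t$ component of $I_\Lambda$. The same argument in degree $t+1$ (or, equivalently, using that $I_\Lambda$ is generated by linear forms, so $[I_\Lambda]_{t+1}=R_1\cdot[I_\Lambda]_t\subseteq [I]_{t+1}$, together with the dimension count $\dim_K[I]_{t+1}=\binom{t+1+n}{n}-\binom{t+1+m}{m}=\dim_K[I_\Lambda]_{t+1}$) gives the parenthetical assertion. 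I expect the main obstacle to be the careful invocation of Gotzmann's persistence theorem and the final structural step identifying a subscheme with Hilbert polynomial $\binom{s+m}{m}$ as a linear space; both are standard, but the latter should either be argued via the hyperplane-section reduction indicated above or simply cited (for instance from \cite{BGM} itself, since this is their Lemma 3.1).
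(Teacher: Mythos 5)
The paper does not prove this statement at all: it is imported verbatim as \cite{BGM}, Lemma 3.1, so there is no internal argument to compare against. Your proof is correct and is essentially the standard (and, in substance, the original) one: maximal growth in the sense of Macaulay (Theorem \ref{macaulay thm}) for the truncated ideal $J=\langle [I]_{\le t}\rangle$, Gotzmann persistence to force $h_{R/J}(s)=\binom{s+m}{m}$ for all $s\ge t$, and then identification of the saturation with the ideal of an $m$-plane, followed by the dimension count in degrees $t$ and $t+1$. The only step you gloss is the identification of a subscheme with Hilbert polynomial $\binom{s+m}{m}$ as a linear space; the cleanest route is to note that dimension $m$ and degree $1$ force the top-dimensional part to be a reduced $m$-plane $L$, whence $J^{\mathrm{sat}}\subseteq I_L$ with equal Hilbert polynomials, and two saturated ideals agreeing in all large degrees coincide. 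That (or a citation, as you suggest) closes the argument; the rest, including the reversed-inclusion trick in degree $t+1$ using that $I_L$ is generated by linear forms, is fine.
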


\begin{lemma} \label{bound X2}
Let $C$ be a reduced, unmixed, non-degenerate curve in $\PP^3$ of degree $d$. 
Let $h_C(t)$ be its Hilbert function. Then $\Delta h_C(t)$ 
has a sharp lower bound as follows: If $d=2$ or 3 then the lower bound is (respectively) 
\[
\begin{array}{c|ccccccccccccccc}
\hbox{deg } t & 0 & 1 & 2 & 3 & 4 & 5 & \dots  \\ \hline
& 1 & 3 & 2 & 2 & 2 & 2 & \dots 
\end{array}
\ \ \ \hbox{ and } \ \ \ 
\begin{array}{c|ccccccccccccccc}
\hbox{deg } t & 0 & 1 & 2 & 3 & 4 & 5 & \dots  \\ \hline
& 1 & 3 & 3 & 3 & 3 & 3 & \dots 
\end{array}
\]
If $d \geq 4$ then the lower bound is
\[
\begin{array}{c|ccccccccccccccc}
\hbox{deg } t & 0 & 1 & 2 & 3 & 4 & \dots & d-3 & d-2 & d-1 & d & d & \dots \\ \hline
& 1 & 3 & 4 & 5 & 6 & \dots & d-1 & d & d & d & d & \dots 
\end{array}
\]
\end{lemma}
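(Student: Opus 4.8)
The plan is to pass to a general hyperplane section, where the assertion becomes a statement about the Hilbert function of a reduced set of $d$ points in $\PP^2$. Since $C$ is reduced and unmixed, $I_C$ is a saturated, unmixed ideal of codimension two, so $R/I_C$ has depth $\geq 1$ and all associated primes of codimension two; hence a general linear form $L$ is a nonzerodivisor on $R/I_C$. From $0\to (R/I_C)(-1)\stackrel{\cdot L}{\longrightarrow} R/I_C \to R/(I_C+(L))\to 0$ we get $\Delta h_C(t)=h_{S/J}(t)$, where $S:=R/(L)\cong K[y_0,y_1,y_2]$ and $J$ is the image of $I_C$ in $S$. The saturation of $J$ is $I_Z$, where $Z:=C\cap H$ is a reduced (by Bertini, as $\mathrm{char}\,K=0$) set of $d=\deg C$ points in the plane $H\cong\PP^2$; in particular $J\subseteq I_Z$, so $\Delta h_C(t)=h_{S/J}(t)\geq h_Z(t)$ for all $t$. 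Moreover, since $C$ is non-degenerate $[I_C]_1=0$, hence $[J]_1=0$ and $h_{S/J}(1)=3$.

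For $d=2$ I would argue directly: the only reduced, non-degenerate curve of degree $2$ in $\PP^3$ is a pair of skew lines, for which $\Delta h_C=(1,3,2,2,\dots)$; this is simultaneously the stated value and (trivially) the stated sharp bound. For $d\geq 3$ I would invoke the classical fact that the general hyperplane section of a non-degenerate, reduced, equidimensional curve of degree $\geq 3$ in $\PP^3$ is again non-degenerate, i.e.\ $Z$ is not contained in a line of $H$ (this is the general position theorem when $C$ is irreducible; for reducible $C$ one applies it to each component and checks that, for general $H$, the sections of the various components jointly span $H$, the only case requiring a moment's thought being a disjoint union of lines with at least three components). Granting this, $h_Z(1)=3$, and since $Z$ is a reduced set of $d$ points its Hilbert function is strictly increasing until it stabilizes at $d$, so $1=h_Z(0)<3=h_Z(1)<h_Z(2)<\cdots<h_Z(r)=d=h_Z(r+1)=\cdots$. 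Hence $h_Z(t)\geq\min(t+2,d)$ for $t\geq 1$, and combining with $h_{S/J}(t)\geq h_Z(t)$ and $h_{S/J}(0)=1$ gives exactly the tables in the statement (for $d=3$ this reads $1,3,3,3,\dots$, since $\min(t+2,3)=3$ for $t\geq 1$; for $d\geq 4$ it reads $1,3,4,5,\dots,d-1,d,d,\dots$).

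For sharpness when $d\geq 3$ I would exhibit curves attaining the bound in every degree: the twisted cubic for $d=3$ (where $h_C(t)=3t+1$, so $\Delta h_C=(1,3,3,\dots)$), and for $d\geq 4$ an arithmetically Cohen--Macaulay curve whose Artinian reduction has $h$-vector $(1,2,1,\dots,1)$ with $d-3$ trailing $1$'s, which exists because this is a finite $O$-sequence and may be taken reduced; its general hyperplane section is then $d-1$ collinear points together with one further general point, which indeed has Hilbert function $t\mapsto\min(t+2,d)$. The one genuinely non-elementary ingredient in the whole argument is the non-degeneracy of the general hyperplane section for $d\geq 3$ (and, in the reducible case, checking that the component sections span $H$); everything else reduces to the ``strictly increasing until constant'' property of the Hilbert function of a reduced finite set of points, recorded in the Notation section, together with routine bookkeeping.
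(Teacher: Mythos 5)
Your proof is correct, but it takes a genuinely different route from the paper. The paper never passes to the geometry of the hyperplane section: it works purely numerically with $\Delta h_C$ as the Hilbert function of $R/(I_C,L)$, uses Macaulay's growth theorem to show that $\Delta h_C(t)\leq t+1$ in the relevant range would prevent the sequence from ever reaching $d$ unless $\Delta h_C(t)=t+1$ exactly, and then invokes Lemma \ref{BGM lemma} (maximal growth forces a linear base locus) to conclude that equality would make $C$ a plane curve, contradicting non-degeneracy; sharpness is shown by the same example you implicitly use, a plane curve of degree $d-1$ union a line meeting it in one point. You instead bound $\Delta h_C(t)=h_{S/J}(t)\geq h_Z(t)$ by the Hilbert function of the reduced general section $Z$, reduce everything to the fact that $h_Z$ is strictly increasing until it reaches $d$, and carry the geometric weight in the claim that $Z$ spans $H$ for $d\geq 3$. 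That claim is true and your reduction is sound, but your parenthetical is a bit glib: besides pairwise skew lines, the configurations of concurrent non-coplanar lines and of a plane curve plus a component off its plane also need (equally short) arguments, and the irreducible case uses the general position theorem, so your route genuinely relies on characteristic zero, whereas the paper's Macaulay-plus-BGM argument does not. Your sharpness step for $d\geq 4$ (``an ACM curve with artinian $h$-vector $(1,2,1,\dots,1)$ exists and may be taken reduced'') is asserted rather than proved; it is standard, but you could avoid the appeal entirely by writing down the paper's explicit curve, whose $\Delta h_C$ is $(1,3,4,\dots,d,d,\dots)$ by a two-line inclusion--exclusion computation and whose general section is exactly the $d-1$ collinear points plus one extra point you describe. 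What your approach buys is a transparent geometric reading of the bound --- it is precisely the minimal Hilbert function of $d$ points in $\PP^2$ spanning the plane --- and the stronger statement $\Delta h_C\geq h_Z$; what the paper's buys is independence from general-position machinery at the cost of the citation to \cite{BGM}.
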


\begin{proof}
In all cases, since $C$ is non-degenerate we must have $\Delta h_C(1) = 3$.
If $d=2$ and $C$ is non-degenerate then $C$ must be a pair of disjoint lines, and the first 
given $\Delta h_C$ is its Hilbert function. (So this is precisely the Hilbert function and not a lower bound.) If $d=3$ and $\Delta h_C(t) \leq 2$ for any $t \geq 2$ then by Macaulay it can 
never grow to 3, which it must do since $\deg C = 3$. Thus the first two cases are done.

Notice that the given sequence is $\Delta h_C$ for the curve $C$ consisting of the union of a 
plane curve of degree $d-1$ and a line, meeting at one point. Thus this sequence occurs.

Let $I_C$ be the saturated ideal of $C$. Since $R/I_C$ has depth $\geq 1$, if $L$ is a 
general linear form then the first difference of $h_{C}(t)$ is the Hilbert function of $R/(I_C,L)$ 
and so is an $O$-sequence. We know $\Delta h_{C}(t) = d$ for $t \gg 0$. If $\Delta h_{C}(2) \leq 2$ then by 
Macaulay's theorem it can  never grow to $d$, so we must have $\Delta h_{C}(2) \geq 3$. 

Suppose $\Delta h_C(2) = 3$. In order to eventually reach $d$, by Macaulay's theorem 
we must have $\Delta h_C(t) = t+1$ for $2 \leq t \leq d-1$. Then by Lemma \ref{BGM lemma} 
(taking $m=1$),  $[(I_C + (L))/(L)]_t$ is the degree $t$ component of a line in $K[x,y,z]$. 
Thus since $I_C$ is saturated, $[I_C]_t$ is the degree $t$ component of a plane, i.e. $C$ 
is a plane curve of degree $d$. This is impossible since $C$ is non-degenerate.

The same argument applies for all degrees $3 \leq t \leq d-2$: we must have $\Delta h_C(t) \geq t+1$ 
in order to reach $d$, and if we have equality then $C$ must be a plane curve. Thus the stated Hilbert function is the smallest possible.
\end{proof}

 If Conjecture \ref{conj about existence} is true, the following kind of result is the best that one can 
 hope for, in terms of finding a Hilbert function that forces unexpectedness (but we do 
 not claim that this result is optimal in any way). It says that for a certain class of Hilbert 
 functions (which we define specifically via some numerical conditions) for which a curve 
 is forced in some component of the ideal because of maximal growth,  if you {\em assume} 
 that this curve is not a plane curve, then {\em any} set of points with this Hilbert function 
 must admit unexpected cones. Conjecture \ref{conj about existence} thus implies that if, however, you allow the curve 
 to be a plane curve then a set of points can be found for which there is no unexpected cone.  

\begin{theorem}
 \label{force unexp}
Let $X$ be a set of points in $\PP^3$ with $h$-vector
\[
(1,3,a_2, a_3, \dots, a_k, \underbrace{d, d , \dots, d}_\ell),
\]
where $k \geq 2$ and $a_k > d$.  Assume
\[
2 \leq d \leq \min \{ k+1, \ell \}.
\]
In case $d \geq 4$, let  
\[
b_i = 
\left \{
\begin{array}{lll}
a_i - (i+2) & \hbox{for $2 \leq i \leq d-2$} \\
a_i - d & \hbox{for $d-1 \leq i \leq k$}. \\
\end{array}
\right.
\]
If $d = 2$ or $d=3$, we replace $i+2$ by the bounds given in the first two parts of Lemma \ref{bound X2}.

Set 
\[
N = \sum_{i=2}^k b_i 
\]
and
\[
m = \min \left \{ i \ | \ \binom{i+2}{2} >~N \right \}. 
\]
We also assume  $m \leq k$.

Let $C$ be the equidimensional curve of degree $d$ guaranteed by Proposition~\ref{BGM thm}.  
If $C$ is not a plane curve then $X$ admits an unexpected cone of degree $d+k$.

\end{theorem}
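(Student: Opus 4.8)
The idea is to peel the forced curve $C$ off of $X$ and reduce the statement to the computation in Theorem~\ref{t.C U X}, carried out in degree $d+k$. First I would apply Proposition~\ref{BGM thm}, which applies because the $h$-vector equals $d$ in degrees $k+1$ and $k+2$ (using $\ell\geq 2$) and $d\leq k+1$. It produces a reduced curve $V$ with $I_V=\langle[I_X]_{\leq k+1}\rangle$, a disjoint decomposition $X=X_1\sqcup X_2$ in which $X_1$ is the set of points of $X$ lying on the unmixed one-dimensional part $C$ of $V$ and $V=C\cup X_2$, together with the relations $h_{X_1}(t)=h_X(t)-|X_2|$ for $t\geq k$ and $\Delta h_{X_1}(t)=\Delta h_C(t)$ for $t\leq k+2$. (If $X_2=\emptyset$ then $V=C$ and one argues below with Theorem~\ref{uct} in place of Theorem~\ref{t.C U X}.)

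Summing first differences yields $h_{X_1}(k)=h_C(k)$, hence $h_X(k)=h_C(k)+|X_2|$ and $|X_2|=\sum_{i=2}^{k}(a_i-\Delta h_C(i))$. Since $C$ is reduced, unmixed, and non-degenerate (it is not a plane curve), Lemma~\ref{bound X2} bounds each $\Delta h_C(i)$ from below by exactly the amount subtracted in the definition of $b_i$, so $|X_2|\leq N$; using $m\leq k$ this gives $|X_2|<\binom{m+2}{2}\leq\binom{k+2}{2}$. I would also observe that $X_2$ imposes independent conditions on forms of degree $\geq k$: since $X_1\sqcup X_2$ is a disjoint union of reduced sets we have $I_X=I_{X_1}\cap I_{X_2}$, so $h_X(t)\leq h_{X_1}(t)+h_{X_2}(t)\leq h_{X_1}(t)+|X_2|$, and comparing with $h_X(t)=h_{X_1}(t)+|X_2|$ for $t\geq k$ forces $h_{X_2}(t)=|X_2|$ for all $t\geq k$.

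The crux is the identity $[I_X]_{d+k}=[I_V]_{d+k}$. Since $X\subseteq V$, this reduces to $h_X(d+k)=h_V(d+k)$. Because $d\leq\ell$, the $h$-vector still equals $d$ through degree $d+k$, and because $\Delta h_C(i)=d$ for $i\geq d-1$ while $k+1\geq d$, both $h_X(d+k)-h_X(k)$ and $h_C(d+k)-h_C(k)$ equal $d^2$; combining with $h_X(k)=h_C(k)+|X_2|$ gives $h_X(d+k)=h_C(d+k)+|X_2|$. On the other side, $X_2$ imposes independent conditions in degree $k$ and $C$ imposes independent conditions in degree $d$ (by \cite{GLP}, as in the proof of Theorem~\ref{t.C U X}), so a standard argument shows $X_2$ imposes independent conditions on $[I_C]_{d+k}$, whence $h_V(d+k)=h_{C\cup X_2}(d+k)=h_C(d+k)+|X_2|$. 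The two values agree, which proves the claim; consequently $\adim(X,d+k,d+k)=\adim(V,d+k,d+k)$ and $AV_{X,0}(d+k)=AV_{V,0}(d+k)$.

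It remains to run the computation in the proof of Theorem~\ref{t.C U X} with $(X,e,t)$ there replaced by $(X_2,d,k)$; its hypotheses hold since $X_2$ imposes independent conditions in degree $k$, $|X_2|<\binom{k+2}{2}$, and $C$ imposes independent conditions in degree $d$ (the minimality of $t$ in that statement is never used in the argument). This gives
\[
AV_{X,0}(d+k)=AV_{V,0}(d+k)=AV_{X_2,0}(k)+\left[\binom{d-1}{2}-g\right]\;\geq\;\binom{d-1}{2}-g\;>\;0,
\]
the strict inequality because $C$ is not a plane curve, so $g<\binom{d-1}{2}$ (Theorem~\ref{uct}). For the actual dimension, let $P$ be a general point: the projection $\overline{X_2}$ of $X_2$ from $P$ is a set of $|X_2|<\binom{m+2}{2}$ points of $\PP^2$, hence lies on a plane curve, which (using $m\leq k$) can be chosen of degree exactly $k$; the union of the cone over $C$ (degree $d$, multiplicity $d$ at $P$) and the cone over that plane curve (degree $k$, multiplicity $k$ at $P$) is a hypersurface of degree $d+k$ with multiplicity $d+k$ at $P$ containing $X_1\cup X_2=X$, so $\adim(X,d+k,d+k)\geq 1$. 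Thus $\adim(X,d+k,d+k)>0$ and $\adim(X,d+k,d+k)>\vdim(X,d+k,d+k)$, and by Remark~\ref{equiv to unexp}, $X$ admits an unexpected cone of degree $d+k$. The step I expect to be the main obstacle is the identity $[I_X]_{d+k}=[I_V]_{d+k}$: it rests on the bound $|X_2|\leq N$ (the delicate bookkeeping with Lemma~\ref{bound X2}), on $h_X(k)=h_C(k)+|X_2|$, and on keeping track of exactly which independent-conditions statements hold in degrees $k$, $d$, and $d+k$, together with checking that the computation in the proof of Theorem~\ref{t.C U X} really does go through with $t=k$.
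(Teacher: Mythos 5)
Your proposal is correct and follows essentially the same route as the paper: the BGM decomposition $X=X_1\cup X_2$ with $|X_2|\leq N$ via Lemma \ref{bound X2}, the application of Theorem \ref{t.C U X} (with $t=k$, $e=d$) to get $AV>0$, the explicit cone of degree $d+k$ (cone over $C$ plus a degree-$k$ cone over $\pi_P(X_2)$) to get $\adim>0$, and the transfer to $X$ via $[I_X]_{d+k}=[I_{C\cup X_2}]_{d+k}$. The only difference is cosmetic: you verify that last equality by a direct Hilbert-function count (and note that minimality of $t$ in Theorem \ref{t.C U X} is not needed), details the paper leaves implicit by citing the maximal-growth range $k+1\leq d+k\leq k+\ell$ from \cite{BGM}.
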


\begin{proof}
Since  $d \leq k+1$, Proposition \ref{BGM thm} applies in degree $k$.
In particular, we get from Proposition \ref{BGM thm} (c) that $X_2$ 
imposes independent conditions on $[I_X]_s$ for any $s \geq k$, hence it also imposes 
independent conditions on the complete linear system of forms of degree $s$; we will use 
the case $s=k$. 

Now we look in degree $d+k$. By Theorem \ref{t.C U X} we have 
\[
AV_{C \cup X_2} (d+k) = AV_{X_2,0}(k) + \left [ \binom{d-1}{2} - g \right ] > 0 
\]
since $C$ is not a plane curve.

Note that $N$ is an upper bound for $|X_2|$, thanks to Lemma \ref{bound X2}. If we 
denote by $\pi_P$ the projection from a general point $P$ to a general plane, the assumption $m \leq k$
guarantees that $\pi_P(X_2)$ lies on a curve of degree $k$. This means that $X_2$ lies on a 
cone of degree $k$ with vertex at $P$. If $S_P$ is the cone over $C$ with vertex $P$, the 
union of these cones is a surface of degree $d+k$ with multiplicity $d+k$ at $P$. 
Thus ${\adim} (C \cup X_2,d+k,d+k) > 0$, so we have an unexpected cone of degree $d+k$ for 
$C \cup X_2$. 
But $k+1 \leq d+k \leq k+\ell$ so  $[I_X]_{d+k} = [I_{C \cup X_2}]_{d+k}$, 
so also $X$ admits an unexpected cone of degree $d+k$.
\end{proof}

\begin{example}
As mentioned above, the preceding result is not meant to be optimal. 
Consider for instance the $h$-vector $(1,3,6,5,3,3,3)$. We have $d=3$, $\ell = 3$, $N = 3 + 2 = 5$, $m = 2$, $k = 3$. 
The cubic curve $C$ guaranteed in the base locus of $[I_X]_t$ for $t = 4,5,6$ is either a twisted cubic, 
the union of a line and a conic (meeting in 0 or 1 points), or the union of three lines (meeting in a total of $<3$ points). 
Considering possible Hilbert functions of such curves, the given $h$-vector forces a set $X_2$ of at most $N = 5$ points off the curve (as a result of Lemma~\ref{bound X2}).

The theorem guarantees an unexpected cone of degree 6. Indeed, 
\[
\dim [I_X]_6 - \binom{6-1+3}{3} = 60 - 56 = 4
\]
and since the projection of $\leq 5$ points to $\PP^2$ lies on  at least a 5-dimensional vector space of plane cubics, the cones over these cubics (with vertex at the general point $P$) together with the cone over $C$ confirm the conclusion that there is an unexpected sextic. 

However, these projected points also lie on at least one conic, so there is a quadric cone containing $X_2$, and together with the cone over $C$ we get  an unexpected quintic cone (since $\dim [I_X]_5 - \binom{5-1+3}{3} = 0$), which is not covered by the theorem. 

If we had allowed $C$ to be a plane cubic curve, Lemma \ref{bound X2} would no longer hold: the lower bound  in this case would be given by the sequence $(1,2,3,3,3 ,\dots)$ so $|X_2|$ could also be 6.
\end{example}

The following result gives a geometric property for a set of points that is enough to find $h$-vectors that force unexpected cones.

\begin{corollary}
\label{LGP}
Let $X$ be a set of points in $\PP^3$ in linear general position, and assume 
that $X$ has $h$-vector given by the numerical conditions in Theorem \ref{force unexp}. Then $X$ admits an unexpected cone.
\end{corollary}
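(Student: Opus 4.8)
The plan is to deduce Corollary \ref{LGP} directly from Theorem \ref{force unexp}. That theorem already does essentially all the work: under the numerical conditions assumed here, its one remaining hypothesis is that the curve $C$ of degree $d$ produced by Proposition \ref{BGM thm} is \emph{not} a plane curve. So the entire task reduces to showing that, when $X$ is in linear general position, this curve $C$ cannot be contained in a plane; everything else is just a matter of checking that the numerical conditions of Corollary \ref{LGP} really do put us in a position to invoke both Proposition \ref{BGM thm} and Theorem \ref{force unexp}.

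First I would record the decomposition coming from Proposition \ref{BGM thm}. Since $2 \leq d \leq \min\{k+1,\ell\}$, the $h$-vector of $X$ has the shape required by Proposition \ref{BGM thm} (the value $d$ is repeated at least twice, beginning in degree $k+1$, and $d \leq k+1$), so that proposition applies in degree $k$ and produces a reduced equidimensional curve $C$ of degree $d$, a subset $X_1 \subseteq X$ lying on $C$, and a complementary subset $X_2 = X \setminus X_1$. Next I would show that $X_1$ contains at least four points. By parts (c) and (d) of Proposition \ref{BGM thm}, $h_{X_1}(t) = h_X(t) - |X_2|$ for $t \geq k$, and $\Delta h_{X_1}(t) = \Delta h_X(t) = d$ for $k+1 \leq t \leq k+\ell$ (and $=0$ afterwards, since $h_X$, hence $h_{X_1}$, has stabilized by degree $k+\ell$). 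Telescoping the first difference then gives
\[
|X_1| \;=\; h_{X_1}(k+\ell) \;=\; h_{X_1}(k) + \sum_{t=k+1}^{k+\ell} \Delta h_{X_1}(t) \;\geq\; 1 + \ell d \;\geq\; 1 + d^2 \;\geq\; 5,
\]
using $h_{X_1}(k)\geq 1$ and $\ell \geq d \geq 2$.

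With this bound the conclusion is immediate: if $C$ were a plane curve, say $C \subseteq H$ for a plane $H \subseteq \PP^3$, then $X_1 \subseteq C \subseteq H$ would exhibit at least four points of $X$ on a common plane, contradicting that $X$ is in linear general position. Hence $C$ is not a plane curve, and Theorem \ref{force unexp} applies verbatim, giving that $X$ admits an unexpected cone (of degree $d+k$).

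The argument has no genuinely hard step; the only things needing care are the Hilbert-function bookkeeping that produces $|X_1|\geq 4$ and the verification that the hypotheses of Proposition \ref{BGM thm} and Theorem \ref{force unexp} are in force. It is also worth noting that for curves in $\PP^3$ ``not a plane curve'' is the same as ``non-degenerate'', so no separate check of non-degeneracy of $C$ is required beyond what has just been established.
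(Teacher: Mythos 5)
Your proof is correct and takes essentially the same route as the paper, whose entire proof is the observation that linear general position forces the curve $C$ of Theorem \ref{force unexp} to be non-degenerate. Your Hilbert-function bookkeeping (via Proposition \ref{BGM thm}(c),(d), giving $|X_1| \geq \ell d \geq 4$ points of $X$ on $C$, so a planar $C$ would put four points of $X$ in a plane) simply supplies the detail that the paper leaves implicit.
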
 

\begin{proof}
The assumption of linear general position forces $C$ to be non-degenerate.
\end{proof}


\end{document}